\newtheorem{thm}{Theorem}
\newtheorem{rem}[thm]{Remark}
\newtheorem{corol}[thm]{Corollary}
\newtheorem{lem}[thm]{Lemma}
\newtheorem{prop}[thm]{Proposition}
\newtheorem{expl}[thm]{Example}
\newenvironment{assump}{\textbf{Assumption} \itshape}
\numberwithin{thm}{section}
\numberwithin{equation}{section}
\DeclareMathOperator{\E}{E}
\DeclareMathOperator{\R}{{\mathbb R}}
\DeclareMathOperator{\C}{{\mathbb C}}
\DeclareMathOperator{\Z}{{\mathbb Z}}
\DeclareMathOperator{\N}{{\mathbb N}}
\DeclareMathOperator{\Nz}{{\mathbb N} \cup \{0\}}
\DeclareMathOperator{\G}{{\mathcal G}}
\DeclareMathOperator{\J}{{\mathcal J}}
\DeclareMathOperator{\supp}{supp}
\renewcommand{\S}{\mathcal{S}}
\DeclareMathOperator{\twopi}{\mbox{$-$}}
\DeclareMathOperator{\twopii}{\mbox{$-$}\imath}
\providecommand{\Ree}[1]{\text{Re}\left(#1\right)}
\providecommand{\Imm}[1]{\text{Im}\left(#1\right)}
\renewcommand{\L}[1]{\textit{L}^{#1}\!\left({\mathbb R}\right)}
\DeclareMathOperator{\Lp}{\L{\it p}}
\DeclareMathOperator{\Lone}{\L{1}}
\DeclareMathOperator{\Ltwo}{\L{2}}
\DeclareMathOperator{\Linf}{\L{\infty}}
\DeclareMathOperator{\Pn}{\mathit{P}}
\DeclareMathOperator{\Pnhat}{\mathit{\widehat{P\,}_{\!\!n}}}
\DeclareMathOperator{\varphin}{\varphi}
\DeclareMathOperator{\varphintilde}{\widetilde{\varphi}}
\DeclareMathOperator{\varphinhat}{\widehat{\varphi}_{\mathit n}}
\DeclareMathOperator{\varphinhattilde}{\widetilde{\varphi}_{\mathit n}}
\DeclareMathOperator{\lambdanhat}{\hat{\lambda}_{\mathit n}}
\DeclareMathOperator{\setcomplement}{\mbox{\tiny \textit{C}}}
\DeclareMathOperator{\Tlambda}{\mathit{T}_{\lambda}}
\DeclareMathOperator{\Tlambdaunderline}{\mathit{T}_{ \underline{\lambda} \!}}
\DeclareMathOperator{\Xin}{2^{\mathit{J}}\Xi}
\DeclareMathOperator{\Un}{\mathit{U}_{\mathit{T}_\lambda}}
\DeclareMathOperator{\Hn}{\mathit{H}_{\mathit{T}_\lambda}}
\DeclareMathOperator{\UT}{\mathit{U}_{\mathit{T}}}
\DeclareMathOperator{\JTstar}{{\mathit{J}_{\Tlambda}^{*}}}
\DeclareMathOperator{\JThat}{{ \mathit{\hat{J}}_{\mathit T}}}
\DeclareMathOperator{\Jmax}{{\mathit{J}_{\mathit max}}}
\DeclareMathOperator{\twoJn}{2^{\mathit{J}_{\Tlambda}}}
\DeclareMathOperator{\twoJmax}{2^{\Jmax}}
\DeclareMathOperator{\twoJTstar}{2^{\JTstar}}
\providecommand{\twop}[1]{\left[2^{#1}\right]_{p}}
\providecommand{\ZJ}[1]{\mathit{Z}^{\mathit{J})}_{#1}}
\DeclareMathOperator{\varphinJ}{\varphin^{\mathit{J})}}
\providecommand{\varphinhatJ}[1]{\widehat\varphin^{\mathit{J})}_{#1}}
\DeclareMathOperator{\lambdaDn}{\lambda \Dn}
\DeclareMathOperator{\lambdanhatDn}{\lambdanhat \Dn}
\DeclareMathOperator{\lambdasupnDn}{\lambda \supnDn}
\DeclareMathOperator{\lambdabarsupnDn}{\bar \lambda \supnDn}
\DeclareMathOperator{\nunhat}{\hat \nu_{\Tlambda}}
\DeclareMathOperator{\nuThat}{\hat \nu_{\mathit T}}
\providecommand{\rightbar}[3]{\left. #1 \right|_{#2}^{#3}}
\providecommand{\rightbarHU}[1]{\rightbar{#1}{\mathit{H}}{\mathit{U}}}
\providecommand{\rightbarH}[1]{\rightbar{#1}{\mathit{H}}{}}
\providecommand{\rightbarU}[1]{\rightbar{#1}{}{U}}
\DeclareMathOperator{\Pntilde}{\mathit{\widetilde{P\,}}}
\DeclareMathOperator{\Pnhattilde}{\mathit{\widetilde{P\,}_{\!\!n}}}
\DeclareMathOperator{\Omegan}{\Omega_{\mathit{n}}}
\DeclareMathOperator{\Omegantilde}{\widetilde{\Omega}_{\mathit n}}
\DeclareMathOperator{\Omegantildecomplement}{\widetilde{\Omega}_{\mathit n}^{\setcomplement}}
\providecommand{\Prunder}[1]{\mathrm{Pr}_{\,#1}}
\DeclareMathOperator{\PrOmegantilde}{\Prunder{\Omegantilde\left( 2^j,r\right)}}
\DeclareMathOperator{\tildeK}{\mbox{{\tiny \,}}\widetilde{\mbox{{\tiny \!}}\mathit{K}{{ \,}}}\!}
\providecommand{\tildeKJ}[1]{\tildeK_J\left(#1\right)}
\providecommand{\KJ}[1]{K_J\left(#1\right)}
\DeclareMathOperator{\barK}{\mbox{{\tiny \,}}\overline{\mbox{{\tiny \!}}\mathit{K}\mbox{{\tiny \,}}}\mbox{{\tiny \!}}}
\DeclareMathOperator{\linspan}{span}
\DeclareMathOperator{\one}{{\mathds 1}}
\DeclareMathOperator{\onez}{{\mathds 1}_{\{0\}}}
\DeclareMathOperator{\Dn}{{\Delta_{\mathit{n}}}}
\DeclareMathOperator{\supnDn}{\bar{\Delta}}
\providecommand{\Dnsuper}[1]{\Delta_{\mathit{n}}^{#1}}
\DeclareMathOperator{\Dnn}{{\Delta_{\mathit{n}}} \mathit{n}}
\DeclareMathOperator{\Xs}{\mathit{X}_1, \mathit{X}_2, \ldots}
\DeclareMathOperator{\Zs}{\mathit{Z}_1, \ldots, \mathit{Z}_{\mathit{n}}}
\providecommand{\sump}[3]{\sum_{i=#1}^{\infty} \frac{\Delta_{#3}^{i} \nu_{#2}^{\ast i}}{i!}}
\providecommand{\abs}[1]{\left| #1 \right|}
\providecommand{\norm}[1]{\left\| #1 \right\|}
\renewcommand{\le}{\leqslant}
\DeclareMathOperator{\Log}{Log}
\providecommand{\qsub}{  \!\!\!\!\!\!\!\! }
\providecommand{\qqsub}{ \qsub \qsub \qsub \qsub }
\providecommand{\FT}{{\mathcal F}}
\providecommand{\FTT}[1]{{\mathcal F} \left[ #1\right]}
\providecommand{\FI}{{\mathcal F}^{-1}}
\providecommand{\FII}[1]{{\mathcal F}^{-1} \left[ #1 \right]}
\begin{document}

	\newenvironment{cond}{\mbox{ }\\ \noindent \textbf{Condition} \itshape}

\title{Adaptive nonparametric estimation for compound Poisson processes robust to the discrete-observation scheme
}


\author{Alberto J. Coca \footnote{The author is grateful to Mathias Trabs for an invitation to Hamburg, where the presentation of the results herein was greatly improved, and to Richard Nickl for repeated feedback on this manuscript. This project has received funding from the Engineering and Physical Sciences Research Council (EPSRC) under grant LFAG/141 (RG71269), and the European Research Council (ERC), 
		grant agreement No 647812.
	} \\
\\
\textit{University of Cambridge\footnote{Statistical Laboratory, Department of
Pure Mathematics and Mathematical Statistics, University of Cambridge, CB3 0WB,
Cambridge, United Kingdom. Email: alberto.j.coca@statslab.cam.ac.uk}}\\
\date{\today}}


\maketitle

\begin{abstract}
	A compound Poisson process whose jump measure and intensity are unknown is observed at finitely many equispaced times. We construct a purely data-driven 
	estimator of the L\'evy density $\nu$ 
	through the spectral approach using general Calderon--Zygmund integral operators, which include convolution and projection kernels. Assuming minimal tail assumptions, it is shown to estimate $\nu$ at the minimax 
	rate of estimation 
	over Besov balls under the losses $L^p(\mathbb{R})$, $p\in[1,\infty]$, and robustly to the observation regime (high- and low-frequency). To achieve adaptation in a minimax sense, we use Lepski\u{i}'s method as it is particularly well-suited for our generality.  
	Thus, novel exponential-concentration inequalities are proved including one for the uniform fluctuations of the empirical characteristic function. These are of independent interest, as are the proof-strategies employed to deal with general Calderon--Zygmund operators, to depart from the ubiquitous quadratic structure and to show  robustness 
	without polynomial-tail 
	conditions. 
	Part of the motivation for such generality is a new insight we include here too that, furthermore, allows us to unify the main two approaches to construct estimators used in related literature. 
\end{abstract}

\noindent\textit{Key words: adaptive nonparametric estimation, Calderon--Zygmund integral operator, compound Poisson process, Lepski\u{i}'s method, L\'evy density, low-, high- and arbitrary frequency regimes, non-linear inverse problem, robustness to observation schemes}

\noindent\textit{MSC 2010 subject classification}: Primary: 62G07; 
Secondary: 60F25, 
60G51, 62G20, 
62M05.

\section{Introduction}
\label{sec:introduction}

\subsection{The statistical non-linear inverse problem}
\label{sec:intro:snip}

Let $\Xs$ 
be a sequence of independent and identically distributed (i.i.d.) real-valued random variables with common law $\mu$ and let $N\!:=\!(N_t)_{t\geq 0}$ be 
an independent Poisson process on $\R$ with intensity $\lambda\!>\!0$. Then, $Y\!:=\!(Y_t)_{t\geq 0}$ with $Y_t\!=\!\sum_{i=1}^{N_t} X_i,  t\!\geq \!0,$
is a compound Poisson process 
with (finite) L{\'e}vy measure $\nu\!:=\!\lambda \mu$; we take the convention that an empty sum is zero so, in particular, the process 
starts at zero. We further assume that $\mu(\{0\})\!=\!0$, so that the model is fully identifiable through $\nu$ and 
$Y$ is a L{\'e}vy process. 

Compound Poisson processes are the building blocks of L{\'e}vy processes and are used in many applications within natural sciences, engineering and economics (cf. 
\cite{BNMR12} and \cite{CT04}, and references therein). In many of these, the underlying process is not perfectly observed but only discrete observations are available: e.g.,  $Y_{\Dn}, Y_{2\Dn}, ..., Y_{n\Dn}$ for some $\Dn\!>\!0$ and $n\!\in\! \N$. In view of the expression for $Y_t$ above
, the process may jump several times between two observations and we are effectively observing a random variable corrupted by a sum of a random number of copies of itself. More specifically, due to $Y$ being a L{\'e}vy process, the increments $Z_{j}:=Y_{j\Dn}- Y_{(j-1)\Dn}$, $j=1, \ldots,n$, are i.i.d. copies of $Y_{\Dn}$ with distribution
\vspace{-0.05cm}
\begin{equation}\label{eq:repPintro}
\Pn:=e^{-\lambdaDn} \sump{0}{}{n}, 
\vspace{-0.2cm}
\end{equation}
where $\ast$ denotes convolution, $\nu^{\ast i}\!=\!\overbrace{\nu \ast ... \ast \nu}^i$,  $i\geq 1$, and $ \nu^{\ast 0}\!=\!\delta_{0}$ is Dirac's delta at 0, so we are equivalently observing a sample from $\Pn\!=\!\Pn(\Dn, \nu)$. Consequently, estimating $\nu$ from discrete observations is a non-linear statistical inverse problem and, after linearisation, it is of deconvolution type with the role of the (unknown) error played by the increments themselves. Indeed, Nickl and Rei{\ss} \cite{NR12} call it an auto-deconvolution problem following the work of Neumann and Rei{\ss} in \cite{NR09}, where the ill-posedness of the problem for general L{\'e}vy processes was first studied. In the context of compound Poisson processes, the inverse problem is not ill-posed in terms of convergence rates because, denoting the Fourier transform of a finite measure by $\FT$, the characteristic function of $Y_{\Dn}$ satisfies
\begin{equation}\label{eq:repvarphiintro}
\varphi(\xi):=\FT \!\Pn(\xi) =e^{\Dn \left( \FT \nu (\xi) - \lambda \right)}, \qquad \xi \in \R,
\end{equation}
and, noting that $\abs{ \FT \nu(\xi)} \leq \lambda$ for all $\xi \in \R$, it satisfies $\inf_{\xi\in \R} |\varphi(\xi) | \geq e^{-2 \lambdaDn}>0$.

\subsection{Our result and related literature}
\label{sec:intro:resultandliterature}

Nonparametric inference for discretely observed L\'evy processes has received much attention in the last two decades. Roughly speaking, the literature can be split into two main blocks depending on the observation regime: the high-frequency scheme, where $\Dn \to 0$ and $T:=\Dnn\to \infty$ as $n\to \infty$ and any rates of convergence are given in terms of $T$; and the low-frequency regime, where $\Dn=\Delta>0$ 
is fixed and rates are in terms of $n$. In the former, part or all of the ill-posedness of the inverse problem vanishes asymptotically and estimators are built in two different ways: directly from small-time expansions such as that obtained by taking the first summand in \eqref{eq:repPintro} (see \cite{FL08,FLH09} for the expansions and \cite{FL09,FL11,KP16} and \cite{NRST16}, Section 2.2, for estimation); and, from \eqref{eq:repvarphiintro} and its generalisation 
given by the L{\'e}vy--Khintchine formula together with small-time approximations that simplify it (see \cite{BL15,CDGC14,CGC09,CGC11,D13,DM17}). Both have the drawback that $\Dn\to 0$ sufficiently fast is needed. In the low-frequency regime, the ill-posedness has to be dealt with in full and the 
route to construct estimators is the so-called spectral approach, initiated by Belomestny and Rei{\ss} \cite{BR06}, which uses the L{\'e}vy--Khintchine representation without any approximations (see \cite{BG03,CDH10,C18,CGC10,DK18,VEGS07,G09,G12,K14,NR09,NR12,T15}). The exceptions to this division are \cite{NRST16}, Section 2.3, and \cite{KK17}, where the spectral approach is used in the high-frequency regime with $\Dn\to 0$ sufficiently fast
, and \cite{KR10}, which we mention in more detail below. See \cite{BCGCMR15} for a comprehensive account of the existing results in this field.

However, in practice only finite samples are available and the statistician must choose an estimator constructed from one of the approaches above. 
Hence, estimators that can optimally deal with both regimes at once are of key importance and this is a recent topic of investigation in related problems too such as in estimation for discretely observed diffusions (cf. Chorowski \cite{Cho18} and Abraham \cite{A18}). In Chapter 2 in Coca \cite{C17}, we argued that, in the compound Poisson setting, the spectral route implicitly uses the small-time approximations considered in Duval \cite{D13}, Comte, Duval and Genon-Catalot \cite{CDGC14} and Duval and Mariucci \cite{DM17}, with the difference of being of arbitrary high-order in $\Dn$. Thus, it should provide estimators that deal with the high-frequency regime with no assumptions on how fast $\Dn \to 0$, to the extent that they should also be able to deal with the low-frequency regime. Indeed,   without knowledge of $\lambda$ and using general Calderon--Zygmund integral operators that include convolution and projection kernels such as wavelets, 
herein we build such robust estimator, $\hat \nu $, that, in addition, is adaptive in a minimax sense to an assumed Besov
-regularity of $\nu$ for all the losses $\Lp$, $p\in[1,\infty]$, under minimal assumptions; in particular, in the statistically-relevant case 
$p=\infty$, and also when $p\in[2,\infty)$, we only require a logarithmic-tail assumption. For adaptation we use Lepski\u{i}'s method as it is particularly well-suited for such generality. Furthermore, in Section \ref{sec:unification} we use this insight and our general estimator to unify the 
abovementioned approaches, which have so far been regarded as different. We note in passing that the rates of convergence we obtain will not be given in terms of $n$ but in terms of the natural quantity $\Tlambda:=\lambda T=\lambda \Dnn$, i.e. the expected number of jumps that $Y$ gives in the whole observation interval $[0,T]$. 

For more general L{\'e}vy processes observed discretely, the insight carries over and estimators constructed from the spectral approach are minimax-optimal and robust to the observation interval $\Dn$: 
this was first reported in Section 4.5 in Comte and Genon-Catalot \cite{CGC10} and in Kappus and Rei{\ss} \cite{KR10}, where the L{\'e}vy density and triplet are estimated under the $\Ltwo$ loss and a minimum distance criterion, respectively; and, it was subsequently remarked in the discussion at the end of Section 3 in Kappus \cite{K14} and in Remark 6 in Section 3 in Trabs \cite{T15}, where they  
estimate functionals of and the L{\'e}vy density under the losses $|\cdot|^2$ and $L^{\infty}(D)$, $D\!\subset \!\R\setminus\{0\}$ compact, respectively. Nevertheless, only 
\cite{CGC10} in Section 4.6 and 
\cite{K14} in Section 4 include adaptation procedures for their estimators and these are not proved 
to be robust to the discrete observation regime. Moreover, they are the only works in the literature in which adaptive estimation of L{\'e}vy densities from low-frequency observations is studied, and they do so for quadratic-type losses and under polynomial-tail assumptions.  
%
%
%
%
%
Therefore, our results fill in these three gaps and for general operators, starting with the simplest pure jump L{\'e}vy model. 

Right before completing this manuscript, the work of Duval and Kappus \cite{DK18} was released: 
they construct a convolution kernel-estimator of $\nu$ from the spectral approach that is adaptive and robust to arbitrary frequencies. Yet, 
they 
work  under the $\Ltwo$ loss with a fourth-moment condition and their rates are optimal up to logarithms 
assuming knowledge of $\lambda$. Due to these differences, our proof-strategies differ significantly too. 

Lastly, we must mention that a third block of literature considers nonparametric estimation from discrete but irregular or random observations (see \cite{K18,B11,CDGCK15}). 
Additionally, an alternative to the aforementioned purely frequentist approaches, the Bayesian approach, has recently been applied to make inference on $\nu$ or functionals of it: in a periodic setting and for low-frequency observations, 
Nickl and S{\"o}hl \cite{NS17} prove a Bernstein--von Mises theorem for its distribution, and non-adaptive minimax-optimal contraction rates of the Bayesian posterior under the supremum-norm follow. Previously, Gugushvili, van der Meulen and Spreij \cite{GvdMS15,GvdMS18} built a posterior for the forward operator in \eqref{eq:repPintro} that adaptively contracts at the optimal rate for the Hellinger distance on $\R$ for the low- and high-frequency regimes, respectively; in \cite{GvdMS18}, they report robustness to the observation interval of their approach. The concentration results developed in our proofs allow to construct a posterior for $\nu$ that contracts at the optimal rate robustly to the observation scheme for $L^p(D)$, $p\in[1,2]$ and $D\subset \R$ compact, using the constructive-testing approach introduced in Gin{\'e} and Nickl \cite{GN11}. Yet, this is out of the scope of the current work and, furthermore, it requires knowledge of the regularity of $\nu$, so the problem of finding a Bayesian posterior for $\nu$ that is adaptive on $\R$ remains open. 

\subsection{Adaptive and robust estimation in $\Lp$, $p\in[1,\infty]$}
\label{sec:intro:techinicalities}

To achieve the generality of our results 
in terms of loss functions and robustness under minimal assumptions, the routes taken in our proofs partly depart from those in existing literature on nonparametric 
estimation for discretely observed L\'evy processes. 
We now  discuss these differences, as they 
provide and constitute contributions of our work too.

In view of \eqref{eq:repvarphiintro}, the highly non-linear formal representation of $\nu$ in terms of $\Pn$ given by $\nu=\frac{1}{\Dn}\FII{\Log( e^{\lambdaDn}\varphi)}$ follows. 
Then, if  $\nu$ or functionals of it are estimated under an $L^2$ loss, the Parseval--Plancherel isometry can be readily used to work in the Fourier domain and the analysis simplifies. This strategy remains valid in estimation of 
more general L{\'e}vy processes and, indeed, it is used in most of the existing literature using the spectral approach. 
In particular, estimators are built by plugging the empirical counterparts of $\varphin$ and derivatives of it, namely
\begin{equation}\label{eq:varphinhat}
\varphinhat^{\!\!k)}(\xi) \!:= \!\frac{d^k}{d \xi^k} \left( \Pnhat e^{\twopii \xi \cdot}\right)\!:=\! \frac{1}{n} \sum_{j=1}^n (\twopii Z_j)^k e^{\twopii \xi Z_j},  \,\,\, \mbox{with } \,  k\geq 0 \, \mbox{ and}\, \Pnhat\!:=\!\frac{1}{n}\sum_{j=1}^{n}\delta_{Z_j},
\end{equation}
and the variance term (comprising linear and non-linear terms) can be dealt with by controlling $\E\left[(  \varphinhat-\varphin)^{k)}\right]^2$ 
pointwise. Model selection techniques can then be applied and this is the route is taken by the few works  
concerning adaptive estimation 
(cf. Section 3.2 in 
\cite{DK18}, 
Section 4.6 in \cite{CGC10} and Section 4 in \cite{K14}). 
However, $L^2$-convergence properties of Fourier transforms may not be informative about convergence in other $L^p$-norms, especially about the statistically significant $L^{\infty}$-norm; the only work dealing with $p=\infty$ that we are aware of is Trabs \cite{T15}, Section 3, where he deals with general L{\'e}vy processes, but his result is non-adaptive and restricted to a compact interval: the latter ultimately allows him to deal with the variance in the way 
we just described. 

To deal with the $\Lp$ loss for $p\in [1,\infty]$ without assuming $\nu\in \Ltwo$ necessarily, our analysis needs to be finer. 
In line with existing distributional results (cf. \cite{VEGS07,NR12,NRST16} and \cite{C18}), we work on the linear term to write it as an empirical process $\left( \Pnhat-\Pn\right)f$ with $f$  uniformly bounded,
and control the expectation of its  $\Lp$-norm by appropriately applying results from i.i.d. density estimation under general conditions on Calderon--Zygmund integral operators. This more precise treatment allows us to show concentration of its $\Lp$-norm too using Talagrand's inequality in the spirit of \cite{GN08,GN09,KNP12}, which is one of the main ingredients to apply (their modification of) Lepski\u{i}'s method to achieve adaptation; this method was originally introduced in Lepski\u{i} \cite{L91} and has only been applied in related literature by Trabs \cite{T15}, Section 4, in the abovementioned regression-type 
setting simpler than 
discrete observations. Similarly to Neumann and Rei{\ss} \cite{NR09}, where non-adaptive estimation of functionals was first studied, we reduce the problem of controlling the non-linear stochastic term to controlling $\E\sup_{\xi\in[-\Xi_n,\Xi_n]} | \varphinhat-\varphi|(\xi)$, $\Xi_n\!\to\! \infty$: we show this directly for $p\!\in\!\{1,2,\infty\}$ and use the Riesz--Thorin interpolation theorem to extend it to intermediate values and profit from the favourable $\Ltwo$ geometry. For adaptivity, we  use Talagrand's inequality again to show 
concentration of the uniform fluctuations of $\varphinhat$. 
This new result implies that, for $p\!\in\!\{1,\infty\}$, the non-linear term does not show exponential concentration but polynomial (of arbitrary high order) and, thus, using Talagrand's inequality is crucial to apply Lepski\u{i}'s method. In particular, for $p\!=\!1$ it  dominates over the linear term, 
a phenomenon  not yet  reported in related literature. 

The other main contribution of our work is that, remarkably, this finer analysis can be made robust to the observation regime with no polynomial-tail assumptions. Notice that $\E Y_{\Dn}^k =O(\Dn)$ if $\E X_1^k<\infty$, $k\geq 1,$ and that analogous identities hold for more general L{\'e}vy processes. Indeed, all literature reporting robustness or using the L{\'e}vy--Khintchine representation for the high-frequency regime use this observation to obtain the necessary extra acceleration
: 
e.g., it immediately implies that $\E\left[n(  \varphinhat-\varphi)^{k)}\right]^2=O(T)$, $k\geq 1$. However, our estimator only features $\varphinhat$ and no derivatives of it and, moreover, the underlying reason behind such acceleration is the collapse of $\Pn$ to $\delta_{0}$ at rate $\Dn$ (cf. \eqref{eq:repPintro}). Thus, we only exploit the latter to show that the variance of $n \left( \Pnhat-\Pn\right)f$  
is of order $\Tlambda$, and repeatedly use this in the linear term to derive robust optimal bounds and concentration. Proceeding in the same fashion, we also show that $\E\sup_{\xi\in[-\Xi_n,\Xi_n]} n| \varphinhat-\varphi|(\xi)=O\big(\sqrt{T_\lambda} \log^{1/2+\delta}(\Xi_n)\big)$ for any $\delta>0$
; note the key improvement this gives in the context of compound Poisson processes compared to Theorem 1 in \cite{KR10}, where it was shown that it is $O\big(\sqrt{n}\log^{1/2+\delta}(\Xi_n)\big)$
.  Optimal control of the non-linear term follows and, furthermore, it guarantees the right acceleration of the variance in Talagrand's inequality that allows us derive robust exponential concentration of the uniform fluctuations of $\varphinhat$. The latter, which states that they concentrate as in a parametric setting with sample size $\Tlambda$, 
is of independent interest and adds transparency to the proofs. 

\subsection{Outline}

We split the rest of the article into two main sections: Sections \ref{sec:mainresults} and \ref{sec:proofs}, devoted to our main results and part of their proofs, respectively. The former is divided into several subsections: Sections \ref{sec:settinganddefs} and \ref{sec:fnspacesandwavelets}, introducing the setting and notation used throughout together with Calderon--Zygmund integral operators and  relevant 
results from approximation theory; Section \ref{sec:assumptionsandtheestimator}, including the model assumptions and our estimator; and, Sections \ref{sec:minimaxoptandadapestimator} and \ref{sec:chfn}, containing our main results concerning our estimator and the empirical characteristic function, respectively. Section \ref{sec:minimaxoptandadapestimator} is further divided into four: Sections \ref{sec:minimaxoptimality} and \ref{sec:adaptation}, devoted to the minimax-optimality of our estimator and to adaptation; and, Sections   \ref{sec:unification} and \ref{sec:discussionDn}, where we include a novel unification of the existing literature on nonparametric estimation of discretely observed compound Poisson processes, together with several discussions on robustness to the observation scheme. Section \ref{sec:proofs} contains the proofs and is split into four: Section \ref{sec:proofs:prepresults}, devoted to some preparatory results; Sections \ref{sec:proofs:upperbound} and  \ref{sec:proofs:adaptation}, where we show 
the main theorems in Sections \ref{sec:minimaxoptimality} 
and \ref{sec:adaptation}, respectively; and, Section \ref{sec:proofs:varphivarphin}, devoted to proving the theorem in Section \ref{sec:chfn}. 

\section{Main results}
\label{sec:mainresults}

\subsection{Setting and definitions}
\label{sec:settinganddefs}

Throughout, all random quantities are defined on the probability space $\left(\Omega, \mathcal{A}, \Pr \right)$. Adopt the setting introduced in Section \ref{sec:intro:snip}:  $\mu$ and $\lambda>0$ are the jump measure and intensity of a compound Poisson process $Y:=(Y_t)_{t\geq 0}$ and we have discrete observations $Y_{i \Dn}$ for $i=1, \ldots,n$ and some $\Dn>0$. We define $\nu:=\lambda\mu$ and assume $\mu(\{0\})=0$ and $T:=\Dnn \to \infty$ as $n\to \infty$; we will only require $\supnDn :=\sup_n \Dn  <\infty$  when we explicitly state it. 
Then, $Y$ is a L{\'e}vy process and
%
%
the increments $Z_i:=Y_{i \Dn}-Y_{(i-1) \Dn}$, $i=1, \ldots,n$, are i.i.d. 
with common law $\Pn$ and empirical law $\Pnhat$  defined in \eqref{eq:repPintro} and \eqref{eq:varphinhat}, respectively. 

From now on, all functions and measures will be assumed to be Borel-measurable and Borel, respectively. For any function $f$ on $\R$ and function or measure $Q$ on $\R$, we define $Qf\!:=\!\int_{\R} f(x) Q(dx)$ and, for any 
$\xi,x\!\in \!\R$, we define the Fourier transform and its inverse,
\begin{equation}\label{def:FTFI}
\FT Q (\xi):= Qe^{ \twopii \xi \cdot} 
\qquad \mbox{and} \qquad \FI Q (x) := \frac{1}{2\pi}\FT \mu (-x)= \frac{1}{2\pi} \FT  Q^{-} (x),
\end{equation}
where $ Q^{-}(B)= Q(-B)$ for any Borel set $B\subseteq\R$.  We refer the reader to \cite{F99} for the standard results on Fourier analysis and convolution theory we use in what follows. We write $A_c\lesssim B_c$ if $A_c\le C B_c$ holds with a uniform constant $C$ in the parameter $c$ and $A_c\thicksim B_c$ if $A_c\lesssim B_c$ and $B_c\lesssim A_c$. We use the standard notation $A_c = O(B_c)$ and $A_c = o(B_c)$ to denote that $A_c/B_c$ is bounded and $A_c/B_c$ vanishes, respectively, as $c\to \infty$. We write $r_c'=O_{\Pr}(r_c)$ to denote that $r_c' r_c^{-1}$ is bounded in $\Pr$-probability.

\subsection{Function spaces and Calderon--Zygmund operators}
\label{sec:fnspacesandwavelets}

In this section we review the relevant material from Chapter 4 in \cite{GN16} that we use throughout. Let $\Lp$, $p\in [1,\infty]$, denote the usual Lebesgue space on $\R$ with norm $\norm{\cdot}_p$,  let $C_u(\R)$ be the space of uniformly continuous functions on $\R$ and let $\S(\R)$ denote the Schwartz space on $\R$. We say that a  class of functions is VC if it is of Vapnik--\u{C}ervonenkis type (cf., e.g., Definition 3.6.1 in \cite{GN16}). 
Let $f:\R\to \R$, $K :\R\times\R \to \R$ integrable, which is generally referred to as a kernel function, and $K_h(\cdot,\cdot)=h^{-1}K(\cdot /h, \cdot /h)$ for $h>0$. Then, to construct our estimators we consider integral operators of the form
\begin{equation*}
f \mapsto K_h(f):=\int_{\R}  f(y) K_h(\,,y) dy = \left\langle f, K_h(\, ,\cdot)\right\rangle,
\end{equation*}
where 
$\left\langle f, g\right\rangle:= \int_{\R} f \bar{g}$ with $\bar g$ denoting the complex conjugate of $g$. 
In the theory of singular integrals, these are sometimes called Calderon--Zygmund type-of operators and, as we show in the examples below, they comprise convolution and projection kernels. For all $x,\xi \in \R$ we define $\tildeK(x,\xi)=\tildeK_1(x,\xi)$, where
\begin{equation*}
\tildeK_h(x,\xi):= K_h\big(e^{ \twopii \xi \cdot} \big)(x)=\FTT{K_h(x,\cdot)}(\xi), \quad \mbox{and} \quad 
\tildeK_h(f)(x):= \left\langle f, \tildeK_h(x ,\cdot)\right\rangle. 
\end{equation*}
\begin{cond} \label{condition}
	On the kernel $K$ we impose the 
	following conditions: 
	\begin{enumerate}[label= \arabic*., ref=\arabic* ]
		\item  for every $x \in \R$ and $k\in \N$, \label{cond:1}
		\begin{equation*}
		\int_{\R}K(x,x+y) \,dy=1 \qquad \mbox{and} \qquad \int_{\R}K(x,x+y) \,y^k \,dy=0\,;
		\end{equation*}
		\item for every $y \in \R$ and $k\in \N$, there exists a bounded function $\barK: \R^{+} \to \R^{+}$ such that 
		\label{cond:2}
		\begin{equation*}
		\sup_{x\in \R}|K(x,x-y)| \leq \barK(|y|) \lesssim (1+|y|)^{-k}\,;
		\end{equation*}
		\item 
		for every $h>0$ there exists a countable and uniformly bounded VC class of functions $\mathcal{K}\!:=\!\{\kappa_t\!:\!\R \!\to\! \R\!: t \!\in\! \mathcal{T}\}$ 
		such that, for 
		any finite measure $Q=Q_{d}+Q_{ac}$ on $\R$ with $Q_{d},Q_{ac}$ discrete and absolutely continuous with respect to Lebesgue such that $Q_{ac}$ has density $q\in \Linf$, 
		\label{cond:3}
		\begin{equation*}
		\norm{hK_h(Q)}_{\infty} \lesssim  \sup_{t \in \mathcal{T}} |Q\kappa_t| \qquad \mbox{and} \qquad \sup_{t \in \mathcal{T}}
		\int_{\R} {\kappa_t}^2(x) q(dx) \lesssim h \norm{q}_{\infty},
		\end{equation*}
		where the constants hidden in the notation $\lesssim$ depending on $K$ only; and,
		\item 
		$\supp(\tildeK(x,\cdot))$ does not change with $x\!\in\! \R$ and, for some $\Xi\!>\!0$, it is in $ [-\Xi,\Xi]$. \label{cond:4}
	\end{enumerate}
\end{cond}


Conditions \ref{cond:1} and \ref{cond:2} are standard: due to $k\leq \infty$, they guarantee that the integral operator has infinite order, roughly meaning that $K_h(f)$ captures the regularity of $f$ regardless of how high it is; more precisely, it allows us to control the approximation error $\norm{K_h(f)-f}_p$ optimally 
when $f\in \Lp$, $p\in [1,\infty]$, 
lives in a space of arbitrary smoothness in a sense to be made precise in \eqref{eq:boundLrnormKjpernu} at the end of the section.  Condition \ref{cond:2} also implies that for any $f\in \Lp$ and $h>0$, 
\begin{equation}\label{eq:boundnormKJfpbynormfp}
\norm{K_h(f)}_p \lesssim \norm{f}_p,
\end{equation}
where the constant  in $\lesssim$ only depends on $\barK$. 
Condition \ref{cond:3} 
will be used to control the $\norm{\cdot}_p$-norm of the empirical process-form of 
the linear term (see discussion in Section \ref{sec:intro:techinicalities}) when $p\!=\!\infty$. 
And, 
Condition \ref{cond:4} is a band-limitedness 
condition inducing a spectral cut-off required throughout to deal with the inverse nature of the problem. 

\begin{expl}[Convolution operators]\label{ex:1}
	For some $K\in \Lone$
	, let
	\begin{equation*}
	K(x,y)=K(x-y), \qquad x,y\in \R.
	\end{equation*} 
	%
	Functions $K$ satisfying Conditions \ref{cond:1}-\ref{cond:4} 
	are well-known: e.g., so-called `flat-top kernels', such as the one introduced in \cite{MP04}, defined for any $b>0$ and $c\in(0,1)$ through
	\begin{equation}\label{eq:flattopkernel}
	\FT K (\xi)=\left\{
	\begin{array}{ll}
	1, & \mbox{ if } |\xi|\leq c,\\
	\exp\left(-\frac{b}{(|\xi|-1)^2}\exp\left(-\frac{b}{(|\xi|-c)^2}\right)\right), & \mbox{ if } c<|\xi|< 1,\\
	0, & \mbox{ otherwise;}
	\end{array}	
	\right.
	\end{equation} 
	indeed, $K$ satisfies Condition \ref{cond:4} 
	trivially in view of $\tildeK(x,\xi)=e^{ \twopii \xi x} \FT K(-\xi)$; furthermore, $\FT K$ is infinitely differentiable on $\R$, so $K\in \mathcal{S}(\R)$ and Conditions \ref{cond:1}-\ref{cond:2} follow due to $\int_{\R}K(y) \,dy=\FT K(0)$ and $\int_{\R}K(y) \,y^k \,dy=(\twopii)^{-k}\frac{d^k}{d\xi^k}\FT K(\xi) \mid_{\xi=0}$ for all $k\in \N$, and Condition \ref{cond:3} by, e.g., Remark 5.1.4 and Case 4 in the proof of Theorem 5.1.5 in \cite{GN16}.
\end{expl}

\begin{expl}[Projection operators]\label{ex:2}
	For some $\phi\in\Ltwo$
	, let
	\begin{equation*}
	K(x,y)=\sum_{k\in \Z} \phi(x-k) \phi(y-k), \qquad x,y\in \R.
	\end{equation*}
	In the terminology of wavelet analysis, $\phi$ is generally referred to as the scaling function. It can be chosen to satisfy Conditions \ref{cond:1}-\ref{cond:4} and, moreover, so that it generates a multiresolution analysis of $\Ltwo$
	: $V_0:=\{\phi_k\!:=\!\phi(\cdot-k)\!: k\in \Z\}$ forms an orthonormal system in $\Ltwo$; $V_{j-1} \subset V_j$ for all $j\in \N$, where $V_j:=\linspan(\{\phi_{j,k}\!:=\!2^{j/2}\phi(2^j\!\cdot-k)\!: k\in \Z\})$; and, $\cup_{j\geq 0} V_j$ is dense in $\Ltwo$. Writing $h=2^J$ we have that $K_J(f):=K_{h(J)}(f)$ is the projection of $f$ onto $V_J$. Equivalently, a band-limited wavelet $\psi$ can be constructed from $\phi$ (cf. end of Section \ref{sec:implementation} below), i.e. a function $\psi\! \in \!\Ltwo$ such that $\{\psi_{j,k}\!:=\!2^{j/2}\psi(2^j\!\cdot-k)\!: k\!\in\! \Z\}$ is an orthonormal basis for the orthogonal complement $W_j$ of $V_j$ in $V_{j+1}$, so that $\Ltwo\!=\!V_0\oplus (\bigoplus_{j=0}^{\infty}W_j)$ and
	\begin{equation}\label{eq:defKjper}
	K_{J}(f)(x)=\sum_{k\in \Z} \left\langle f, \phi_{k}  \right\rangle  \phi_{k}(x)+ \sum_{j=0}^{J-1} \sum_{k\in \Z}\left\langle f, \psi_{j,k} \right\rangle \psi_{j,k}(x)  , \quad J\in \N \cup \{0,\infty\}, x\in \R.
	\end{equation}
	In the historical development of orthogonal wavelet basis, scaling functions satisfying all of the above conditions were the first to appear after Haar and Shannon: Meyer \cite{M86} constructed $\phi$ through
	\begin{equation}\label{eq:FTphi}
	\FT \phi (\xi)=\left\{
	\begin{array}{ll}
	1, & \mbox{ if } |\xi|\leq \frac{2\pi}{3},\\
	\cos\left[\frac{\pi}{2}\chi\left(\frac{3}{2\pi}|\xi|-1\right)\right], & \mbox{ if } \frac{2\pi}{3}\leq |\xi|\leq \frac{4\pi}{3},\\
	0, & \mbox{ otherwise,}
	\end{array}	
	\right.
	\end{equation} 
	where $\chi:\R\to \R$ is any infinitely differentiable function satisfying $\chi(\xi)=0$ for $\xi\leq 0$, $\chi(\xi)+\chi(1-\xi)=1$ when $\xi\in (0,1)$ and $\chi(\xi)=1$ for $\xi\geq 1$; 
	indeed, $\phi$ satisfies Condition \ref{cond:4} 
	trivially in view of $\widetilde{K}(x,\xi)=\FT \phi (\xi) \sum_{k\in \Z}\phi(x-k) e^{\twopii \xi k}$; furthermore, $\FT \phi$ is infinitely differentiable on $\R$, so $\phi\in \mathcal{S}(\R)$ and we refer the reader to, e.g., Propositions 4.2.6 and 4.2.5, and Remark 5.1.4 in \cite{GN16} for further details as to why it satisfies Conditions \ref{cond:1}-\ref{cond:3}. 	If we denote the even function in \eqref{eq:flattopkernel} by $\upsilon$, we can simply take 
	\begin{equation}\label{eq:chi}
	\chi(\xi)=\frac{1}{2}\upsilon(2\xi-1)\one_{ (-\infty,1/2]}(\xi)+\left(1-\frac{1}{2}\upsilon(2\xi-1)\right)\one_{ [1/2,\infty)}(\xi), \qquad \xi\in \R.
	\end{equation}
	We remark that, despite our results apply to wavelet estimators, the adaptation strategy we use in Section \ref{sec:adaptation}, namely Lepski\u{i}'s method, does not exploit their ability to extract local structure of inhomogeneous jump measures. This is a compromise we make to achieve the generality of our results.

\end{expl}


With the notation and conditions of Example \ref{ex:2}, if $f\in \Lp$, $p\in[1,\infty)$, or $f\in C_u(\R)$, for $J=\infty$ it holds that $K_{J}(f)=f$ in $\Lp$ or in $C_u(\R)$, respectively. We are now ready to introduce Besov spaces. We denote them by $B_{p,q}^s(\R)$ with $p\in [1,\infty], q\in [1,\infty]$ and $s>0$, and we shall only be concerned with Nikolski\u{i} classes, i.e. $q=\infty$, for simplicity. 
Using the band-limited basis introduced in Example \ref{ex:2}, we define
\begin{equation*}
B_{p,\infty}^s(\R)\!:= \! \left\{
\begin{array}{ll}
\{f\in \Lp\!: \norm{ f}_{B_{p,\infty}^s}<\infty\}, & \mbox{ if } p\in[1,\infty), \\
\{f\in C_u(\R)\!: \norm{ f}_{B_{p,\infty}^s}<\infty\}, & \mbox{ if } p=\infty, 
\end{array}	
\right.
\end{equation*}
where, letting $\norm{ a }_{\ell^p}\!:=\!  (\sum_{k\in \Z}\abs{a_k}^p)^{1/p}$ if $p\!\in\! [1,\infty)$ and $\norm{ a }_{\ell^{\infty}}\!:=\!\sup_{k\in\Z} |a_k|$ for  any real sequence $a\!=\!(a_k)_{k\in \Z}$, 
\begin{equation*}
\norm{ f}_{B_{p,\infty}^s}\!\!:= \norm{ \left\langle f, \phi_{\cdot}  \right\rangle}_{\ell^p} + \sup_{j\geq 0}  2^{j(s+1/2-1/p)} \norm{\left\langle f, \psi_{j,\cdot} \right\rangle}_{\ell^p}.
\end{equation*}
Then, for any $f\!\in\! B_{p,\infty}^s(\R)$ and for $K_{J}(f)$ as in Example \ref{ex:2}, it is clear that 
\begin{equation}\label{eq:boundLrnormKjpernu}
\norm{K_{J}(f) -f }_{p} 
\lesssim \norm{f }_{B_{p,\infty}^s} 2^{-Js}, 
\end{equation}
where the constant in $\lesssim$ 
depends on $K$. 
From now on we write $h\!=\!2^J$ and $K_J\!=\!K_{h(J)}$ for general operators for conciseness and because in the adaptation results of Section \ref{sec:adaptation} we have to discretise $h$ for any integral operator so it will be a natural transformation. With this notation and using Littlewood--Paley theory, it can be shown that any integral operator satisfying Conditions \ref{cond:1} and \ref{cond:2} satisfies the last display  with the hidden constant possibly depending on $s$ too (cf. Proposition 4.3.8 in \cite{GN16}).

\subsection{Assumptions and the estimator} \label{sec:assumptionsandtheestimator}

\subsubsection{Assumptions}

\begin{assump} \label{assumption}
	We assume that the unknown 
	$\mu=\nu/\lambda$ has a density with respect to Lebesgue's measure, denoted by $\mu$ too, satisfying 
	the following assumptions:
	\begin{enumerate}[label= \arabic*., ref=\arabic* ]
		\item $\mu\in B_{p,\infty}^s(\R)$ for some $p\in [1,\infty]$ and $s>0$; and,\label{ass1}
		\item depending on the result, one of the following tail conditions is satisfied:\label{ass2}
		\begin{enumerate}
			\item $\norm{\mu}_{q,\alpha}:=\norm{(1+|\cdot|)^{\alpha} \mu}_q<\infty$ for some $\alpha>0$ and $q\in[1,\infty)$; or, \label{ass2a}  
			\item $	\norm{ \log^{\beta} (\max\{|\cdot|,e\}) \mu }_1<\infty$ 
			for some $\beta>1$. \label{ass2b}
		\end{enumerate}
	\end{enumerate}
\end{assump}


As it is customary, Assumption \ref{ass1} will be used to control part of the bias of the estimator via \eqref{eq:boundLrnormKjpernu}. Assumptions \ref{ass2a} and \ref{ass2b} presume $\mu$ to have some polynomial and logarithmic moment, respectively,  and, further down in this section, we will comment on their roles. To show Theorems \ref{thm:upperbound} and \ref{thm:adaptation} in the next section when Assumption \ref{ass1} is satisfied for some $p\in[1,2)$, we will have to strengthen slightly condition $\alpha>0$ in Assumption \ref{ass2a}, whilst for $p\in[2,\infty]$ we will only require Assumption \ref{ass2b}. We refer the reader to the discussion after Theorem \ref{thm:upperbound} for more details on the mildness of these assumptions to prove our results.


\subsubsection{The estimator}

To construct our estimator we define 
%
\begin{equation*}\label{eq:varphintilde}
\varphintilde(\xi):=e^{\lambdaDn} \varphin(\xi)
= e^{\Dn \FT \nu (\xi) }, \qquad \xi \in  \R,
\end{equation*}
where the second equality follows from the L{\'e}vy--Khintchine formula  \eqref{eq:repvarphiintro}. Consequently, 
\begin{equation}\label{eq:identityLK2}
\FT \nu (\xi) = \frac{1}{\Dn} \Log \varphintilde \, (\xi) \qquad  \mbox{for all } \xi\in  \R,
\end{equation}
where $\Log \varphintilde$ is the distinguished logarithm of $\varphintilde$,  i.e. the unique continuous function satisfying $\exp \big(\Log(\varphintilde)(\xi)\big)=\varphintilde(\xi)$ for all $\xi \in \R$
; we remark that it coincides with the principal branch of the logarithm 
if $\varphintilde$ does not cross the negative real line, e.g. when $\lambdaDn < \pi$, which we need not assume. It follows that, formally, 
\begin{equation}\label{eq:identityestimator}
\nu(x) = \frac{1}{\Dn}  \FII{ \Log \varphintilde} (x), \qquad x \in \R. 
\end{equation}
Prior to proposing a data-driven approximation, the right hand side can be approximated with the help of the Calderon--Zygmund operators from the previous section; we recall that we use the notation introduced at the end of it throughout. 
Then, for any $J\in \Nz$, a formal approximation to $\nu(x)$, $x\in \R$, is given by 
\begin{align}\label{eq:innerprodPlancharel}
\KJ{ \frac{1}{\Dn}  \FII{ \Log \varphintilde}  }(x)&:= \frac{1}{\Dn} \int_{\R} \KJ{x,y} \FII{ \Log \varphintilde}\!(y) dy \notag \\
&=\frac{1}{2\pi\Dn} \left\langle    \Log \varphintilde ,\tildeKJ{x,\cdot} \right\rangle=:\frac{1}{2\pi}\tildeKJ{\frac{\Log \varphintilde}{\Dn}  }\!(x) ,
\end{align}
where in the second equality we used Fubini's theorem formally. 
Note that, 
\begin{equation}\label{eq:FTphikFTpsiJk}
\tildeKJ{x,\xi}=\tildeK\left(2^Jx, 2^{-J}\xi\right), \qquad \mbox{for all }J\in \Nz \mbox{ and } x,\xi\in \R,
\end{equation}
and, using 
Conditions \ref{cond:2} and \ref{cond:4}, it is simple to show that, for all $J\in \Nz$, 
\begin{equation}\label{eq:suptildeKJ}
\sup_{x\in \R} |\tildeKJ{x,\cdot}|\in  \Linf  \quad \mbox{and} \quad \supp\left(\sup_{x\in \R} |\tildeKJ{x,\cdot}|\right) \subseteq [-\Xin,\Xin].
\end{equation}
Then, whilst the left side in \eqref{eq:innerprodPlancharel} 
is valid only formally, the right  side therein 
is well defined for all $x\!\in\! \R$ 
in view of \eqref{eq:identityLK2} and  $\nu\! \in\! \Lone$. 
Consequently, to estimate $\nu$ we can estimate the latter by plugging in the empirical version of $\varphintilde\!:=\! e^{\lambdaDn} \varphin$  obtained by replacing $\lambda$ by an estimator of it and  $\varphin$ by its empirical version defined in \eqref{eq:varphinhat} when $k=0$. 
Due to the assumption that the L{\'e}vy measure $\nu$ of the compound Poisson process $Y$ 
has a Lebesgue density, $\lambda$ can be estimated by
\begin{equation}\label{eq:lambdanhat}
\lambdanhat:= - \frac{1}{\Dn} \log\left( \frac{1}{n}\sum_{i =1}^n \onez(Z_i)\right). 
\end{equation}
Hence, we estimate $\varphintilde$ by
\begin{equation}\label{eq:hatwidetildevarphin}
\varphinhattilde(\xi):=e^{\lambdanhat \Dn} \varphinhat(\xi)=\frac{1}{n_0} \sum_{i=1}^n e^{\twopii \xi Z_i}, \qquad \xi \in \R,
\end{equation}
where $n_0$ is the number of zero increments in the sample $\Zs$. While estimating $\lambda$ by $\lambdanhat$  results in a simple expression for $\varphinhattilde$, it also implies that both estimators are not well defined when $n_0=0$. The probability that this occurs 
is bounded above by $\left(1-\exp(-\lambdasupnDn)\right)^n$, which tends to $0$ exponentially fast as $n\to \infty$, so 
if $n_0=0$  the final estimator for $\nu$ will be set to zero; see Remark \ref{rem:n0=0} below for more details on how to estimate $\nu$ when $n_0=0$.  Another complication may arise from using $\varphinhattilde$ to estimate the right hand side in \eqref{eq:innerprodPlancharel}: a priori, $\varphinhattilde$ could cross the origin, and 
the distinguished logarithm is not defined if the function it acts on takes the value zero. Yet, in view of \eqref{eq:suptildeKJ}, 
our estimation strategy will be justified in the sets
\begin{equation}\label{eq:defOmegan}
\Omegan=\Omegan\left(2^J\right):= \left\{ \omega \in \Omega: \, \inf_{ |\xi| \leq \Xin } |\varphinhat(\xi)| >0 \, \,\, \mbox{ and } \,\,\, n_0>0 \right\};
\end{equation}
it will follow from 
Lemma \ref{lem:OmegantildesubseteqOmegan} below that, under Assumption \ref{ass2b}, for the familiar choice
\begin{equation}\label{eq:2J}
2^J=\twoJn:= \left(\frac{\Tlambda}{(\log \Tlambda)^{\one_{\{\infty\}}(p)}}\right)^{1/(2s+1)}
\end{equation} 
and some constant $L>0$,  $\Pr(\Omegan)\geq  1-e^{- L \Tlambda}$, which tends to $1$ exponentially fast as $\Tlambda \to \infty$
. On the complement of $\Omegan$ we let the estimator be identically zero and remark that even in the extensive simulations included in Section 4.4 in \cite{C17} where a similar estimation strategy was used, no experiments falling into these sets have been reported.  
Lastly, to prove our results, $ \Dnsuper{-1}\Log\varphinhattilde$ 
cannot be arbitrarily large
, so we truncate it 
to an increasingly large threshold when its modulus exceeds it; in view of \eqref{eq:identityLK2}, such modification is harmless asymptotically. In turn, it will mean that for $p\in[1,2)$ we have to truncate the tails of the estimator to zero
---which we also need to control the non-linearities
---, resulting in the appearance of Assumption \ref{ass2a}. Hence, for $H
, U
\to \infty$ and on $\Omegan$,
we propose the estimator
\begin{align}\label{eq:estimatornu}
\hat{\nu}(x)&:=  
\frac{1}{2\pi} \tildeKJ{\rightbarU{\frac{\Log \varphinhattilde}{\Dn} }}\!(x)
\one_{[-H,H]}(x), \qquad x\in \R,
\end{align}
where $\rightbarU{z}:= z \one_{|z|\leq U}+U\one_{|z|> U}$ for any $z\in\C$ and $U\geq 0$, and on $ \Omegan^{\!\!\!\!\setcomplement}$ we set $\hat{\nu}\equiv 0$. 
When estimating $\nu$ in the $\norm{\cdot}_{p}$-norms for $p\in[2,\infty]$, we will not need the indicator function or, equivalently, we will be able to take $H=\infty$. 

\begin{rem}
	\label{rem:n0=0}
	The estimator $\hat{\nu}$ is identically zero when $n_0=0$ but, as mentioned after \eqref{eq:hatwidetildevarphin}, the likelihood of this occurring tends to $0$ exponentially fast as $n\to \infty$. This case (and $n_0>0$ too) can be dealt with by using the estimators for $\lambda$ proposed in \cite{C18} and \cite{C17} instead of $\lambdanhat$ to estimate $\varphintilde$,  as they both satisfy a central limit theorem under Assumptions \ref{ass1} and \ref{ass2b}. Furthermore, their use, together with that of the estimator of the drift proposed in \cite{C18}, guarantees the extension of the results herein to the setting considered in \cite{C18} where $Y$ has an unknown, possibly non-zero, drift and $\nu$ has a discrete component, possibly allowing for perfect cancellations of unobserved jumps in between observations. Nevertheless, the intricate expressions of these estimators complicate parts of the proofs, so this extension is beyond the scope of the present article.
\end{rem}

\subsubsection{Implementation of the estimator}\label{sec:implementation}


In practice, $\hat{\nu}$ is computed at finitely many points, so the indicator $\one_{[-H,H]}$ in its expression may be interpreted as the window on which $\hat{\nu}$ is computed and no actual truncation  needs to be implemented. Similarly, the key quantity $\varphinhat$ featuring in the estimator can be evaluated only at a finite grid of $[-\Xin,\Xin]$. Since we show that $\varphinhat$ converges uniformly to $\varphi$ on such intervals, $\varphinhat$ and $\varphi$ are continuous, and $\inf_{\xi\in \R}|\varphi(\xi)|\geq e^{-2\lambda \Dn}>0$, if the grid is fine enough, the condition for $\varphinhat$ in the definition of $\Omegan$ in \eqref{eq:defOmegan} can be relaxed to be satisfied only at the grid; let us denote this new probability set by $\Omega_n'$. As argued in Section 2.3.1 in \cite{C17}, $\Log \varphintilde$ coincides with the principal branch of the logarithm of $\varphintilde$ if $\lambdaDn<\pi$. Then, due to the uniform convergence of $\varphinhat$ to $\varphi$, when on $\Omega_n'$ and if $\lambdaDn<\pi$ (in fact, $\lambdaDn\lesssim 5$ suffices in many practical situations as observed in Section 4.4.1.2 in \cite{C17}) the computation of $\Log \varphintilde$ is immediate generally; 
if $\lambdaDn\geq \pi$, the grid may have to be further refined to compute $\Log \varphintilde$ following the construction of the distinguished logarithm in Theorem 7.6.2 in \cite{C01}. Either way, it is clear that the truncation induced by $U$ in the discretised version of the inner product in $\hat \nu$ 
can be ignored. The resulting quantity can be computed using fast Fourier transforms (FFT) and we know spell out the details for the choices of operators in Examples \ref{ex:1} and \ref{ex:2}. \\

\noindent\textbf{Example 2.1} (Continued). \textit{Without the truncations and before discretisation, 
	$\hat \nu$ is
	\begin{equation*}
	\int_{\R}  \frac{\Log \varphinhattilde(\xi)}{\Dn} \FII{K_J(x,\cdot)}(\xi) d\xi  =  \frac{1}{2\pi \Dn} \int_{\R} e^{\twopii \xi x} \Log \varphinhattilde(\xi) \FT K \left(2^{-J}\xi\right) d\xi,
	\end{equation*}
	where we recall that $\FT K$ is as in \eqref{eq:flattopkernel}. Therefore, if $[-H,H]$ is discretised into a grid of size $M\sim 2^J$
	, the discretised versions of all integrals can be easily and efficiently computed with a single FFT and we refer the reader to Section 4.3.1 in \cite{C17} for more details (note that the expression on the left hand side above falls under the general expression on the right hand side of (4.3.1) therein).}\\

\noindent\textbf{Example 2.2} (Continued). \textit{Due to Conditions \ref{cond:2} and \ref{cond:4}, the corresponding quantity is 
	\begin{align*}
	\sum_{k\in \Z}\phi_{J,k}(x) \!\int_{\R}  \!\frac{\Log \varphinhattilde(\xi)}{\Dn}  \FII{\phi_{J,k}}\!(\xi) d\xi  \!=\! \sum_{k\in \Z}\frac{\phi(2^Jx\!-\!k)}{2\pi \Dn} \!\int_{\R} \!e^{\twopii \xi k2^{-J}} \!\Log \varphinhattilde(\xi) \FT \phi (2^{-J}\xi) d\xi,
	\end{align*}
	where we recall that $\FT \phi$ is as in \eqref{eq:FTphi} with $\chi$ given in \eqref{eq:chi} so, in particular, it is even. Then, by the uniform convergence of $\varphinhat$ to $\varphi$ on compact intervals, the regularity of $\nu$ and the localisation of the Meyer-type wavelets, only finitely many integrals are nonzero within machine precision. 
	Furthermore, if the tails of $\nu$ decay sufficiently fast, the number of these is $K\sim 2^J$ and all integrals can be computed easily and efficiently with a single FFT as in the previous example. 
	Due to $\phi\in \S(\R)$ and the Fourier inversion theorem, 
	\begin{equation*}
	\phi(x)=\frac{1}{2\pi } \int_{\R} e^{\imath \xi x } \FT \phi (\xi) d\xi \qquad \mbox{for all }x\in \R, 
	\end{equation*}
	so if $[-H,H]$ is discretised into a grid of size $M\sim 2^J$, the values of the functions $\phi_{J,k}(x)$ can be approximated again with a single FFT. A more classical alternative to this route once the approximation of the integrals in the penultimate display has been computed goes as follows: apply the decomposition and reconstruction (or Laplace pyramidal, filter-bank style) technique of Mallat \cite{M89} with the low-pass filter 
	\begin{equation*}
	m_0(\xi/2)=\left\{
	\begin{array}{ll}
	\FT\phi(\xi), & \mbox{ if } \xi \leq 4\pi/3,\\
	0, & \mbox{ otherwise },
	\end{array}
	\right.
	\qquad\mbox{(extended periodically to $\R$)}
	\end{equation*}	
	to find approximations to 
	\begin{align*}
	\hat \alpha_k:=
	\left\langle  \FII{\frac{\Log \varphinhattilde}{\Dn} \one_{ \supp(\phi_k)} }
	\!,   \phi_{k} \right\rangle \,\,\mbox{ and }\,\, \hat \beta_{j,k}:=
	\left\langle  \FII{\frac{\Log \varphinhattilde}{\Dn} \one_{ \supp(\psi_{j,k})}}
	\!,   \psi_{j,k} \right\rangle;
	\end{align*}
	and, for $K\sim 2^J$ as above, compute
	\begin{align*}
	\sum_{|k|\leq K} \hat \alpha_k  \phi_{k}(x) + \sum_{j=0}^{J-1} \sum_{|k| \leq K}\hat \beta_{j,k} \psi_{j,k}(x),   
	\end{align*}
	where $\phi$ and $\psi$ are evaluated at a dyadic discretisation of $[-H,H]$ with, e.g., a cascade algorithm using 
	$m_0$. This is an infinite impulse filter but, due to $\phi \in \S(\R)$, the tail coefficients in its discrete Fourier transform-representation can be truncated safely. Indeed, decomposition and reconstruction, and cascade algorithms for Meyer-type wavelets are inbuilt in the wavelet packages of most contemporary programming languages. }


\subsection{Asymptotic guarantees for the estimator} \label{sec:minimaxoptandadapestimator}

\subsubsection{Minimax-optimality}
\label{sec:minimaxoptimality}


Let $\FT_{p,s,\alpha}(B_1,B_2)\!:=\!\{f\geq 0\!: \norm{f}_1=1, \norm{f}_{B_{p,\infty}^s} \!\leq \!B_1 \mbox{ and, if $p\in[1,2)$, } \norm{f}_{1,\alpha}\!\leq\! B_2 \}$ for $p\!\in \![1,\infty]$, $s\!>\!0$, $\alpha>2/p-1$ and $B_1,B_2\!\geq \!1$. By the classical work in \cite{DJKP96}, the \textit{minimax rate of estimation} for a probability density function  $f\in\mathcal{F}_{p,s,\alpha}(B_1,B_2)$ from an i.i.d. sample of it of size $m\!>\!0$ under the norm $\norm{\cdot}_p$ is 
\begin{equation}\label{eq:varepsilonm}
\varepsilon_{m}:= \left(\frac{m}{(\log m)^{\one_{\{\infty\}}(p)}}\right)^{-s/(2s+1)},
\end{equation} 
i.e.,  there exist 
constants $\underline{C}=\underline{C}_{p,s,\alpha}(B_1,B_2),\overline{C}=\overline{C}_{p,s,\alpha}(B_1,B_2)>0$ and an estimator $\hat f_m$ such that 
\begin{equation*}
\liminf_{m} \varepsilon_{m}^{-1} \inf_{\tilde f_m}\sup_{  \FT_{p,s,\alpha}(B_1,B_2) 
} E_{f}\norm{\tilde{f}_m-f }_p \geq \underline{C} \quad \mbox{ and} \quad \sup_{ \FT_{p,s,\alpha}(B_1,B_2) 
}E_{f}\norm{\hat{f}_m-f }_p \leq \overline{C} \varepsilon_{m},
\end{equation*}
where the infimum is over all possible estimators $\tilde f_m$. An estimator that attains the rate $\varepsilon_{m}$ is called \textit{minimax-optimal}, and if it further satisfies the property for $\hat f_m$ above without using knowledge of $s$ in its construction, \textit{adaptive in the minimax sense}. Note that, in an interval of time $T$, the number of (independent) jumps that the compound Poisson process $Y$ gives follows a Poisson distribution with parameter $\lambda T=:\Tlambda$. Thus, if we continuously observed $Y$ on $[0,T]$ for large $T$, we would expect to see around $\Tlambda$ i.i.d. random variables with common distribution $\mu$. Intuitively, estimation in the continuous observation regime should be no harder from a statistical point of view than in any discrete observation regime and, consequently, the minimax rate of estimation 
should be no faster than $\varepsilon_{\Tlambda}$. Using this intuition
, \cite{D13} showed 
in Theorem 2.2  that, indeed, for the high-frequency regime, $T^{-s/(2s+1)}$ is a lower bound for the rate if $p\in [1,\infty)$, $\lambda \in \Lambda$ and $\mu\in B^s_{p,\infty}(A)$ for some compact intervals $\Lambda\subset (0,\infty)$ and $A\!\subset\! \R$. Inspection of her proof reveals that, if $\Lambda=\{\lambda\}$, an extra $\lambda$ term features naturally giving $\Tlambda^{-s/(2s+1)}$ and, moreover, the bound is valid for arbitrary frequencies without assuming $\Dn \to 0$ or even $\supnDn<\infty$. Furthermore, her ideas can readily be used when $p=\infty$ and $A=\R$ too to conclude that, 
for any $p,s,\alpha,B_1,B_2$ as above,
there exists a constant $\underline{C}'=\underline{C}_{p,s,\alpha}'(\Lambda,B_1,B_2)$ such that
\begin{equation*}
\liminf_{T} \varepsilon_{T}^{-1} \inf_{\tilde\nu_{T}}\sup_{\substack{ \nu=\lambda \mu:\, \lambda\in \Lambda \\ \mu\in \mathcal{F}_{p,s,\alpha}(B_1,B_2)
}}E_{P_{\nu}}\norm{\tilde{\nu}_{T}-\nu }_p \geq \underline{C}', 
\end{equation*}
where the infimum is over all possible estimators $\tilde\nu_{T}$ based on $n$ discrete $\Dn$-equispaced observations. Note that if $\Lambda=\{\lambda\}$, one can write $\Tlambda$ in place of $T$ in the last display and take the supremum simply over $\mu$; for such case,
it follows from the next theorem that the rate $\varepsilon_{\Tlambda}$ is attained by our estimator $\hat \nu_{\Tlambda}$ if $\supnDn<\infty$ 
and, thus,  $\hat \nu_{\Tlambda}$ is {minimax-optimal}. 
In the next section we modify it so that it attains the rate $\varepsilon_{T}$ uniformly over the compact functional class for $\nu$ (with, possibly, $\{\lambda\} \subsetneq \Lambda$) without knowledge of $s, \alpha$ and $\lambda$. We will require $\nu$ to satisfy slightly stronger tail assumptions that do not change the dimension of the functional space and, hence, the lower bound in the last display  remains valid. In view of the definition above, this will imply that $\varepsilon_{T}$ is the minimax rate of estimation for the given class and that the improved estimator is
{adaptive in the minimax sense}. 

\begin{thm} \label{thm:upperbound}
	
	Adopt the setting and notation introduced in Section \ref{sec:settinganddefs} with $\supnDn <\infty$. Assume that $\mu=\nu/\lambda$ satisfies Assumption \ref{ass1} for some $p\in [1,\infty]$ and $s>0$; if $p\in [1,2)$, let it satisfy Assumption \ref{ass2a} for some $\alpha>2/p-1$ and for both $q=1$ and $q=p$; and, if $p\in [2,\infty]$, assume instead that it satisfies Assumption \ref{ass2b} for some $\beta>1$. 
	Let $\nunhat$ be as in \eqref{eq:estimatornu} with $2^J=\twoJn$ given by \eqref{eq:2J}, 
	\begin{equation}\label{eq:Un}
	U=\Un= \Tlambda^{\vartheta_U}, \quad 0< \vartheta_U<\infty,
	\end{equation}
	\begin{equation}\label{eq:Hn}
	H=\Hn= \Tlambda^{\vartheta_H}, \quad \vartheta_H \in 
	\left( \frac{ 1}{\alpha }\frac{s}{2s+1}, \frac{ 1}{2/p-1}\frac{1+s-1/p}{2s+1}\right), \quad  \mbox{ if } p\in[1,2),\\
	\end{equation}
	and $H=\infty$ if $p\in[2,\infty]$. Then, for $\Tlambda$ large enough,
	\begin{align}\label{eq:upperbound}
	\E \norm{\nunhat\!-\nu}_p \! \leq  &   \lambda \!\left(2 e^{2\lambdasupnDn} 
	C^{(MS)} \sqrt{ \frac{\twoJn}{\Tlambda}}+ C^{(B)}\norm{\mu}_{B_{p,\infty}^s} 2^{-sJ_{\Tlambda} }\right)   \\
	& + \! \lambda\!\left(  1\!+\! \left(e^{3\lambdasupnDn}\!+\!1\right) \norm{\mu}_p \!+\!  C^{(R)}\lambda \supnDn e^{4\lambdasupnDn}  
	\!+\!\norm{  \mu}_{p,\alpha} \one_{[1,2)}(p)    \right) o\left(\varepsilon_{\Tlambda}\right)\!, \notag
	\end{align}
	where, for $\barK$ 
	as in Condition \ref{cond:2}, 
	$C^{(MS)}=C^{(MS)}_{p,\alpha}(\lambda, \mu,\barK)$ is defined in Remark \eqref{rem:ctsminimax}, 
	$C^{(B)}=C^{(B)}_s(K)$
	and $C^{(R)}=C^{(R)}_p( K)$.

\end{thm}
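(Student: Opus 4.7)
The plan is to split
\[
\hat\nu - \nu = \big(\hat\nu - K_J(\nu)\one_{[-H,H]}\big) + (K_J(\nu)-\nu)\one_{[-H,H]} - \nu\one_{[-H,H]^c},
\]
where, by the identity $K_J(\nu)(x) = \tfrac{1}{2\pi}\tilde K_J[\Log\varphintilde/\Dn](x)$ coming from \eqref{eq:identityLK2}--\eqref{eq:innerprodPlancharel}, the first bracket equals $\tfrac{1}{2\pi}\tilde K_J\!\left[\rightbarU{\Log\varphinhattilde/\Dn} - \Log\varphintilde/\Dn\right]\one_{[-H,H]}$ on $\Omegan$ and is zero on $\Omegan^{\setcomplement}$. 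The deterministic bias $(K_J(\nu)-\nu)\one_{[-H,H]}$ is handled by \eqref{eq:boundLrnormKjpernu} and yields the $C^{(B)}\lambda\|\mu\|_{B_{p,\infty}^s}2^{-sJ_{\Tlambda}}$ summand. The tail term $\nu\one_{[-H,H]^c}$, relevant only when $p\in[1,2)$, is bounded by $H^{-\alpha}\lambda\|\mu\|_{p,\alpha}$ using Assumption \ref{ass2a}, and the lower bound on $\vartheta_H$ in \eqref{eq:Hn} is designed precisely to make this $o(\varepsilon_{\Tlambda})$, feeding the $\|\mu\|_{p,\alpha}\one_{[1,2)}(p)$ prefactor. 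On $\Omegan^{\setcomplement}$ the estimator vanishes by construction, so that set contributes at most $\lambda\|\mu\|_p\Pr(\Omegan^{\setcomplement}) \le \lambda\|\mu\|_p e^{-L\Tlambda}$ by the lemma announced around \eqref{eq:2J}, easily absorbed into the $o(\varepsilon_{\Tlambda})$ pieces.

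\textbf{Linearising the stochastic term.} On $\Omegan$ I would use $\varphinhattilde/\varphintilde = e^{(\lambdanhat-\lambda)\Dn}(\varphinhat/\varphi)$ and Taylor-expand on the high-probability event where $\sup_{|\xi|\le 2^J\Xi}|(\varphinhat-\varphi)/\varphi|<1/2$, obtained from the sup-bound in Section \ref{sec:chfn} together with $|1/\varphi|\le e^{2\lambdasupnDn}$; on this event the $U$-truncation is inactive, and
\[
\tfrac{1}{\Dn}(\Log\varphinhattilde - \Log\varphintilde) = (\lambdanhat-\lambda) + \tfrac{1}{\Dn}\tfrac{\varphinhat-\varphi}{\varphi} + \tfrac{1}{\Dn}R,\qquad R=\sum_{k\ge 2}\tfrac{(-1)^{k+1}}{k}\Big(\tfrac{\varphinhat-\varphi}{\varphi}\Big)^{\!k}.
\]
The scalar $(\lambdanhat-\lambda)$ contributes at parametric rate $O_{\Pr}(n^{-1/2})$ after acting by $\tilde K_J$, dominated by $\varepsilon_{\Tlambda}$. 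The leading linear term, after Fubini, is the empirical process
\[
L(x) = \big(\Pnhat - \Pn\big)[g_x],\qquad g_x(z) = \tfrac{1}{2\pi\Dn}\int e^{\twopii\xi z}\overline{\tildeKJ{x,\xi}}/\varphi(\xi)\,d\xi,
\]
which is uniformly bounded and supported (in $\xi$) in $[-\Xin,\Xin]$ by \eqref{eq:suptildeKJ}.

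\textbf{$L^p$ control of $L$ and of $R$.} To bound $\E\|L\|_p$ I would invoke Condition \ref{cond:3}: for $p=\infty$ apply Talagrand's inequality to the VC class $\K$ controlled there, for $p\in\{1,2\}$ use Fubini/Parseval directly, and interpolate via Riesz--Thorin for intermediate $p$. The crucial robustness point is that the point mass $\Pn(\{0\}) = e^{-\lambdaDn}$ yields
\[
n\,\mathrm{Var}(g_x(Z_1)) \le n\int|g_x|^2\,d\Pn \lesssim \Tlambda\int|g_x|^2\,d\mu + n e^{-\lambdaDn}|g_x(0)|^2,
\]
where the Dirac contribution is absorbed thanks to the oscillatory definition of $g_x(0)$, yielding $\E\|L\|_p \le e^{2\lambdasupnDn}C^{(MS)}\sqrt{2^J/\Tlambda}$ and hence the leading variance term of \eqref{eq:upperbound} once $2^J=\twoJn$ is substituted. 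For the residual, $\|R\|_{L^\infty([-\Xin,\Xin])}\lesssim e^{4\lambdasupnDn}\sup_{|\xi|\le \Xin}|\varphinhat-\varphi|^2$, and combining with the uniform bound of Section \ref{sec:chfn} (which is $O_{\Pr}(\sqrt{\Tlambda}\log^{1/2+\delta}(2^J)/n)$) and the trivial operator estimate from \eqref{eq:suptildeKJ} produces a contribution of order $\lambda\supnDn e^{4\lambdasupnDn}\cdot 2^J \Tlambda / n^2 = o(\varepsilon_{\Tlambda})$, i.e. the $C^{(R)}\lambda\supnDn e^{4\lambdasupnDn}$ prefactor.

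\textbf{Main obstacle.} The heart of the argument is to extract the $\Tlambda$-scaling (rather than the $n$-scaling) in \emph{both} the linear variance and the uniform fluctuations of $\varphinhat$, so that the bound is robust to the observation regime under only the logarithmic tail Assumption \ref{ass2b} when $p\ge 2$. The former rests on exploiting $\Pn(\{0\})=e^{-\lambdaDn}$ to pull out the factor $\lambdaDn$ in the variance; the latter is exactly the content of the theorem in Section \ref{sec:chfn}. A secondary technical point is to keep the truncation by $U=\Tlambda^{\vartheta_U}$ inactive with the right exponential probability, which follows from combining the sup-bound with the inclusion $\Omegantilde\subseteq\Omegan$ of the preparatory lemma and the band-limitedness Condition \ref{cond:4}.
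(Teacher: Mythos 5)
Your overall architecture (bias + tail + linearisation via the distinguished logarithm + remainder + bad event) matches the paper's, but the treatment of the main stochastic term has a genuine gap, and it sits exactly where the robustness claim lives. First, the variance display is wrong as justified: the Dirac contribution is not ``absorbed by oscillation''. Indeed $g_x(0)=\frac{1}{2\pi\Dn}\int \overline{\tildeKJ{x,\xi}}\,\varphi^{-1}(\xi)\,d\xi\approx e^{\lambdaDn}K_J(x,0)/\Dn$, since $\varphi^{-1}=e^{\lambdaDn}(1-\Dn\FT\nu+\dots)$ and $\frac{1}{2\pi}\int\tildeKJ{x,\xi}\,d\xi=K_J(x,0)$; this is of size $2^J/\Dn$ for $|x|\lesssim 2^{-J}$ and $\norm{K_J(\cdot,0)}_p\sim 2^{J(1-1/p)}$, so the term $n e^{-\lambdaDn}|g_x(0)|^2\sim n2^{2J}/\Dn^2$ dominates $\Tlambda\int|g_x|^2 d\mu$ by a factor $1/(\lambdaDn)$ and destroys precisely the $\Tlambda$-scaling in the high-frequency regime. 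Second, splitting off $(\lambdanhat-\lambda)$ as an additive constant and bounding it separately is both misquantified and lossy: its size is $O_{\Pr}(\lambda\Tlambda^{-1/2})$, not $O_{\Pr}(n^{-1/2})$, and under $\tildeK_J$ a constant becomes the spike $K_J(\cdot,0)$, so this piece alone is of order $\lambda 2^{J(1-1/p)}\Tlambda^{-1/2}$, which is rate-breaking for $p>2$ (for $p=\infty$ it does not even vanish when $s\le 1/2$). The rate can only be saved by exploiting the cancellation between this piece and the zero-increment fluctuation hidden inside $(\Pnhat-\Pn)[g_x]$, which your term-by-term triangle-inequality bounds discard. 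This is exactly what the paper's proof organises via \eqref{eq:tildePn-tildeP1}--\eqref{eq:tildePn-tildeP2}: $\Pnhattilde-\Pntilde$ is written as $\frac{n-n_0}{n_0}$ times the empirical fluctuation of the \emph{nonzero} increments around $\Pn^{J)}$, plus $\frac{\E n_0-n_0}{n_0}$ times the smooth density $\Pn^{J)}$. Conditioning on $n_0$, the binomial part yields the summand $\lambda e^{3\lambda\supnDn}\norm{\mu}_p/\sqrt{\Tlambda}$ (absent from your sketch), while the nonparametric part is a genuine i.i.d.\ empirical process with effective sample size $n-n_0\lesssim\Tlambda$, to which the density-estimation moment bounds are applied (Condition \ref{cond:3} for $p=\infty$, and the weighted constants $M_p$ under Assumption \ref{ass2a} for $p<2$); this, rather than a pointwise variance bound plus interpolation, is what delivers $\E\norm{\cdot}_p\lesssim\lambda\sqrt{2^J/\Tlambda}$ uniformly in the observation regime (Proposition \ref{prop:mainstochastic}).

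Two secondary points. The paper uses Riesz--Thorin only for the remainder term (Proposition \ref{prop:remainder}); interpolating $\E\norm{L}_p$ of a random function from $p\in\{1,2,\infty\}$ is not a direct application of Riesz--Thorin, and for the linear term no interpolation is needed once the conditional i.i.d.\ bounds are used. Also, besides $\Omegan^{\setcomplement}$ (where the estimator is zero), you must control the event inside $\Omegan$ where the linearisation fails: there the estimator does not vanish and is only bounded by $2^JUH^{1/p}$ (resp.\ $2^{J(1-1/p)}U$), so you need the exponential concentration of $\sup_{|\xi|\le\Xin}n|\varphinhat-\varphin|$ at scale $\Tlambda$ (Theorem \ref{thm:varphivarphin:c}, via Lemma \ref{lem:OmegantildesubseteqOmegan} and Lemma \ref{lem:Omegantildecomplement}), not merely the expectation bound; your sketch gestures at this but does not carry it out.
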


The first two terms on the right hand side of \eqref{eq:upperbound} correspond to the usual variance-bias bound. They arise from part of the linear stochastic term and the bias term and, in view of \eqref{eq:2J},  are of order $\varepsilon_{\Tlambda}$. Consequently, Theorem \ref{thm:upperbound} states that we can estimate the L{\'e}vy density at the minimax-optimal rate using a spectral estimator under minimal assumptions: if $\mu=\nu/\lambda$ satisfies Assumption \ref{ass1} for some $p\in [1,2)$ and $s>0$, then already in the standard i.i.d. sampling model Assumption \ref{ass2a} with $\alpha>2/p-1$ and $q=1$ is required to ensure the compactness of $\mathcal{F}_{p,s,\alpha}(B_1,B_2)$  needed 
to show that \eqref{eq:varepsilonm} is the minimax rate and, in such sense, it is necessary
; Assumption \ref{ass2b} is minimal and guarantees convergence of the uniform fluctuations of $\varphinhat$ about $\varphin$ on $|\xi|\leq (2\pi/3) \twoJn$; and, Assumption \ref{ass2a} with $\alpha>2/p-1$ and $q=p\in[1,2)$ allows us to circumvent the fact that the Hausdorff--Young inequality does not hold for the H{\"o}lder conjugate of $p\in[1,2)$ when controlling the non-linearities. The last condition is very mild and, in fact, implied by the first  for reasonable choices of $\mu$: 
let $\mu$ satisfy all the assumptions of Theorem \ref{thm:upperbound} for $p\in[1,2)$ 
and, moreover, assume it is such that, for some $\gamma,M\geq 0$, 
\begin{equation}\label{eq:ass2equiv}
\abs{{\mu}}(x) \lesssim \left(1+|x|\right)^{-\gamma} \qquad \mbox{for all } |x|\geq M;
\end{equation}
then, Assumption \ref{ass2a} implies $\gamma>\alpha+1/q$ necessarily, which is strongest for $q=1$. In turn, we note that, when $\mu$ satisfies these conditions, $\gamma>2/p$ is needed due to 
$\alpha>2/p-1$. 

Theorem \ref{thm:upperbound} assumes knowledge of $\lambda$ only through the dependency of  $J, U$ and $H$ on $\Tlambda$. These quantities  can be chosen to depend on $T$  
as we do in the next section and such change only affects the constants in \eqref{eq:upperbound}. 
Indeed, we write $J, U$ and $H$ in terms of $\Tlambda$ as it results in the multiplicative factor of $\lambda$ in the leading term in \eqref{eq:upperbound} that is natural due to 
$\nu=\lambda \mu$. We also note that the interval to which $\vartheta_H$ in \eqref{eq:Hn} belongs 
is never empty due to $\alpha>2/p-1$ and $s\leq 1+s-1/p$ for all $p\in[1,2)$. 

\begin{rem}\label{rem:ctsminimax}
	We have that $C^{(MS)}:=L_pM_p$, where
	\begin{align*}\label{eq:Lp}
	L_p&:=\left\{
	\begin{array}{ll}
	\left(\frac{ 2(2-p)}{\alpha p-(2-p)}\right)^{1/p-1/2} \norm{ \barK^2}_{1,\alpha}^{1/2}, & \mbox{ if } p\in[1,2),\\
	\norm{\barK}_2, & \mbox{ if } p=2,\\
	(p-1)^{1+1/p} \max\{ 2^{4+3/p}\norm{\barK}_2, 2^{1-1/p}\norm{\barK}_p  \}, & \mbox{ if } p\in(2,\infty),\\
	C_{\infty}= C_{\infty}(\phi), & \mbox{ if } p=\infty, 
	\end{array}	
	\right. \:\mbox{ and,}\\
	M_p&:=\left\{
	\begin{array}{ll}
	\!e^{\lambdasupnDn \left( \norm{ \mu }_{1,\alpha/2}-1\right) }\!\left(  \left(e^{\lambdasupnDn \norm{  \mu}_{1,\alpha}} \!-\!1\right)\!/\!\left(e^{\lambdasupnDn}\! -\!1\right) \right)^{\!1/2}\!, & \mbox{if } p\!\in\![1,2),\\
	\!\one_{[2,\infty)}(p)+\norm{\mu}_{p/2}^{1/2}\one_{(2,\infty)}(p)+1\vee \norm{\mu}_{\infty}^{1/2}\one_{ \{\infty\}}(p)
	, & \mbox{if } p\!\in\![2,\infty]. 
	\end{array}	
	\right.
	\end{align*}	
	Due to $\norm{\mu}_1\!=\!1$ and $\norm{\mu}_p\! \leq\! C \!\norm{\mu}_{B^s_{p,\infty}}$, 
	$\norm{\mu}_{p/2}^{1/2}\!\leq  \! \norm{\mu}_{p}^{\frac{p-2}{2(p-1)}} \!\leq  \!C_p'\norm{\mu}_{B^s_{p,\infty}}^{\frac{p-2}{2(p-1)}}\!\!<\!\infty\,\,\forall p\!\in\![2,\infty]$. 
	
\end{rem}
\subsubsection{Adaptation in the minimax sense}
\label{sec:adaptation}

We now proceed as briefly mentioned right before Theorem \ref{thm:upperbound} and achieve adaptation. In such theorem, when $p\in[1,2)$ the choice 
of $\Hn$ featuring in $\nunhat$ depends on the generally unknown parameter  $\alpha$ and also on $\lambda$. 
To avoid these limitations we may write $H$ in terms of $T$ instead of $\Tlambda$ and strengthen slightly the tail assumptions on $\mu$:  
assume it satisfies Assumption \ref{ass2a} for some $\alpha>2/p-1$ and $q=1$ and also
, e.g., for some $\alpha'>
3/p-3/2$ and $q=p$. 
Then, $
3/p-3/2>2/p-1$ for all $p\in [1,2)$, and we can choose 
\begin{equation*}
H=H_{T}= T^{\vartheta_H}, \qquad \vartheta_H\in 
\left(\frac{ 1}{
	3(1/p-1/2) }\frac{s}{2s+1}, \frac{ 1}{2(1/p-1/2)}\frac{1+s-1/p}{2s+1}\right).
\end{equation*}
If $\mu$ further satisfies \eqref{eq:ass2equiv} together with Assumptions \ref{ass1} and \ref{ass2b}, we have that, in view of the discussion right after \eqref{eq:ass2equiv},  $\gamma>
\max\{2/p,4/p-3/2\}$ 
necessarily
. Therefore, for such jump 
densities, only when $p\in [1,3/4)$ the extra assumptions on $\alpha'$ strengthen the tail condition and, for this case, it is only by adding $
2/p-3/2$. Similarly, to avoid the dependency of $U$ 
on the unknown constant $\lambda$, we may take $U=\UT$ with $\vartheta_U$ as in \eqref{eq:Un}.

A more delicate issue is circumventing the dependency on $s$ of both $2^J=2^{J_T}$ and $H_T$. To achieve this, we first write the latter in terms of the former: note that the $s$-dependent expression for 
$2^{J_T}$ in \eqref{eq:2J} together with the last display suggest that, for $p\in[1,2)$, we may choose
\begin{equation}\label{eq:Hn'}
H=H_{T,J}=\left(\!\sqrt{\frac{T}{2^J}}\:\right)^{\theta_H} 2^{J \theta_H'} = T^{\theta_H/2} 2^{J\left( \theta_H' - \theta_H/2 \right)}, 
\end{equation}
\begin{equation}\label{eq:thetaHthetaH'}
\mbox{where} \quad \theta_H\in 
\left(\frac{ 1}{3(1/p-1/2) }, \frac{ 1}{2(1/p-1/2)}\right)
\quad \mbox{ and } \quad
\theta_H'\in 
\left[0,\frac{ 1-1/p}{2(1/p-1/2)}\right]
;
\end{equation}
we remark that we will not be able to prove our results for all choices of  $ \theta_H$ and $ \theta_H'$ in these intervals and we will need to impose some extra restrictions on them. 
Hence, we are left with choosing $J=J_T$ appropriately without knowledge of $s$. For this, we adapt Lepski\u{i}'s method ---first introduced in \cite{L91}--- to our setting. Due to the fact that the upper bound in \eqref{eq:upperbound} is of the usual form in density estimation plus a negligible term, together with the fact that the integral operators we use 
are of arbitrary order, we proceed almost like in the modification of the classical method introduced in \cite{KNP12}: let $\Jmax\in\N$ be such that
\begin{equation}\label{eq:Jmax}
\frac{T}{\log^{\eta}(T\vee e)} \sim 2^{\Jmax} \leq \frac{T}{\log^{\eta}(T\vee e)} , \quad \mbox{where }\:  
\left\{\!
\begin{array}{ll}
\eta>2/\left(1\!-\!\theta_H\right), & \mbox{if } p=1,\\
\eta>0 & \mbox{if } p\!\in\!(1,\infty),\\
\eta>1, & \mbox{if } p=\infty,
\end{array}	
\right.
\end{equation}
and define $\J:=\{j\in \N: j\in[1,\Jmax]\}$ and 
\begin{equation}\label{eq:twopl}
\twop{l}:=2^l \left(1+ (l+\log T)\one_{\{\infty\}}(p)\right), \qquad l\in\J, p\in[1,\infty];
\end{equation}
then, writing $\nuThat(J)$ for the estimator defined pointwise in \eqref{eq:estimatornu} with $J\in \Nz$ and $ U=U_T$, and, if $p\in[1,2)$, $H=H_{T,J}$ whilst, if $p\in[2,\infty]$, $H=\infty$, we take, for some $\tau>0$ to be determined,
\begin{equation}\label{eq:JThat}
\JThat=\JThat(p,\tau):=\min \left\{ j\in \J\!: \norm{ \nuThat(j) -\nuThat(l)}_p \leq \tau  \sqrt{\frac{\twop{l}}{T}} \:\: 
\forall l >j, \:l\in \J\right\}
\end{equation}
and $\JThat=\Jmax$ if the set is empty
.  
The bound on $\eta$ in \eqref{eq:Jmax} for $p=\infty$ is nearly as in the standard problem of density estimation from i.i.d. random variables, where $\eta=1$. The bound for $p=1$ and $p\in[2,\infty)$ is necessary and particular to the problem at hand: when $p\in[1,1+\epsilon]$ for some $\epsilon\in[0,1)$, the concentration of the non-linear term dominates that of the linear and  $p=1$ is worst, requiring $\eta>2/\left(1-\theta_H\right)$, where we note that $2/\left(1-\theta_H\right)>6$ for $p=1$; when $p\in[2,\infty)$, the condition $\eta>0$ arises from the fact that in the proofs we need that $2^{\Jmax} \leq c(\lambda) \lambda T$, where $c(\lambda)<1$ and we recall that $\lambda$ is unknown. This condition on $2^{\Jmax}$ is intuitive, as it is effectively stating that the number of parameters we can estimate should be no larger than the effective sample size. The bound on $\eta$ for $p\in(1,2)$ is not necessary and can be replaced by assuming $\theta_H'>0$.
%
%
%
%
%


In order to prove that the data-driven estimator $\nuThat(\JThat)$ is adaptive in the minimax sense (cf. discussion before Theorem \ref{thm:upperbound}
), we need to restrict estimation to a compact subclass of $\mathcal{F}_{p,s,\alpha}(B_1,B_2)$.  For any $p\in[1,\infty]$, $s,\alpha, \alpha'>0$, $\beta>1$, $0<\underline{\lambda} \leq \bar{\lambda}<\infty$  
and $B_1, B_2,B_3, B_4\geq 1$, we define $ \mathcal{V}= \mathcal{V}_{p,s,\alpha, \alpha',\beta}\left(\underline{\lambda}, \bar{\lambda}, (B_i)_{i=1}^4\right)$ as 
\begin{align}\label{eq:mathcalV}
& \mathcal{V}:= \left\{\nu=\lambda \mu
\!: \lambda
\in [\underline{\lambda},\bar{\lambda}], 
\mu \in \mathcal{F}_{p,s,\alpha}(B_1,B_2) \mbox{ and, if } p\in[1,2), 
\,\norm{\mu}_{p,\alpha'}\leq B_3, \right. \notag \\
& \qquad \qquad \: \qquad \qquad \qquad \quad \:\mbox{whilst, if } p\in[2,\infty], \, \left. \! \norm{ \log^{\beta} (\max\{|\cdot|,e\}) \mu }_1<B_4 
\right\}.
\end{align}	 
Requiring $\lambda\leq \bar{\lambda}$ guarantees that the compound Poisson process does not have arbitrarily large activity, and $\lambda\geq \underline{\lambda}>0$ that, for $T$ large enough, the  effective sample size is such that $T_{\lambda}\geq 1$ for all $\nu\in\mathcal{V}$, as otherwise it would be arbitrarily small.  Lower bounding $B_1,B_2, B_3, B_4$ by $1$ is natural  due to $\int_{\R}\mu=1$. 

\begin{thm} \label{thm:adaptation}
	
	Adopt the setting and notation introduced in Section \ref{sec:settinganddefs} with $\supnDn <\infty$.  Assume that $\nu\in \mathcal{V}_{p,s,\alpha, \alpha',\beta}\left(\underline{\lambda}, \bar{\lambda}, (B_i)_{i=1}^4\right)$ for some $p\in[1,\infty]$, $s>0$, $\alpha>2/p-1$, $\alpha'>3/p-3/2$, $\beta>1$, $0<\underline{\lambda} \leq \bar{\lambda}<\infty$ and $B_1,B_2, B_4,B_3\geq 1$. 
	Let $\nuThat(\JThat)$ be $\nuThat(J)$ as defined right before \eqref{eq:JThat} with $J=\JThat$ and, 
	for $p\in(1,2)$, suppose that $\theta_H$ and $\theta_H'$ in \eqref{eq:thetaHthetaH'} further satisfy $\theta_H-2 \theta_H' -1\leq 0$ and $\theta_H'<\frac{1-1/p}{2(1/p-1/2)}$.
	Then, if  
	\begin{equation}\label{eq:tau}
	\tau > 
	\frac{1}{\sqrt{\underline\lambda}} 
	+  \max\left\{1,\left(C_2^{(MS)}\right)^2\right\},
	\end{equation} 
	we have that, for $T$ large enough,
	\begin{align}\label{eq:adaptation}
	\sup_{ \nu\in  \mathcal{V}}\E\left[\norm{\nuThat(\JThat) - \nu}_p \right] &\leq 2\left(\tau\sqrt{\bar\lambda} +2 { \bar\lambda}  e^{2\lambdabarsupnDn}   C_1^{(MS)}+1  \right) \left( \bar\lambda  C^{(B)} B_2 \right)^{\frac{1}{2s+1}} \varepsilon_{\Tlambdaunderline} \notag \\
	& \quad +   C^{(o)} o\left(\varepsilon_{\Tlambdaunderline}\right),
	\end{align}
	where 
	$ C_1^{(MS)}, C_2^{(MS)}$ are defined in Remark \ref{rem:cts}, $C^{(B)}$ is as in Theorem \ref{thm:upperbound} and $ C^{(o)}$ is defined in \eqref{eq:Co}. 

\end{thm}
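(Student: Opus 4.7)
The plan is to adapt the modified Lepski\u{i} method of \cite{KNP12} to our setting, with the non-linear stochastic term playing the role of a second, typically dominant, source of fluctuation. Introduce first an oracle resolution level $J^*=J^*(\nu)\in\J$ balancing the bias bound $\norm{K_{J^*}\nu-\nu}_p\lesssim 2^{-sJ^*}$ (uniform in $\nu\in\mathcal V$ by \eqref{eq:boundLrnormKjpernu}) against the threshold in \eqref{eq:JThat} taken at $l=J^*$; this amounts to choosing $2^{J^*}$ comparable to the oracle choice $\twoJn$ of \eqref{eq:2J}, with the roles of $\Tlambda$ and $T$ agreeing up to the factor $\lambda\in[\underline\lambda,\bar\lambda]$. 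Since $\lambda\geq\underline\lambda$ and $2^{J^*}\ll 2^{\Jmax}$ for $T$ large, $J^*\in\J$.

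Take expectations and decompose:
\begin{align*}
\E\norm{\nuThat(\JThat)-\nu}_p &\leq \E\norm{\nuThat(\JThat)-\nuThat(J^*)}_p\one_{\{\JThat\leq J^*\}} + \E\norm{\nuThat(J^*)-\nu}_p \\
&\quad + \E\norm{\nuThat(\JThat)-\nu}_p\one_{\{\JThat>J^*\}}.
\end{align*}
The defining property of $\JThat$ in \eqref{eq:JThat} forces the first term to be $\leq \tau\sqrt{\twop{J^*}/T}$, of the claimed order uniformly over $\mathcal V$; Theorem \ref{thm:upperbound} applied at the level $J^*$ bounds the second term by a constant multiple of $\varepsilon_{\Tlambdaunderline}$, producing the constants $C_1^{(MS)}$ and $C^{(B)}$ appearing in \eqref{eq:adaptation}.

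The bulk of the work is thus to show the third term is $o(\varepsilon_{\Tlambdaunderline})$. Since the truncations $\UT$ and $H_{T,J}$ make $\norm{\nuThat(J)-\nu}_p$ at most polynomial in $T$ almost surely on $\Omegan$, and $\Pr(\Omegan^{c})$ is exponentially small by the discussion after \eqref{eq:defOmegan}, it suffices to prove $\Pr(\JThat>J^*)=O(T^{-M})$ for arbitrarily large $M$. By minimality in \eqref{eq:JThat},
\[
\{\JThat>J^*\}\subseteq\bigcup_{l\in\J,\,l>J^*}\bigl\{\norm{\nuThat(J^*)-\nuThat(l)}_p>\tau\sqrt{\twop{l}/T}\bigr\}.
\]
Inserting $K_{J^*}\nu$ and $K_l\nu$ via the triangle inequality and absorbing $\norm{K_{J^*}\nu-\nu}_p$ and $\norm{K_l\nu-\nu}_p$ into the threshold using \eqref{eq:boundLrnormKjpernu} and the definition of $J^*$, the event in the union forces one of the stochastic residuals $\norm{\nuThat(J^*)-K_{J^*}\nu}_p$ or $\norm{\nuThat(l)-K_l\nu}_p$ to exceed a positive constant multiple of $\sqrt{\twop{l}/T}$. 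Splitting this stochastic error into its linear part (the empirical process $(\Pnhat-\Pn)f$ used in the proof of Theorem \ref{thm:upperbound}) and its non-linear part (driven by $\Log(\varphinhattilde/\varphintilde)$), two concentration inequalities then take over: Talagrand's inequality for the $\norm{\cdot}_p$-norm of the linear empirical process, which concentrates at the parametric $\Tlambda^{-1/2}$ rate thanks to the variance bound $\mathrm{Var}(n(\Pnhat-\Pn)f)=O(\Tlambda)$ exploited throughout the paper; and the concentration result of Section \ref{sec:chfn} for the uniform fluctuations of $\varphinhat$ around $\varphin$, which drives the non-linear piece. Choosing $\tau$ as in \eqref{eq:tau}, where $1/\sqrt{\underline\lambda}$ quantifies the linear contribution and $(C_2^{(MS)})^2$ the non-linear one, makes the two deviation tails summable over the polynomially many $l\in\J$ and yields $\Pr(\JThat>J^*)=O(T^{-M})$ for arbitrary $M$.

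The main technical obstacle is the $p\in\{1,\infty\}$ endpoint behaviour. For $p=\infty$, Talagrand in $L^\infty$ requires the VC structure from Condition \ref{cond:3} and introduces the $\log T$ factor already reflected in $\twop l$ and in $\varepsilon_{\Tlambdaunderline}$; this is routine once the framework of Theorem \ref{thm:upperbound} is in place. For $p=1$, however, the $\norm{\cdot}_1$-norm of the non-linear term does not concentrate exponentially but only polynomially of arbitrary order (via Section \ref{sec:chfn}), and in fact dominates the linear one; this forces the stronger condition $\eta>2/(1-\theta_H)$ in \eqref{eq:Jmax}, since the variance inflation of the uniform fluctuations at resolution $l$ effectively carries an $H_{T,l}^{1-1/p}$ factor that must be controlled relative to $\twop l/T$. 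The restriction $\theta_H-2\theta_H'-1\leq 0$ for $p\in(1,2)$ is precisely what keeps this inflation subdominant at the oracle level $J^*$, while the strict inequality $\theta_H'<(1-1/p)/(2(1/p-1/2))$ ensures that the deterministic tail-truncation error $\lambda\int_{|x|\geq H_{T,l}}\mu(x)\,dx$ is $o(\varepsilon_{\Tlambdaunderline})$ uniformly over admissible $l$. Reconciling these gives \eqref{eq:adaptation} with the stated multiplicative constants.
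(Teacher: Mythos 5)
Your overall route is the one the paper follows: an oracle level $\JTstar$ balancing the bias against the Lepski\u{i} threshold, the split into $\{\JThat\le \JTstar\}$ (handled by the defining property of $\JThat$ plus the non-adaptive analysis at the oracle level, which produces the constants $C_1^{(MS)}$, $C^{(B)}$) and $\{\JThat>\JTstar\}$ (handled by a union bound over $l>\JTstar$, absorbing the bias via \eqref{eq:boundLrnormKjpernu}, and then Talagrand-type concentration for the linear empirical-process part together with the Theorem \ref{thm:varphivarphin}/Corollary \ref{corol:varphivarphin} concentration for the non-linear part, as in Proposition \ref{prop:mainstochasticconcentration}, Lemma \ref{lem:remainderconcentration} and Lemma \ref{lem:JhatT}), with the bad event paid for by the crude polynomial bound coming from the truncations $U_T$, $H_{T,J}$.

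There is, however, one step that fails as stated. You reduce the bad-event term to proving $\Pr(\JThat>\JTstar)=O(T^{-M})$ for \emph{arbitrarily large} $M$, and claim the choice of $\tau$ in \eqref{eq:tau} delivers this. It does not when $p=\infty$: the Talagrand bound for the sup-norm of the linear term at resolution $j$ only gives a tail of size $e^{-\sqrt{M^{(MS)}}(j+\log T)}$, i.e.\ a \emph{fixed} polynomial order $T^{-\sqrt{M}}$ with $M=\tau-1/\sqrt{\underline\lambda}$ finite (the stretched-exponential gain $e^{-\sqrt{M}2^{j/p}}$ available for $p<\infty$ degenerates at $p=\infty$). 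With only a fixed polynomial rate, your ``crude polynomial bound times arbitrarily small polynomial probability'' argument is not enough; the paper instead sums $2^{j}e^{-\sqrt{M}(j+\log T)}$ over $j>\JTstar$, uses $\sqrt{M}\log_2 e>1$ to extract the extra factor $2^{-\JTstar(\sqrt{M}\log_2 e-1)}$, and only then concludes $o(\varepsilon_{\Tlambdaunderline})$ — this is exactly where the requirement $M>1$ in \eqref{eq:tau} is consumed, so the bookkeeping is not optional. Two smaller inaccuracies in your discussion: for $p\in[1,2)$ the remainder term carries the factor $H_{T,l}^{2/p-1}2^{l/p}$ (not $H_{T,l}^{1-1/p}$, which is trivial at $p=1$), and the condition $\theta_H'<\frac{1-1/p}{2(1/p-1/2)}$ is needed so that the non-linear concentration rate $\exp(-C_2^{(R)}T^{\frac12-(\frac1p-\frac12)(2\theta_H'+1)})$ has a strictly positive power of $T$ in the exponent, not to control the deterministic tail-truncation bias (that role is played by $\alpha'\theta_H>1$).
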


In comparison to Theorem \ref{thm:upperbound}, Theorem \ref{thm:adaptation} states that, for $\tau$ large enough, the purely data-driven estimator $\nuThat(\JThat)$ converges to $\nu$ at least as fast as an estimator that knew the regularity $s$ in the model with smallest effective sample size within $\mathcal{V}$. 
For $p\in[1,2)$, a valid choice of $H$ satisfying the additional conditions in Theorem \ref{thm:adaptation} is obtained by taking, e.g.,   $\theta_H=\frac{3}{8}\frac{1}{1/p-1/2}$ and $\theta_H'=\theta_H (1-1/p)$. 

\begin{rem}\label{rem:cts}
	
	We have that $ C_1^{(MS)}:=L_p \bar M_p$, where for $L_p$ and $C'$  as in Remark \ref{rem:ctsminimax},
	\begin{equation}
	\!\!\!\bar M_p\!:=\!\left\{\!
	\begin{array}{ll}
	\!e^{\lambdabarsupnDn \left( B_2-1\right) }\!\left(  \left(e^{\lambdabarsupnDn B_2} \!-\!1\right)\!/\!\left(e^{\lambdabarsupnDn}\! -\!1\right) \right)^{\!1/2}\!, & \mbox{if } p\!\in\![1,2),\\
	\!\one_{[2,\infty)}(p)+C_p'B_1^{\frac{p-2}{2(p-1)}}\one_{(2,\infty)}(p)+1\vee (C_p'B_1^{1/2})\one_{ \{\infty\}}(p)
	, & \mbox{if } p\!\in\![2,\infty], 
	\end{array}	
	\right.  \label{eq:defbarMp} \\
	\end{equation}	
	and
	\begin{align}\label{eq:C2MS}
	C_2^{(MS)}\!:=\!4 
	e^{2\lambdabarsupnDn} \!\left(\frac{3}{2}   \sqrt{\bar\lambda}C_1^{(MS)}\! + \!   \sqrt{ 2 \bar\lambda C B_1\! \norm{\barK}_{2\left(1 \!-\! \frac{1}{1+p}\right)}^2 } \!+\!\frac{14 \norm{\barK}_p   }{3}\sqrt{2\!\left(1
		\!+\! \lambdabarsupnDn\right)\!}\right)\!. 
	\end{align}
\end{rem}

\begin{rem}\label{rem:tau}
	
	Depending on the value of $p$, the lower bound for $\tau$ in \eqref{eq:tau} depends on most or all of the following unknown quantities: on $\underline{\lambda}$ explicitly; and, on $\bar \lambda$, $B_1$ and $B_2$  implicitly through $C_1^{(MS)}$ and $C_2^{(MS)}$ defined in Remark \ref{rem:cts}. Thus, a question 
	not addressed by Theorem \ref{thm:adaptation} is how to choose a minimal $\tau$ to compute $\JThat$ when implementing $\nuThat$ in practice (see Section \ref{sec:implementation} for more details on implementation); this can be achieved by estimating each of the quantities as we conclude at the end of this remark. Prior to it, we discuss their estimation in decreasing order of difficulty. 
	
	The quantity $B_1$ appears explicitly and multiplied by $\bar{\lambda}$ in $C_2^{(MS)}$ and, for $p\in(2,\infty]$, implicitly and multiplied by $\bar{\lambda}$ too through the term
	\begin{equation*}
	\sqrt{\bar\lambda}C_1^{(MS)} \!=\! L_{p}\!\left(\left(\sqrt{\bar\lambda} \!+\!\bar{\lambda}^{\frac{1}{2(p-1)}}C_p'(\bar{\lambda} B_1)^{\frac{p-2}{2(p-1)}}\right)\!\one_{(2,\infty)}(p)\!+\!\sqrt{\bar\lambda}\vee\left(C_p'\left({\bar\lambda} B_1\right)^{\!\frac{1}{2}}\right) \!\one_{ \{\infty\}}(p)\right)\!.
	\end{equation*}
	Note that we arrived at $C_2^{(MS)}$ and $C_1^{(MS)}$ using the inequalities at the end of Remark \ref{rem:ctsminimax}, effectively using $\norm{\nu}_{p} \leq \bar{\lambda} C B_1$ and $\norm{\nu}_{p}^{\frac{p-2}{2(p-1)}}\leq C_p'(\bar{\lambda} B_1)^{\frac{p-2}{2(p-1)}}$
	. Therefore, we may focus on estimating $\norm{\nu}_{p}$ 
	adapting the ideas in \cite{GN09}, where the case $p=\infty$ was considered: for any $p\in[1,\infty]$, we estimate it by $\norm{\nuThat(J')}_{p}$, where $J'$ be such that 
	\begin{equation*}
	2^{J'} \sim \frac{T}{\log^{\eta'}(T\vee e)} \qquad \mbox{for}  \quad \eta'=\eta'(p)>\eta+1 \quad \mbox{and} \quad  \eta=\eta(p) \mbox{ as in \eqref{eq:Jmax}.}
	\end{equation*}
	From Proposition \ref{prop:mainstochasticconcentration}, Lemmata \ref{lem:remainderconcentration} and \ref{lem:OmegantildesubseteqOmegan:b}, and proceeding similarly to the proof of Lemma \ref{lem:JhatT}, a concentration inequality for $\norm{\nuThat(J') - \nu}_{p}$  follows immediately and, for $T$ large enough,  concentration of  $\norm{\nuThat(J') }_{p}$ around the unknown $\norm{\nu}_{p}$ can be inferred. By the same reasoning, the remaining instances of $\bar \lambda$ together with $\underline{\lambda}$ in \eqref{eq:tau} can be 
	substituted by $\lambdanhat$ or the other estimators 
	in Remark \ref{rem:n0=0} which, as mentioned therein, also satisfy a central limit theorem and, moreover, concentration inequalities for all can be shown. Lastly, $B_2$ is only present in the case $p\in[1,2)$ and it depends on moments of $\mu$; if no upper bound for it is available, it can be immediately bounded as in \cite{GN08} 
	by making stronger assumptions such as $\mu$ being compactly supported on, e.g., $[-1/2,1/2]$. 

	Due to the exponential-concentration results for the estimates of $\norm{\nu}_{p}$ and $\lambda$ we just discussed, the proof of Theorem \ref{thm:adaptation} goes through with the resulting random choice of $\tau$ too (multiplied by a constant larger than $1$ due to the strict inequality in \eqref{eq:tau}); we presented it with the bounds for $\norm{\nu}_{p}$ and $\lambda$ known 
	for simplicity. We remark that the lower bound in \eqref{eq:tau} does not depend on $B_3$ or $B_4$ and, thus, our data-driven estimator does not require knowledge of these. It follows that when $p\in[2,\infty]$ we need no knowledge of the tails of $\mu$: for $p=2$, it agrees with the rest  of existing literature on adaptive estimation of  L{\'e}vy processes observed discretely in the low-frequency regime; and, for  $p=\infty$, which is the most statistically-relevant case, it means that our estimator is the first purely data-driven one optimally and uniformly estimating $\nu$ in the low-frequency regime.  The case $p=1$ is the natural framework for probability densities but is less relevant in practice, similarly to $p\in(1,2)$ and $p\in(2,\infty)$. 
	We include them because they have not been studied in existing literature using the spectral approach and, therefore, the proofs have theoretical value. In particular, they allow us to be the first to report the  phenomenon of the concentration of the non-linearities dominating over that of the linearities when $p=1$. Moreover, for $p\in[1,2)$ they can be used to derive optimal Bayesian contraction rates via the concentration-of-measure approach developed in \cite{GN11}.

\end{rem}

%
%

\subsubsection{Unification of existing literature on the high- and low-frequency regimes and resulting insights
}\label{sec:unification}

Whilst our starting point \eqref{eq:identityLK2} is valid for any $\Dn>0$, 
if $\lambdaDn<\pi$ then $\FT \nu=\frac{1}{\Dn} \log(e^{\lambdaDn}\varphi)$, where $\log$ is the principal branch of the complex logarithm. Note, furthermore, that if $\lambdaDn<\log 2$, then $\abs{e^{\lambdaDn}\varphi-1}<1$ and $\log$ admits the usual power series expansion. Consequently, taking the Fourier inverse on both sides and using that 
\begin{equation*}
e^{\lambdaDn}\varphi\!-\!1\!=\!\FTT{e^{\lambdaDn}P\!-\!\delta_{0}}\!=\!( e^{\lambdaDn}\!-\!1)\FT\! \Pn^{J)}\!, \,\,\, \mbox{where}\,\, \Pn^{J)}\!:=\! \frac{ P\!-\!e^{-\lambdaDn}\delta_{0}}{ 1-e^{-\lambdaDn}}\!=\!\frac{\sum_{i =1}^{\infty}\frac{\Dnsuper{i} \nu^{\ast i}}{i!}}{e^{\lambdaDn}-1}
\end{equation*}
is the distribution of $Y_{\Dn}|_{Y_{\Dn}\neq 0}$, we obtain that, under very mild assumptions on $\nu$, 
\begin{equation}\label{eq:repnu}
\frac{1}{\Dn} \FII{\Log(e^{\lambdaDn}\FT P)}=\nu =  
\sum_{i =1}^{\infty} \frac{(-1)^{i-1} (e^{\lambdaDn}-1)^i}{\Dn i} \big( \!\Pn^{J)}\!\big)^{\ast i}.  
\end{equation}
Moreover, similar arguments, including
\begin{equation*}
\varphinhattilde\!-1\!=\! \FTT{e^{\lambdanhatDn}\Pnhat\!-\delta_{0}}
\!=\!(e^{\lambdanhatDn}\! -\!1)\FT\widehat{\Pn}_{n}^{J)}\!, \,\,\, \mbox{where} \,\, \widehat \Pn_{n}^{J)}\!\!:=\!\frac{\sum_{Z_i\neq 0} \delta_{Z_i}}{\sum_{i=1}^n\! \one_{Z_i\neq0}}\!=\!\frac{1}{n\!-\!n_0}\!\sum_{i=1}^{n-n_0
}\! \delta_{Z_i^{J)}}
\end{equation*}
is the empirical distribution of the non-zero increments $Z_1^{J)}, \ldots, Z_{n-n_0}^{J)}$, and Plancherel theorem imply that, for $\lambda\Dn<\log 2$ and with $\Pr$-probability approaching $1$ as $n\to \infty$, 
\begin{equation}\label{eq:repnuhat}
\frac{1}{2\pi}\tildeKJ{ \frac{\Log \varphinhattilde}{\Dn} }
= \sum_{i =1}^{\infty} \frac{(-1)^{i-1} (e^{\lambdanhatDn}-1)^i}{\Dn i}\KJ{\left( \widehat\Pn^{J)}_{n}\right)^{\ast i}}.
\end{equation}
Using the left side to construct estimators is generally referred to as the spectral approach, which, until now, has been considered as a different approach from building estimators using the right side (cf. p. 673 in \cite{VEGS07}
, p. 3964 in \cite{D13} 
and p. 165 in \cite{CDGC14}
). These derivations together with our unified treatment of integral operators show that they are in fact equal and provide a unification of all existing literature on non-Bayesian estimation of discretely observed compound Poisson processes. 

Several insights follow. Clearly, the spectral approach uses all the small-time approximations corresponding to truncations of \eqref{eq:repnu} up to $i\leq I$, $I\in \N$, to the extent that it gives a well-defined generalisation of the representation of $\nu$ in terms of $\Pn^{J)}$ for any low-frequency setting. Indeed, ignoring all the adaptation procedures, the estimator of $\nu$ in Comte et al. \cite{CDGC14} is a small-time approximation to that in van Es et al. \cite{VEGS07} which\footnote{They assumed $\lambda$ known so did not use $\lambdanhat$, but the corresponding expressions follow immediately.}, in turn, is a special case of our estimator taking a convolution operator with finite order; assuming $\nu$ has a density, the estimators for its distribution in Coca \cite{C18} and Buchmann and Gr{\"u}bel \cite{BG03} are the integral of \cite{VEGS07} and its `non-regularised' version---which explains all the differences in their results (cf. Chapter 2 in Coca \cite{C17} for more details)---; and, the estimator of $\nu$ in Duval \cite{D13} is a small-time approximation of a particular case of our estimator taking a compactly supported wavelet projection operator. As a consequence of the last link, we conjecture that a thresholding strategy to adaptation  using the spectral approach should work.

More refined insights are the following. In \cite{D13} and \cite{CDGC14}, and in \cite{BG03}, they show that their estimators attain optimal rates assuming $\Dnsuper{2I+1}n\to 0$ and $\lambdaDn<\log 2$, respectively. Thus, the unification above gives a transparent reason as to why our spectral estimator is optimal in and robust to the high- and low- frequency regimes. In fact, it implies the same properties for the estimators in \cite{VEGS07} and \cite{C18} too. Indeed, in  \cite{C17} we argued that in the high-frequency regime, the estimator in  \cite{C18} satisfies a functional central limit theorem with the same limiting Gaussian process as in classical Donsker's theorem; the latter is optimal for the i.i.d. sampling model. We can also link the usual estimators in such model with all the above estimators in the model herein for $\Dn \to 0$: 
from \eqref{eq:repnuhat} it follows that, with $\Pr$-probability approaching $1$ as $n\to \infty$, 
\begin{equation*}
\frac{1}{2\pi}\tildeKJ{\frac{\Log \varphinhattilde}{\Dn} }
= \lambdanhat  \KJ{\widehat \Pn_{n}^{J)}}	
+  O(\Dn), 
\end{equation*}
and we conclude that, as $\Dn \to 0$ and $n\to \infty$, the leading term in all the abovementioned estimators 
is $\lambdanhat$\footnote{Or, in line with the previous footnote, $\lambda$ instead of $\lambdanhat$ in \cite{VEGS07} and\cite{BG03}.} times the usual i.i.d. integral operator-based 
estimator. Consequently, in the high-frequency regime spectral estimators and their small-time approximations should recover all aspects of the i.i.d. sampling model. This  agrees with the fact that the first line in \eqref{eq:upperbound} corresponds to the usual variance-bias bound and, dividing by $\lambda$ and letting $\supnDn \to 0$, we obtain the constants of i.i.d. density estimation (cf. Remark 5.1.6 in \cite{GN16}); the resulting rate is the minimax one for a sample of size $\Tlambda$, which is the natural quantity asymptotically as $\Tlambda$ is the expected number of jumps $Y$ gives in $[0,T]$. 

\subsubsection{Robustness to other sampling schemes and potential implications 
}\label{sec:discussionDn}

Theorems \ref{thm:upperbound} and \ref{thm:adaptation} assume $\supnDn<\infty$ in order to guarantee convergence at the optimal rates $\varepsilon_{\Tlambda}$ and $\varepsilon_{T_{\underline{\lambda}}}$, respectively, for the high- and low-frequency regimes simultaneously. However, convergence at slower rates still holds if $\Dn\to \infty$ slowly enough: if $\Dn\sim c \log \log T$ for some $c>0$ only a logarithmic penalty is paid, whilst if $\Dn\sim c \log T$ for some $c>0$ the deterioration is polynomial for $c$ small enough and consistency no longer holds for $c$ beyond a certain threshold; for $p=2$ this is at most $c\leq 1/(4 \lambda)$, 
as recently reported in Duval and Kappus \cite{DK18} through a different methodology ($\lambda=1$ known therein). This `slow macroscopic  regime' has not been studied in related literature and, thus, we ignore what is the optimal critical threshold $c$ to ensure consistency or whether the resulting slower rates are suboptimal. In a  `fast macroscopic  regime' in which  $\Dn\sim  T^{-c}$ for some $c\in(0,1)$ our estimator is not consistent, which agrees with the fact showed by  Duval \cite{D14} that in such setting it is not possible to estimate $\nu$ nonparametrically. 

A potential implication that has not yet been reported is the following. In our calculations, the quantity $\Dn$ is (nearly) always multiplied by $\lambda$ and this allows us to write the rates of converge in terms of the natural quantities $\Tlambda$ and $T_{\underline{\lambda}}$. Therefore, an alternative view of the properties discussed in the previous paragraph is that, if $\supnDn<\infty$ or even $\Dn\to \infty$ sufficiently slowly, consistency is still guaranteed by our results when $\lambda=\lambda (T)\to \infty$ slowly enough. In addition, note that, assuming $\Dnsuper{2}n\to 0$, Duval and Mariucci \cite{DM17} built a non-adaptive density estimator for general L{\'e}vy processes exploiting their construction as limits of compound Poisson processes: using the  increments with absolute value larger than $\epsilon_T\to 0$, they estimate the L{\'e}vy density by estimating its restriction to $[-\epsilon_T, \epsilon_T]^{\setcomplement}$ regarding it a compound Poisson processes with intensity $\lambda=\lambda(\epsilon_T)\to \infty$. Hence, these remarks suggest that a similar strategy may allow to extend our results to construct robust estimators that are optimal in losses other than $\Ltwo$ for more general classes of L{\'e}vy processes without polynomial-tail assumptions. 

\subsection{Control of the empirical characteristic function for arbitrary frequencies}\label{sec:chfn}

Recall that, besides $\lambdanhat$, the source of randomness in $\nunhat$ defined in \eqref{eq:estimatornu} 
is the empirical characteristic function $\varphinhat(\xi)$ restricted to $|\xi|\leq \Xin$. Indeed, in the proofs of Theorems \ref{thm:upperbound} and \ref{thm:adaptation}, we  need to control $\sup_{|\xi|\leq  \Xin } |\varphinhat(\xi) - \varphin(\xi)|$ appropriately and the following theorem, which is of independent interest, contains the precise results we need.

\begin{thm}\label{thm:varphivarphin}
	Adopt the setting and notation introduced in Section \ref{sec:settinganddefs}. 
	Let $\varphin$ and $\varphinhat$ be as in \eqref{eq:repvarphiintro} and \eqref{eq:varphinhat} with $k=0$, respectively. Then, for any $n\in \N$ and $\Xi'\geq 0$,
	\begin{enumerate}		[ref=\thethm \,(\alph*)]
		
		\item \label{thm:varphivarphin:a}
		if $a:=\int_{\R} \log^\beta (\max\{|x|,e\}) \mu(dx)<\infty$ for some $\beta>1$, we have that for any 
		$\delta>0$,
		\begin{equation*}
		\E \sup_{|\xi|\leq \Xi'} n|\varphinhat(\xi) - \varphin(\xi)|  \leq C  \sqrt{\Tlambda \log^{1+2\delta}(e+\Xi')},
		\end{equation*}
		where, for some  universal constants $b,C_1, C_2>0$,
		\begin{equation*}
		C:= \left(C_1 +	C_2 \left( {  \left(e^{a b^\beta \lambdaDn}-1\right)}/{\left(b^\beta \left(e^{\lambdaDn}-1\right)\right)} \right)^{\frac{1}{2\beta}}\right) \sqrt{\min\{1/(\lambdaDn), 1\}},
		\end{equation*}
		and we remark that the quantity $a$ does not depend on $\lambda$ or $\Dn$;
		
		\item \label{thm:varphivarphin:b}
		we have that for all  $t\geq 0$
		\begin{align*}
		\Pr&\left(\sup_{|\xi|\leq \Xi'}n|\varphinhat(\xi) - \varphin(\xi)| \geq 2 E\sup_{|\xi|\leq \Xi'}n|\varphinhat(\xi) - \varphin(\xi)| + t \right) \\
		&\leq 2  \exp\left(-\frac{ t^2}{16E\sup_{|\xi|\leq \Xi'}n|\varphinhat(\xi) - \varphin(\xi)| + 32\Tlambda+ 8t/3 }\right);  \quad \mbox{and},
		\end{align*}
		
		\item \label{thm:varphivarphin:c}
		suppose Assumption \ref{ass2b} is satisfied for some $\beta>1$, $\log^{1+2\delta}(e+ \Xi') \leq c \Tlambda$ and $e+\Xi' \geq 2^{\frac{1}{4L(1/(2\sqrt{c}) -2C)^2}}$ for some $\delta >0$ and $0<c< (4 C)^{-2}$, where  $C$ is as in part (a) and $L=3/464$; it follows that if $\sqrt{\log^{1+2\delta}(e+\Xi') /(c \Tlambda) } \leq  r\leq 1$, 	
		\begin{equation}\label{eq:concentrationsupr}
		\Pr\left(\sup_{|\xi|\leq \Xi'}n|\varphinhat(\xi) - \varphin(\xi)|  > \Tlambda r \right) \leq  e^{- L \Tlambda r^2  } .
		\end{equation}

	\end{enumerate}
	
\end{thm}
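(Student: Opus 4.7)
The plan is to express $n(\varphinhat(\xi)-\varphin(\xi))=\sum_{j=1}^n (f_\xi(Z_j)-\E f_\xi(Z_j))$ as a centred empirical process indexed by the class $\mathcal{F}:=\{f_\xi(z):= e^{\twopii\xi z}: |\xi|\leq \Xi'\}$, which has envelope $1$ and centred envelope $2$. The key computation driving the $\sqrt{\Tlambda}$-scaling (instead of the naive $\sqrt{n}$) throughout is
\begin{equation*}
\sup_{\xi\in\R} \E|f_\xi(Z_1) - \varphin(\xi)|^2 = \sup_{\xi\in\R}(1-|\varphin(\xi)|^2) \leq 1-e^{-4\lambdaDn} \leq 4\Tlambda/n,
\end{equation*}
obtained from $|\varphin(\xi)|^2 = \exp\{-2\Dn(\lambda-\Re\FT \nu(\xi))\}$ together with $|\FT \nu|\leq \lambda$; equivalently $\mathrm{Var}(\sum_j f_\xi(Z_j))\leq 4\Tlambda$ uniformly in $\xi$.

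Part (b) then follows from a direct application of Bousquet's version of Talagrand's inequality to the real and imaginary parts of $\mathcal{F}$. The envelope $2$ accounts for the $8t/3$ Bernstein-type term in the denominator, the variance bound just noted contributes the $32\Tlambda$ after the standard factor of $8$ together with the two-real-class doubling, and the Bousquet linearisation around $2\E\sup$ supplies the $16\E\sup$; the leading $2$ outside the exponential absorbs the two-sided probability.

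For part (a) I would follow the weighted-supremum approach pioneered in \cite{KR10}: with $w(\xi):=\log^{-(1/2+\delta)}(e+|\xi|)$ and $\|g\|_w:=\sup_{\xi\in\R}w(\xi)|g(\xi)|$, one has $\sup_{|\xi|\leq\Xi'}|g|\leq \log^{1/2+\delta}(e+\Xi')\cdot \|g\|_w$, so it suffices to bound $\E\|n(\varphinhat-\varphin)\|_w$ by $C\sqrt{\Tlambda}$ times an appropriate constant. Symmetrising to a Rademacher process and chaining in $L^2(\Pnhat)$ on $\mathcal{F}$, with modulus of continuity $|f_{\xi_1}-f_{\xi_2}|(Z)\leq \min\{2,2\pi|\xi_1-\xi_2||Z|\}$, one truncates at $|Z|\leq M$: Dudley's entropy integral combined with the variance bound above delivers the main term of the right order, the factor $\sqrt{\min\{1/(\lambdaDn),1\}}$ emerging from the effective per-summand variance under the weight being $\min\{1,4\Tlambda/n\}$. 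The untruncated tail is at most $2n\Pr(|Z_1|>M)$, controlled by Markov applied to the $\beta$-log moment of $\Pn$; expanding $\int\log^\beta(\max\{|z|,e\})\Pn(dz)$ through the Poisson series for $\Pn$ and the elementary inequality $\log^\beta(|x_1+\cdots+x_i|\vee e)\leq b^\beta i\sum_j\log^\beta(\max\{|x_j|,e\})$ for a universal constant $b$ yields precisely the ratio $(e^{ab^\beta\lambdaDn}-1)/(b^\beta(e^{\lambdaDn}-1))$ appearing inside $C$, and choosing $M$ polylogarithmic in $\Xi'$ renders the tail contribution admissible.

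Part (c) is then obtained from (a) and (b) by setting $t:=\Tlambda r - 2\E\sup$ in the bound of (b): under the hypotheses of (c), part (a) gives $2\E\sup\leq 2C\sqrt{c}\,\Tlambda<\Tlambda r /2$ (since $c<(4C)^{-2}$ and $r\geq\sqrt{\log^{1+2\delta}(e+\Xi')/(c\Tlambda)}$), so $t\geq\Tlambda r/2$; the lower bound on $\Xi'$ ensures $16\E\sup$ is dominated by $32\Tlambda$, and straightforward bookkeeping yields $t^2/(16\E\sup+32\Tlambda+8t/3)\geq L\Tlambda r^2$ with $L=3/464$. The hard part is part (a): executing the chaining under only a logarithmic moment of $\mu$ forces a delicate three-way balance between the truncation level $M$, Dudley's entropy integral, and the weight exponent $1/2+\delta$, and is responsible both for the exponent $1+2\delta$ of the logarithm in the final bound and for the intricate $\lambdaDn$-dependent form of $C$ that guarantees no deterioration of the parametric-like rate as $\Dn\to 0$.
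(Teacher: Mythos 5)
Parts (b) and (c) of your proposal coincide with the paper's argument: the paper also splits $n(\varphinhat-\varphin)$ into real and imaginary parts, uses $\E W_i(\xi)^2+\E W_i'(\xi)^2=1-|\varphin(\xi)|^2\leq\min\{1,4\lambdaDn\}$, applies Bousquet's form of Talagrand's inequality to each part (after passing to a countable dense set of frequencies), and proves (c) by taking $t=\Tlambda r-2\E\sup$ exactly as you do. Only your bookkeeping glosses differ slightly: the prefactor $2$ comes from the union bound over the two real-valued classes, not from two-sidedness, and the lower bound on $e+\Xi'$ is used to absorb that prefactor into the exponential, not to compare $16\E\sup$ with $32\Tlambda$ (the latter already follows from $c<(4C)^{-2}$).

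The genuine issue is part (a), where your route differs from the paper's and, as sketched, has a gap. The paper does not redo any chaining: it splits the sample into zero and non-zero increments, writes $\varphinhat-\varphin$ as a parametric term driven by $n_0-\E n_0$ plus $\frac{n-n_0}{n}$ times the empirical characteristic-function process of the $n-n_0$ non-zero increments, applies the existing weighted-supremum maximal inequality (Theorem 1 of \cite{KR10}, refined as in Theorem 3.1 of \cite{C18}) \emph{conditionally} at sample size $m=n-n_0$, computes $\E\log^\beta(\max\{|\ZJ{1}|,e\})$ for the conditional jump law $\Pn^{J)}$ via the multiplicative submultiplicativity bound $\log(\max\{|x+y|,e\})\leq b\,\log(\max\{|x|,e\})\log(\max\{|y|,e\})$ and Young's inequality, and finally deconditions with $\E\sqrt{n-n_0}\leq\sqrt{\Tlambda\min\{1/(\lambdaDn),1\}}$; this last step, not a variance-weighted entropy integral, is where the acceleration factor in $C$ comes from, and the conditional normalisation is where the denominator $e^{\lambdaDn}-1$ comes from. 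Your plan to re-derive a Bernstein-sensitive chaining bound for the full $n$-sample is morally plausible (every function $f_\xi-1$ vanishes at $Z=0$, so all relevant variances are $O(\min\{1,\lambdaDn\})$), but the recipe you give does not close: under only a logarithmic moment, a single truncation at a level $M$ polylogarithmic in $\Xi'$ leaves a tail contribution of order $n\Pr(|Z_1|>M)\lesssim \Tlambda/\log^\beta M$ (times the jump-law log-moment), which is not $O\big(\sqrt{\Tlambda\log^{1+2\delta}(e+\Xi')}\big)$ uniformly in $n$ and $\Xi'$, while enlarging $M$ enough to fix this inflates the entropy integral beyond the target order; the proofs in \cite{NR09}, \cite{KR10} and \cite{C18} need frequency-block-dependent truncations precisely for this reason, and the variance-sensitive version of that argument is asserted in your sketch rather than carried out. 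Two further slips: your submultiplicativity inequality is additive where the multiplicative form is needed to generate the exponential series $e^{ab^\beta\lambdaDn}-1$, and the relevant log-moment is that of $\Pn^{J)}$, not of $\Pn$, so expanding the Poisson series for $\Pn$ directly would not produce the constant $C$ as stated.
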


Notice that $C$ defined in Theorem \ref{thm:varphivarphin:a} depends on $\Dn$. Under the assumption that $\supnDn <\infty$, 
and due to $e^x-1\geq x$ for any $x\geq 0$ together with $(e^{c  x} -1)/ x$ being increasing for any $c\geq 0$ fixed, we have that
\begin{equation}\label{eq:CleqbarC}
C\leq \bar C := \left(C_1 +	C_2 \left( {  \left(e^{a b^\beta \lambda \supnDn  }-1\right)}/{\left(b^\beta \lambda \supnDn\right) } \right)^{\frac{1}{2\beta}}\right).
\end{equation}
Consequently, Theorem \ref{thm:varphivarphin:a} and Theorem \ref{thm:varphivarphin:c}  hold for $\bar C$ in place of $C$, and from now on we interpret them as if they were written in terms of $\bar C$. Then, in the high-frequency regime and for the compound Poisson setting, Theorem \ref{thm:varphivarphin:a} crucially improves upon existing related results (cf. Theorem 1 in \cite{KR10}, Theorem 4.1 in \cite{NR09} and Theorem 3.1 in \cite{C18}), which guarantee that
\begin{equation*}
\E \sup_{|\xi|\leq \Xi'} n|\varphinhat(\xi) - \varphin(\xi)|  \lesssim \sqrt{n \log^{1+2\delta}(e+\Xi')}.
\end{equation*}
The fact that this bound can be improved when $\Dn\to 0$ 
should be intuitive: as the  sample size  increases, the expected number of jumps the compound Poisson process gives between every discrete observation decreases, and the estimation problem becomes easier. Theorem \ref{thm:varphivarphin:a} makes this precise by stating  that the order of the supremum improves  at least by a factor of $\sqrt{\Dn}$ as $\Dn\to 0$ compared to that predicted by the  usual maximal inequalities; such factor is natural once one notices that the variance satisfies
\begin{equation*}
\E \left(n|\varphinhat(\xi) - \varphin(\xi)|\right)^2 \leq 2 \min\{1,4\lambdaDn\} , \qquad \xi \in \R,
\end{equation*}
which is shown in the course of the proof of Theorem \ref{thm:varphivarphin:b}. To take advantage of this acceleration of the variance when proving concentration of $\sup_{|\xi|\leq \Xi'} n|\varphinhat(\xi) - \varphin(\xi)| $ around its expectation, we use Talagrand's inequality. This results in the term $32\Tlambda$ on the right hand side of Theorem \ref{thm:varphivarphin:b}, which would otherwise be of the worse order $n$. The improvement of this term is, in turn, necessary to obtain Theorem \ref{thm:varphivarphin:c} which, compared to the usual concentration inequalities is, essentially, optimal. To the best of our knowledge, the three results in Theorem \ref{thm:varphivarphin} are new to the literature, and they are necessary to prove  robustness of spectral estimators to the low- and high-frequency regimes without tail assumptions and their optimality in such setting. They are also crucial to adapt Lepski\u{i}'s method to our problem in order to show Theorem \ref{thm:adaptation}.

Recall that estimator $\nunhat$ has non-trivial values in $\Omegan$ defined in \eqref{eq:defOmegan}. Then, we will have to control $\sup_{|\xi|\leq \Xi'} n|\varphinhat(\xi) - \varphin(\xi)| $, where $\Xi' = \Xin$ and $2^J=\twoJn=o(\Tlambda)$ is as in \eqref{eq:2J}. Furthermore, when applying Lepski\u{i}'s method we will need to control this quantity for $2^J\leq 2^{J_{\max}}\leq T$.  This follows immediately from Theorem \ref{thm:varphivarphin:c}.

\begin{corol}\label{corol:varphivarphin}
	Adopt the setting and notation introduced in Section \ref{sec:settinganddefs} with $\supnDn <\infty$. 
	Then, if Assumption \ref{ass2b} is satisfied for some $\beta>1$, $J=J(\Tlambda)\to \infty$ with $2^J=O(T_\lambda)$ and $\sqrt{\log^{1+2\delta}(e+2^J) / \Tlambda } \leq  r\leq 1$ for some $\delta>0$
	, we have that, for $L$ as in Theorem \ref{thm:varphivarphin:c} and $\Tlambda$ large enough,
	\begin{equation*}
	\Pr\left(\sup_{|\xi|\leq\Xin}n|\varphinhat(\xi) - \varphin(\xi)|  > \Tlambda r \right) \leq  e^{- L \Tlambda r^2  } .
	\end{equation*}	
\end{corol}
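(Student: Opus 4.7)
The plan is to derive the corollary as a direct application of Theorem~\ref{thm:varphivarphin:c} with the choice $\Xi'=\Xin=2^J\Xi$, and with $\bar C$ from \eqref{eq:CleqbarC} playing the role of $C$, which is legitimate because $\supnDn<\infty$, as the discussion after Theorem~\ref{thm:varphivarphin} records. The only remaining task is to verify, for $\Tlambda$ sufficiently large, the three numerical hypotheses of that theorem, and the one delicate point will be the mild mismatch between the logarithmic exponent $1+2\delta$ appearing in the hypothesis on $r$ of the corollary and the analogous exponent one needs for the parent theorem.

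First I would fix some $\delta'\in(0,\delta)$ and some $c\in\bigl(0,(4\bar C)^{-2}\bigr)$, which will serve as the constants in the application of Theorem~\ref{thm:varphivarphin:c}. The first hypothesis $\log^{1+2\delta'}(e+\Xi')\le c\Tlambda$ follows immediately from $2^J=O(\Tlambda)$: this yields $\log(e+2^J\Xi)=O(\log\Tlambda)$ and hence $\log^{1+2\delta'}(e+2^J\Xi)=o(\Tlambda)$, so the inequality holds for $\Tlambda$ large. The second hypothesis $e+\Xi'\ge 2^{1/(4L(1/(2\sqrt{c})-2\bar C)^2)}$ holds eventually because $J(\Tlambda)\to\infty$ forces $e+2^J\Xi\to\infty$.

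For the third hypothesis we must check that the assumed lower bound $r\ge\sqrt{\log^{1+2\delta}(e+2^J)/\Tlambda}$ implies the bound $r\ge\sqrt{\log^{1+2\delta'}(e+\Xi')/(c\Tlambda)}$ demanded by Theorem~\ref{thm:varphivarphin:c}. This is where the choice $\delta'<\delta$ is used: since $\Xi$ is a fixed constant independent of $J$,
\[
\frac{\log^{1+2\delta'}(e+2^J\Xi)}{c\,\log^{1+2\delta}(e+2^J)}\;\longrightarrow\; 0 \quad\text{as } J\to\infty,
\]
so for $\Tlambda$ large enough $\log^{1+2\delta'}(e+2^J\Xi)/c\le\log^{1+2\delta}(e+2^J)$ and the required inequality follows. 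Combined with $r\le 1$, which is assumed, all hypotheses of Theorem~\ref{thm:varphivarphin:c} are verified, and its conclusion is precisely the bound claimed in the corollary. No genuine obstacle appears; the only subtlety is the exponent adjustment $\delta\rightsquigarrow\delta'$ just carried out to absorb the constant $c$ and the factor $\Xi$ into the logarithmic slack.
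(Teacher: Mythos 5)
Your proposal is correct and follows exactly the route the paper intends: the paper offers no separate proof, simply asserting that the corollary "follows immediately from Theorem \ref{thm:varphivarphin:c}" (interpreted with $\bar C$ in place of $C$ thanks to $\supnDn<\infty$), and your verification of its three hypotheses for $\Xi'=\Xin$ and $\Tlambda$ large is the natural fleshing-out of that assertion. The one genuinely delicate point---absorbing the constant $c$ and the fixed factor $\Xi$ by passing to an exponent $1+2\delta'$ with $\delta'<\delta$---is handled correctly.
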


\section{Proofs}
\label{sec:proofs}

In Section \ref{sec:proofs:prepresults} we give some preparatory results to show Theorems  \ref{thm:upperbound} and  \ref{thm:adaptation}, which are proved in Sections  \ref{sec:proofs:upperbound} and \ref{sec:proofs:adaptation}. Lastly, we show Theorem \ref{thm:varphivarphin} in Section \ref{sec:proofs:varphivarphin}. Throughout we adopt the setting and notation introduced in Section \ref{sec:settinganddefs}.

\subsection{Preparatory results}\label{sec:proofs:prepresults}

Let us introduce some additional notation: recalling that the meaning of $\rightbarU{z}$ was introduced right after \eqref{eq:estimatornu}, we define, for any complex-valued function $f$ and any $U\geq 0$,  
\begin{equation*}
\rightbarHU{f}:= \left\{\begin{array}{ll}
\rightbarU{f}\one_{[-H,H]}, & \mbox{if } H\geq 0,\\
\rightbarU{f}, & \mbox{if } H=\infty,
\end{array} 
\right.
\qquad \mbox{and} \qquad \rightbarH{f}:=\rightbar{f}{H}{\infty}; 
\end{equation*}
then, in view of \eqref{eq:estimatornu}, on $\Omegan$ we write $\hat{\nu}=(2\pi)^{-1}\!\rightbarH{\tildeKJ{ \rightbarU{\left( \Dnsuper{-1}\Log \varphinhattilde\right)}}}$.
Until next section we keep writing $\hat{\nu}$ instead of $\nunhat$ or $\nuThat$. 
To prove both Theorem  \ref{thm:upperbound} and  \ref{thm:adaptation}, we split $\hat{\nu}-\nu$ into a stochastic and a bias term, and, due to the non-linear nature of the problem and to the need to set $\hat{\nu}\equiv 0$ on $\Omegan^{\!\!\!\!\setcomplement}$ defined in \eqref{eq:defOmegan}, two non-standard terms appear. 
To follow this strategy we first introduce the sets 
\begin{equation*}\label{eq:defOmegatilden}
\Omegantilde\left(2^J, r\right)\!:=\! \left\{ \omega \!\in\! \Omega\!: \! \sup_{|\xi|\leq \Xin}\!n \left|\varphinhat(\xi)\! -\! \varphin(\xi)\right| \!\leq\! \Tlambda r \, \, \mbox{ and } \,\,n\left|e^{\lambdanhatDn} \!-\! e^{\lambdaDn}\right| \!\leq \! e^{\lambdaDn} \!\Tlambda r \right\}\!.
\end{equation*}

\begin{lem}\label{lem:OmegantildesubseteqOmegan}
	We have that,
	\begin{enumerate}[ref=\thethm \,(\alph*)]
		\item for any $2^J,r\geq 0$ and $n\in \N$,
		\begin{equation*}
		\Omegantilde\!\left(2^J\!, r\right)\!\subseteq\! \left\{ \omega \!\in\!\Omega:\! \sup_{|\xi|\leq \Xin}\!n \left|\varphinhattilde(\xi)\!-\!\varphintilde(\xi)\right| \leq 2 e^{\lambdaDn}\Tlambda r \right\} \,\,\, \mbox{and} \,\,\,\, \Omegantilde\left(2^J\!, r\right)\!\subseteq \Omegan;
		\end{equation*}\label{lem:OmegantildesubseteqOmegan:a}
		
		\item  and, if Assumption \ref{ass2b} is satisfied for some $\beta>1$, $J=J(\Tlambda)\to \infty$ with $2^J=O(\Tlambda)$, 
		$\sqrt{\log^{1+2\delta}(e+2^J) / \Tlambda } \leq r\leq 1$ for some $\delta>0$, and $\supnDn <\infty$, we have that, for $\widetilde{L}:=Le^{-3\lambda \supnDn}$, with $L$ as in Theorem \ref{thm:varphivarphin:c}, and any $\Tlambda$ large enough,
		\begin{equation*}
		\Pr\left(\Omegantildecomplement\left(2^J, r\right)\right) \leq  3 e^{- \widetilde{L} \Tlambda r^2  } .
		\end{equation*}	\label{lem:OmegantildesubseteqOmegan:b}
	\end{enumerate}	
\end{lem}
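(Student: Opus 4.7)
The plan is to treat the two parts of the lemma in turn, both built around the definition of $\Omegantilde$.

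\textbf{Part (a).} The first inclusion follows from the algebraic identity
\begin{equation*}
\varphinhattilde - \varphintilde = (e^{\lambdanhatDn} - e^{\lambdaDn})\varphinhat + e^{\lambdaDn}(\varphinhat - \varphin).
\end{equation*}
Taking moduli, using $|\varphinhat|\leq 1$ (it is an empirical characteristic function of a probability measure) and invoking the two defining bounds of $\Omegantilde$ gives two summands each bounded by $e^{\lambdaDn}\Tlambda r/n$ after multiplication by $n$. For the second inclusion $\Omegantilde \subseteq \Omegan$, I would verify the two defining conditions of $\Omegan$ separately. The event $\{n_0 > 0\}$ must hold on $\Omegantilde$ because otherwise $\lambdanhat = +\infty$ by \eqref{eq:lambdanhat}, forcing $e^{\lambdanhatDn} = +\infty$ and contradicting the second defining bound of $\Omegantilde$; and $\inf_{|\xi|\leq\Xin}|\varphinhat(\xi)|>0$ follows from the reverse triangle inequality together with the uniform lower bound $|\varphin(\xi)|\geq e^{-2\lambdaDn}$ recorded right after \eqref{eq:repvarphiintro}.

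\textbf{Part (b).} By a union bound,
\begin{equation*}
\Pr(\Omegantildecomplement) \leq \Pr\!\left(\sup_{|\xi|\leq\Xin}n|\varphinhat - \varphin| > \Tlambda r\right) + \Pr\!\left(n|e^{\lambdanhatDn} - e^{\lambdaDn}| > e^{\lambdaDn}\Tlambda r\right).
\end{equation*}
The first summand is controlled by Corollary \ref{corol:varphivarphin}, yielding $e^{-L\Tlambda r^2}$ under the stated hypotheses. For the second, I would reduce to binomial concentration: with $p_0 := e^{-\lambdaDn}$ and $\hat p_0 := n_0/n$ we have $e^{\lambdanhatDn} = 1/\hat p_0$ and $e^{\lambdaDn} = 1/p_0$, so the event is equivalent to $\{|\hat p_0 - p_0|/\hat p_0 > \lambdaDn r\}$, where $\hat p_0$ is the empirical mean of i.i.d. Bernoulli$(p_0)$ variables. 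To eliminate the random denominator I would further split
\begin{equation*}
\Pr\!\left(\tfrac{|\hat p_0 - p_0|}{\hat p_0} > \lambdaDn r\right) \leq \Pr\!\left(|\hat p_0 - p_0| > \tfrac{p_0}{2}\lambdaDn r\right) + \Pr\!\left(\hat p_0 < \tfrac{p_0}{2}\right),
\end{equation*}
and apply Bernstein's inequality to the Binomial$(n,p_0)$ count $n_0$ in each term. Using $\supnDn < \infty$ and $1 - p_0 \leq \lambdaDn$ to control the Bernstein variance, the two contributions combine to $\leq 2\exp(-c\Tlambda r^2)$ for a constant $c \geq Le^{-3\lambda\supnDn}$, yielding the factor $3 = 1 + 2$ in the claimed bound.

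\textbf{Main obstacle.} The main difficulty is the constant bookkeeping in part (b). The nonlinear transformation $p \mapsto 1/p$ introduces an explicit $e^{\lambdaDn}$ on the right-hand side of the second defining inequality of $\Omegantilde$, and $1 - p_0 \leq \lambdaDn$ enters again through the Bernoulli variance; both factors need to be absorbed into the single clean prefactor $e^{-3\lambda\supnDn}$ in $\widetilde L$. The split $\{\hat p_0 < p_0/2\}$ is what makes this possible: on its complement the random denominator is dominated by the deterministic $p_0/2$, and the split itself costs only one further Bernstein bound of the same exponential order. Tracking these constants carefully is what forces the numerical value $L = 3/464$ appearing in Theorem \ref{thm:varphivarphin:c} and guarantees uniformity in $r \in [\sqrt{\log^{1+2\delta}(e+2^J)/\Tlambda}, 1]$ and in $\Dn \leq \supnDn$.
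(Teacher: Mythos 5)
Your proposal is correct and follows essentially the same route as the paper: part (a) via the same decomposition of $\varphinhattilde-\varphintilde$, the bound $|\varphinhat|\le 1$, the lower bound $\inf_{\xi}|\varphin(\xi)|\ge e^{-2\lambdaDn}$ and the identity $e^{\lambdanhatDn}=n/n_0$, and part (b) via the union bound, Corollary \ref{corol:varphivarphin} and a Bernstein bound for the binomial count $n_0$ exploiting the variance of order $\Tlambda$; the only (immaterial) difference is that the paper removes the random denominator $n_0$ through the deterministic implication \eqref{eq:elambdanhatDn-elambdanhatDn>stuff} is not needed in your version—i.e.\ through \eqref{eq:elambdanhatDn-elambdaDn>stuff} and a single two-sided Bernstein application giving the prefactor $3=1+2$ exactly, whereas your extra split on $\{\hat p_0<p_0/2\}$ costs a second Bernstein bound whose naive prefactor $4$ is absorbed into $3$ for $\Tlambda$ large enough because your exponents strictly dominate $\widetilde L\,\Tlambda r^2$. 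One small correction to your closing remark: the value $L=3/464$ is not forced by the bookkeeping in this lemma—it is fixed by the Talagrand argument proving Theorem \ref{thm:varphivarphin:c}, and here the Bernstein constant (the paper's $L'=3/8$) comfortably dominates it, so $\widetilde L=Le^{-3\lambda\supnDn}$ is dictated by the characteristic-function term alone.
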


\begin{proof}
	
	\begin{enumerate}
		\item	The first inclusion of this part of the lemma follows immediately by the definitions of $\varphinhattilde$ and $\varphintilde$ because, in view of them,
		\begin{equation*}
		\varphinhattilde-\varphintilde= \left( e^{\lambdanhatDn} -e^{\lambdaDn}\right) \varphinhat + e^{\lambdaDn} \left( \varphinhat -\varphin\right),
		\end{equation*}
		where $\left|\varphinhat\right|\leq 1$. For the second inclusion, we start by showing that, for any $2^J,r\geq 0$ and $n\in \N$,
		\begin{equation}\label{eq:inclusioncharacteristicfunctions}
		\left\{  \omega \in \Omega: \sup_{|\xi|\leq \Xin} n \left|\varphinhat(\xi)- \varphin(\xi)\right| \leq  \Tlambda r \right\} \subseteq \left\{  \omega \in \Omega: \inf_{ |\xi| \leq \Xin } \left|\varphinhat(\xi)\right| >0\right\}
		\end{equation}
		Trivially, for any $2^J,r\geq 0$ and $n\in \N$, the set on the right hand side contains
		\begin{equation*}
		\left\{ \omega \in \Omega:  \inf_{ |\xi| \leq \Xin } |\varphinhat(\xi)| \geq e^{-2 \lambdaDn}-\lambdaDn r \right\}.
		\end{equation*}
		Then, since $ \inf_{ |\xi| \leq \Xin } |\varphin(\xi)| \geq e^{-2 \lambdaDn}$ for all $2^J\geq 0$ and $n\in \N$, and $\Tlambda:=\lambdaDn n$, the exact same arguments employed at the end of the proof of Theorem 3.2 in \cite{C18} guarantee that the set on the left hand side of \eqref{eq:inclusioncharacteristicfunctions} is contained in the set of the last display, and \eqref{eq:inclusioncharacteristicfunctions} follows. The second inclusion of this part of the lemma is then justified because, in view of the definition of $\lambdanhat$ in \eqref{eq:lambdanhat}, we have that $e^{\lambdanhatDn}=n/n_0$ and, thus, for any $r\geq 0$ and $n\in \N$,
		\begin{equation*}\label{eq:inclusionestimatorslambda}
		\left\{\omega \in \Omega:n\left|e^{\lambdanhatDn} - e^{\lambdaDn} \right| \leq e^{\lambdaDn}\Tlambda r \right\} \subseteq \left\{  \omega \in \Omega: n_0>0\right\}.
		\end{equation*}

		\item  We start by noting that, in view of the present assumptions and Corollary \ref{corol:varphivarphin}, for $\Tlambda$ large enough
		\begin{equation*}
		\Pr\left(\Omegantildecomplement\left(2^J, r\right)\right) \leq e^{- {L} \Tlambda r^2  } + \Pr\left( n\left|e^{\lambdanhatDn}- e^{\lambdaDn}\right| > e^{\lambdaDn}\Tlambda r   \right),
		\end{equation*}
		and, thus, we have to analyse the probability on the right hand side. Recalling that $e^{\lambdanhatDn}=n/n_0$ and by simple algebra, we have that for all $\lambda,\Dn>0, n\in \N$ and $r\geq 0$,
		\begin{equation}\label{eq:elambdanhatDn-elambdaDn>stuff}
		n\left|e^{\lambdanhatDn}- e^{\lambdaDn}\right| >e^{\lambdaDn}\Tlambda r \quad \Longrightarrow \quad \left| n_0-En_0  \right| >  \Tlambda r \frac{ e^{-\lambdaDn}}{1+ \lambdaDn r},
		\end{equation}
		where we note that $En_0=ne^{-\lambdaDn}$ due to $n_0 \sim \text{Bin}(n,e^{-\lambdaDn})$. Therefore, we need to use a concentration inequality for $n_0$; we use Bernstein's inequality to exploit that $\mbox{var}(n_0)=ne^{-\lambdaDn}(1-e^{-\lambdaDn})=O(\lambdaDn)$ if $\Dn\to 0$, in a similar spirit to what we did when dealing with the characteristic function. By Theorem 3.1.7 in \cite{GN16}, it follows that, for all $u\geq 0$,
		\begin{align*}
		\Pr\left( \left| n_0\!-\!En_0  \right| \!>\! u  \right)\! &\leq \!2 \exp\left( \!-\frac{u^2}{2ne^{-\!\lambdaDn}(1\!-\!e^{-\!\lambdaDn}) + 2u/3}\right) \\
		&\!\leq \!2 \exp\left( \!-\frac{u^2}{2e^{-\!\lambdaDn}\!\Tlambda\!+ 2u/3}\right),
		\end{align*}
		where the second inequality follows because $(1-e^{-x})\leq x$ for all $x\geq 0$. Then, using that $0\leq  r\leq 1$ by assumption together with $1+x \leq e^{x}$ for all $x\geq 0$, 	and defining $L':=3/8 >>L=3/464$, we conclude that
		\begin{equation*}
		\Pr\left(\Omegantildecomplement\left(2^J, r\right)\right) \leq e^{- {L}  \Tlambda r^2 }+ 2 e^{  - L' e^{-3\lambdaDn} \Tlambda r^2 } \leq 3 e^{  - \widetilde{L} \Tlambda r^2 }.
		\end{equation*}

	\end{enumerate}
	
\end{proof}

On the sets $\Omegantilde$, the following simple, yet key, lemma holds.

\begin{lem}\label{lem:remindertermLog}
	If $0\!\leq \!r\!<\! e^{-2\lambdaDn}/(4\lambdaDn)$, $2^J\!\geq\! 0$ and $n\!\in\! \N$, we have that on $\Omegantilde\left(2^J, r\right)$
	\begin{equation*}
	\Log\left(\frac{\varphinhattilde}{\varphintilde}\right)(\xi)  = \left( {\varphinhattilde(\xi)}{-\varphintilde(\xi)}\right) \, \varphintilde^{-1}(\xi) + R_n(\xi) \qquad \mbox{for any} \quad|\xi| \leq \Xin,
	\end{equation*}
	where $\sup_{|\xi|\leq \Xin} |R_n(\xi)| \leq \sup_{|\xi|\leq \Xin} \big|\left( {\varphinhattilde(\xi)}{-\varphintilde(\xi)}\right) \, \varphintilde^{-1}(\xi)\big|^2\leq  4(\lambdaDn r)^2 e^{4\lambdaDn}.$
\end{lem}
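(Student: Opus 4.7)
The plan is to reduce the statement to the standard Taylor expansion $\log(1+z)=z-z^2/2+z^3/3-\dots$ for the principal branch of the logarithm, applied to $z_n(\xi):=(\widetilde{\varphi}_n(\xi)-\widetilde{\varphi}(\xi))\widetilde{\varphi}^{-1}(\xi)$. For this, one only needs two ingredients: a uniform lower bound on $|\widetilde{\varphi}|$, and an upper bound on $|\widetilde{\varphi}_n-\widetilde{\varphi}|$ on the event $\widetilde{\Omega}_n(2^J,r)$ and in the relevant frequency band $|\xi|\le \Xi_n$.

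The lower bound is immediate: from $\widetilde{\varphi}=e^{\lambda\Delta_n}\varphi$ and $\inf_{\xi\in\R}|\varphi(\xi)|\ge e^{-2\lambda\Delta_n}$, established in \eqref{eq:repvarphiintro}, one has $|\widetilde{\varphi}(\xi)|\ge e^{-\lambda\Delta_n}$ for all $\xi\in\R$. For the upper bound, the first inclusion of Lemma \ref{lem:OmegantildesubseteqOmegan:a} yields, on $\widetilde{\Omega}_n(2^J,r)$, $\sup_{|\xi|\le \Xi_n}n|\widetilde{\varphi}_n(\xi)-\widetilde{\varphi}(\xi)|\le 2e^{\lambda\Delta_n}T_\lambda r$. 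Combining both and using $T_\lambda=\lambda\Delta_n n$ gives
\[
\sup_{|\xi|\le \Xi_n}|z_n(\xi)|\;\le\; 2e^{2\lambda\Delta_n}\,\lambda\Delta_n\,r,
\]
which, by the standing hypothesis $r<e^{-2\lambda\Delta_n}/(4\lambda\Delta_n)$, is strictly less than $1/2$.

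Since $|z_n|<1/2<1$ pointwise on $|\xi|\le\Xi_n$, the value $1+z_n(\xi)=\widetilde{\varphi}_n(\xi)/\widetilde{\varphi}(\xi)$ lies in the open disk around $1$ of radius $1/2$, which is contained in the slit plane $\mathbb{C}\setminus(-\infty,0]$ where the principal branch $\mathrm{Log}$ is analytic and coincides with the distinguished logarithm used in the statement (and in \eqref{eq:identityLK2}). Hence the standard power series expansion converges absolutely and gives
\[
\mathrm{Log}\!\left(\frac{\widetilde{\varphi}_n(\xi)}{\widetilde{\varphi}(\xi)}\right)=z_n(\xi)+R_n(\xi),\qquad R_n(\xi):=\sum_{k\ge 2}\frac{(-1)^{k-1}}{k}\,z_n(\xi)^k.
\]

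Finally, the remainder is controlled by the elementary estimate
\[
|R_n(\xi)|\;\le\;\sum_{k\ge 2}\frac{|z_n(\xi)|^k}{k}\;\le\;\frac{1}{2}\sum_{k\ge 2}|z_n(\xi)|^k\;=\;\frac{|z_n(\xi)|^2}{2(1-|z_n(\xi)|)}\;\le\;|z_n(\xi)|^2,
\]
where the last inequality uses $|z_n(\xi)|\le 1/2$. Taking suprema over $|\xi|\le\Xi_n$ and inserting the bound $\sup|z_n|\le 2e^{2\lambda\Delta_n}\lambda\Delta_n r$ derived above yields $\sup_{|\xi|\le \Xi_n}|R_n(\xi)|\le 4(\lambda\Delta_n r)^2 e^{4\lambda\Delta_n}$, completing the proof. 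The only subtle point is identifying $\mathrm{Log}(\widetilde{\varphi}_n/\widetilde{\varphi})$ with the principal branch, but this is automatic because the argument stays in the half-disk of radius $1/2$ about $1$.
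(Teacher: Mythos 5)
Your proposal is correct and follows essentially the same route as the paper: the same inclusion from Lemma \ref{lem:OmegantildesubseteqOmegan:a} and the lower bound $|\varphintilde|\geq e^{-\lambdaDn}$ give $\sup_{|\xi|\leq\Xin}|z_n(\xi)|\leq 2e^{2\lambdaDn}\lambdaDn r<1/2$, after which the distinguished logarithm is identified with the principal branch on the disc about $1$ and the remainder is bounded by $|z_n|^2$. The only cosmetic difference is that you derive the estimate $|\Log(1+z)-z|\leq|z|^2$ from the power series, whereas the paper invokes it directly as a standard property of the principal branch on $|z|<1/2$.
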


\begin{proof}
	Let us first note that, for a function $f:\R \to \C\setminus\{0\}$ such that $f(0)=1$, the distinguished logarithm of it as defined in Section \ref{sec:assumptionsandtheestimator} coincides with the principal branch of the complex logarithm if the path described by $f(\xi)$ when started from $\xi=0$ does not cross the negative real line. In particular, this is the case if the path is of the form $1+z$ for some complex number $|z|<1/2$, and, in such a disc, the principal branch of the complex logarithm	satisfies that $|\Log(1+z) -z|\leq |z|^2$. Notice also that $\inf_{\xi \in \R} |\varphin(\xi)| \geq e^{-2\lambdaDn}$ so, in view of the first inclusion in Lemma \ref{lem:OmegantildesubseteqOmegan:a},  if $\omega \in \Omegantilde\left(2^J, r\right)$ with $2^J\geq 0$, $0\leq r< e^{-2\lambdaDn}/(4\lambdaDn)$ and $n\in\N$,
	\begin{equation*}
	\sup_{|\xi|\leq \Xin} \left|\frac{\varphinhattilde(\xi) - \varphintilde(\xi)}{\varphintilde(\xi)} \right| \leq 2 \lambdaDn r \, e^{2\lambdaDn}  < 1/2.
	\end{equation*}
	Then,  the conclusion  follows by taking, for any $n\in\N$ and $\xi\in \R$,
	\begin{equation*}
	R_n(\xi):=  \Log\left(1+z_n(\xi)\right)  - z_n(\xi) \qquad \mbox{with} \quad z_n(\xi):=\frac{\varphinhattilde(\xi) - \varphintilde(\xi)}{\varphintilde(\xi)}.
	\end{equation*}
\end{proof}

Due to the basic property that the sum of distinguished logarithms is the distinguished logarithm of the product together with the triangle inequality, it immediately follows that, on $\Omegantilde\left(2^J, r\right)$ with $n,2^J$ as in Lemma \ref{lem:remindertermLog} and $0\leq r< \min\{1,e^{-2\lambdaDn}/(4\lambdaDn)\}$,
\begin{align*}\label{eq:Dn-1|logvarphitilde|boundOmegatilde}
\sup_{|\xi|\leq \Xin}\abs{\frac{\Log \varphinhattilde(\xi)}{\Dn}} \!\leq\! \sup_{|\xi| \leq \Xin}\abs{\frac{1}{\Dn}{\Log\left(\frac{\varphinhattilde}{\varphintilde}\right)\!(\xi)} }\!+\! \sup_{|\xi|\leq \Xin}\abs{\frac{\Log \varphintilde(\xi)}{\Dn}(\xi)}\! \leq\! 3 \lambda e^{2\lambdasupnDn} 
\!+\! \lambda\!<\!\infty.
\end{align*}
Therefore, due to the Condition \ref{cond:4} 
and that $U\to \infty$ in both Theorem \ref{thm:upperbound} and \ref{thm:adaptation}, the truncation by $U$ is innocuous on $\Omegantilde\left(2^J, r\right)$ and we have that, for such $n,r$, $J\!\in\! \Nz$ and  $T$ or $\Tlambda$ large enough, respectively,
\begin{equation}\label{eq:presplitnunhat-nu}
\hat{\nu}-\nu=\left((\hat{\nu}-\nu)^{(MS)} + (\hat{\nu}-\nu)^{(R)} + (\hat{\nu}-\nu)^{(B)}\right) \one_{\Omegantilde} + \left|\hat{\nu}-\nu\right|\one_{\Omegantildecomplement},
\end{equation}
where, in view of the second inclusion in Lemma \ref{lem:OmegantildesubseteqOmegan:a} and due to the linearity of $\tildeK_J$, the main stochastic or linear term is
\begin{align}\label{eq:defmainstochastic}
(\hat{\nu}-\nu)^{(MS)}:=\frac{1}{2\pi} \rightbarH{\tildeKJ{\frac{\varphinhattilde-\varphintilde}{\Dn \varphintilde}}},
\end{align}
whilst the remainder or non-linear term and the bias term are, respectively,
\begin{align}\label{eq:defremainderbias}
(\hat{\nu}-\nu)^{(R)}:= \frac{1}{2\pi}\rightbarH{\tildeKJ{\frac{R_n}{\Dn}}}\quad \mbox{and} \quad (\hat{\nu}-\nu)^{(B)}:= \frac{1}{2\pi} \rightbarH{\tildeKJ{\frac{\Log {\varphintilde}}{\Dn}}} - \nu.
\end{align}

In order to show our results we rewrite the main stochastic term as follows. 
Defining $\Pntilde:=e^{\lambdaDn} \Pn$ and $\Pnhattilde:=e^{\lambdanhatDn} \Pnhat$, we have that, on $\Omegantilde$ and for all $x\in \R$,
\begin{align*}
\tildeKJ{\frac{\varphinhattilde-\varphintilde}{2\pi \varphintilde} }\!(x)= \left\langle \FTT{ \Pnhattilde - \Pntilde} ,    \frac{\tildeKJ{x,\cdot}}{2\pi \varphintilde(-\cdot)} \right\rangle = \left( \Pnhattilde- \Pntilde\right)\FII{ \varphintilde^{-1}(-\cdot)  \tildeKJ{x,\cdot}},
\end{align*}
where the last equality follows by Fubini's theorem because $\Pnhattilde - \Pntilde$ is a finite measure on $\Omegantilde$ due to $n_0\!>\!0$, and $\varphintilde^{-1}(-\cdot)  \tildeKJ{x,\cdot} \!\in\! \Lone$ for all $x\!\in\! \R$ due to $\norm{\varphintilde^{-1}}_{\infty} \!\!\leq\! \exp( \lambdaDn)$ and \eqref{eq:suptildeKJ}. 
Recalling the notation $ \nu^{-} (B) = \nu (-B)$ for any Borel set $B \subseteq \R$, we define 
\begin{align}\label{eq:FIvarphintilde-1(-cdot)}
\FII{ \varphintilde^{-1}(-\cdot)}:=  \sum_{i=0}^{\infty} \frac{(-\Dn)^i}{i !}  \left(\nu^{-}\right)^{\ast i},
\end{align}
and remark that this is a finite (signed) measure on $\R$
. Then, arguing as in the proof of Lemma 3.3 in \cite{C18} using Conditions \ref{cond:2} and \ref{cond:4}, we have, for all $x\in \R$,
\begin{align}
(\hat{\nu}-\nu)^{(MS)}(x)&=\frac{1}{\Dn} \rightbarH{ \left(\left( \Pnhattilde - \Pntilde\right)\left(\FII{ \varphintilde^{-1}(-\cdot)}  \ast \KJ{x,\cdot}\right)\right)}\label{eq:mainstochastic1} \\
&=\frac{1}{\Dn} \rightbarH{ \left(\FII{ \varphintilde^{-1}(-\cdot)} \left( \left( \Pnhattilde - \Pntilde\right)\ast \KJ{x,-\cdot} \right)\right)},\label{eq:mainstochastic2}
\end{align}
where the second equality follows due to the uniform boundedness of $K_J(\cdot,\cdot)$ by Condition \ref{cond:2}. Lastly, using the notation  in Section \ref{sec:unification} and $e^{\lambdanhatDn}=n/n_0$ with $n_0 \sim \text{Ber}(n,e^{-\lambdaDn})$, 
\begin{align}
\Pnhattilde - \Pntilde &=  \left( e^{\lambdanhatDn} -e^{\lambdaDn}\right) \Pnhat + e^{\lambdaDn} \left( \Pnhat -\Pn\right)\label{eq:tildePn-tildeP1}\\
&=\left(\frac{n-n_0}{n_0}\right) \left( \widehat{\Pn \,}^{J)}_{\!n} - \Pn^{J)}\right) + e^{\lambdaDn}\left(\frac{\E n_0 - n_0}{n_0}\right) \Pn^{J)}.\label{eq:tildePn-tildeP2}
\end{align}

\subsection{Proof of Theorem \ref{thm:upperbound}}\label{sec:proofs:upperbound}

In view of \eqref{eq:presplitnunhat-nu}  with $\hat{\nu}=\nunhat$ and by the triangle inequality, we have that, for all $p\in [1,\infty]$, $H\geq 0$ if $p\in[1,2)$ and $H=\infty$ if $p\in [2,\infty]$, $n\in\N$, $J\in \Nz$, $0\leq r < \min\{1,e^{-2\lambdaDn}/(4\lambdaDn)\}$ and 
$\Tlambda$ large enough,
\begin{align}\label{eq:splitnunhat-nu}
\E \norm{\nunhat - \nu}_p  \leq &\E\left[ \norm{\left(\nunhat- \nu\right)^{(MS)}}_p \one_{ \Omegantilde\left( 2^J, r\right)}\right] + \E\left[ \norm{\left(\nunhat - \nu\right)^{(R)}}_p \one_{ \Omegantilde\left( 2^J, r\right)}\right] \notag\\
&+ \E\left[ \norm{\left(\nunhat - \nu\right)^{(B)}}_p \one_{ \Omegantilde\left( 2^J, r\right)}\right] + \E\left[ \norm{\nunhat - \nu }_p \one_{ \Omegantildecomplement\left( 2^J, r\right)}\right].
\end{align}
In the following four results we  bound each of these terms in reverse order 
and, at the end, put them together to show Theorem \ref{thm:upperbound}.

\begin{lem}\label{lem:Omegantildecomplement}
	Let $\supnDn <\infty$ and assume that $\nu\in \Lp$ for some $p\in [1,\infty]$ and that $\mu=\nu/\lambda$ satisfies Assumption \ref{ass2b} for some $\beta>1$. Furthermore, if $p\in[1,2)$, let $H\geq 0$ and, if $p\in[2,\infty]$ let $H=\infty$. Then, if $2^J=\twoJn$ as in \eqref{eq:2J} and $\sqrt{\log^{1+2\delta}(e+2^J) / \Tlambda } \leq  r\leq 1$ for some $\delta>0$, we have that, for $\widetilde L$ as in Lemma \ref{lem:OmegantildesubseteqOmegan:b}, any $U\geq 0$ and $\Tlambda$ large enough,
	\begin{equation}\label{eq:Omegantildecomplementbound}
	\E\left[\norm{\nunhat-\nu}_p \one_{\Omegantildecomplement\left(2^J, r\right)} \right]\lesssim \left( 2^J U H^{1/p}\one_{[1,2)}(p) +2^{J(1-{1}/{p})} U\one_{[2,\infty]}(p) +\norm{\nu}_p\right)  e^{- \widetilde L \Tlambda r^2  },
	\end{equation}
	where the constant hidden in the notation $\lesssim$ only depends on $K$.
\end{lem}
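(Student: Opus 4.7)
The strategy is to extract the indicator from the expectation via a deterministic upper bound on $\norm{\nunhat}_p$ polynomial in $J, U$ and $H$, combine it with $\norm{\nu}_p$ through the triangle inequality, and invoke Lemma~\ref{lem:OmegantildesubseteqOmegan:b} to gain the exponential factor. First, by construction $\nunhat \equiv 0$ on $\Omegan^{\setcomplement}$, and since $\Omegan^{\setcomplement} \subseteq \Omegantildecomplement(2^J,r)$ by Lemma~\ref{lem:OmegantildesubseteqOmegan:a}, the contribution on this subset is at most $\norm{\nu}_p\,\Pr(\Omegantildecomplement)$. On the remaining set $\Omegan \cap \Omegantildecomplement$ the estimator is given by the formula in~\eqref{eq:estimatornu}, so it suffices to bound $\norm{\nunhat}_p$ by a deterministic constant and apply the triangle inequality.

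For $p\in[1,2)$, where $\nunhat$ is supported in $[-H,H]$, I would use a pointwise bound. Since $|\rightbarU{\cdot}|\le U$ and, by Conditions~\ref{cond:2} and~\ref{cond:4}, $\sup_{x,\xi}|\tildeKJ{x,\xi}|$ is a constant depending only on $K$ while $\supp(\tildeKJ{x,\cdot})\subseteq[-\Xin,\Xin]$, one gets
\begin{equation*}
|\nunhat(x)|\le \frac{U}{2\pi}\int_{-\Xin}^{\Xin}|\tildeKJ{x,\xi}|d\xi\,\one_{[-H,H]}(x)\lesssim U\cdot 2^J\,\one_{[-H,H]}(x),
\end{equation*}
and hence $\norm{\nunhat}_p\lesssim U\cdot 2^J\cdot H^{1/p}$.

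For $p\in[2,\infty]$ this pointwise approach fails because $H=\infty$ and $\nunhat$ need not be in $L^p$ a priori. Instead I would exploit the Plancherel-type identity $\tfrac{1}{2\pi}\tildeKJ{f}=\KJ{\FII f}$ visible in~\eqref{eq:innerprodPlancharel}, applied to the truncated argument $f:=\rightbarU{\Dnsuper{-1}\Log\varphinhattilde}\,\one_{[-\Xin,\Xin]}$ (this truncation leaves $\nunhat$ unchanged thanks to Condition~\ref{cond:4}). By~\eqref{eq:boundnormKJfpbynormfp} and the Hausdorff--Young inequality with $1/p+1/p'=1$,
\begin{equation*}
\norm{\nunhat}_p\lesssim \norm{\FII{f}}_p\lesssim \norm{f}_{p'}\le U\cdot(2\Xin)^{1/p'}\lesssim U\cdot 2^{J(1-1/p)}.
\end{equation*}

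Combining both cases with $\Pr(\Omegantildecomplement)\le 3e^{-\widetilde L\Tlambda r^2}$ from Lemma~\ref{lem:OmegantildesubseteqOmegan:b} (valid for $\Tlambda$ large enough under the hypotheses on $r$ and $2^J$) produces exactly~\eqref{eq:Omegantildecomplementbound}, with the constants hidden in $\lesssim$ depending only on $K$ as required. The main obstacle is the $p\in[2,\infty]$ case: without any spatial truncation one must cross over to the Fourier side via $\tildeKJ{\cdot}=2\pi\KJ{\FII{\cdot}}$ and use the spectral cut-off from Condition~\ref{cond:4} to convert Hausdorff--Young into a bound on the $L^{p'}$-norm of a bounded function on a set of measure $\lesssim 2^J$, producing the correct exponent $1-1/p$.
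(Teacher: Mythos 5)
Your proposal is correct and follows essentially the same route as the paper: triangle inequality on $\Omegan\cap\Omegantildecomplement$, a deterministic bound on $\norm{\nunhat}_p$ using the truncation level $U$ and the spectral support of size $\lesssim 2^J$ (with the window $[-H,H]$ for $p\in[1,2)$ and Hausdorff--Young after passing to $K_J(\FII{\cdot})$ for $p\in[2,\infty]$), combined with the probability bound of Lemma \ref{lem:OmegantildesubseteqOmegan:b}. The only cosmetic difference is that for $p\in[1,2)$ you bound the inner product pointwise directly rather than via $\norm{K_J(\FII{\cdot})}_\infty\lesssim\norm{\cdot}_1$ as the paper does, which yields the same estimate with constants depending only on $K$.
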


\begin{proof}
	By the  triangle inequality,  the definition of $\nunhat$ in \eqref{eq:estimatornu} and its expression with the notation at the beginning of Section \ref{sec:proofs:prepresults} 
	valid on $\Omegan$, 
	\begin{equation}\label{eq:Omegantildecomplementboundpre}
	\norm{\nunhat-\nu}_p \one_{\Omegantildecomplement\left(2^J, r\right)} \leq  \norm{\frac{1}{2\pi}\!\rightbarH{\tildeK_J\left(\rightbarU{\frac{\Log \varphinhattilde}{\Dn}}\right)} }_p \!\one_{\Omegan \cap \Omegantildecomplement\left(2^J, r\right)} + \norm{\nu}_p \one_{\Omegantildecomplement\left(2^J, r\right)}.
	\end{equation}
	In view of  Lemma \ref{lem:OmegantildesubseteqOmegan:b}, the second summand immediately gives rise to the last term in \eqref{eq:Omegantildecomplementbound}. For the first summand we note that, in view of \eqref{eq:suptildeKJ}
	, Fubini's theorem and the definition of $\Omegan$ in \eqref{eq:defOmegan}, 
	for all $U\geq 0$ and $x\in \R$, 
	\begin{equation}\label{eq:ftildeK=KFIf}
	\frac{1}{2\pi}\left\langle   \rightbarU{\frac{\Log \varphinhattilde}{\Dn}}\!\!, \tildeKJ{x,\cdot} \right\rangle \!=\!\KJ{  \FII{\rightbarU{\frac{\Log \varphinhattilde}{\Dn}}\!\! \one_{\widetilde{S}}}}\!(x),
	\end{equation}
	where $\widetilde{S}\!:=\!{\supp\big(\tildeKJ{x,\cdot}\big)}$, and we recall that, by the first part of Condition \ref{cond:4}, $\widetilde{S}$ does not depend on $x$. To bound the first summand in the penultimate display 
	we proceed differently depending on whether $p\!\in\! [1,2)$ or $p\!\in\![2,\infty]$. In the former case  it is bounded above by
	\begin{align*}
	(2H)^{\frac{1}{p}}\norm{K_J\!\left(\!\FII{\!\rightbarU{\frac{\Log \varphinhattilde}{\Dn}}\!\!\one_{\widetilde{S}}}\right)}_\infty \lesssim (2H)^{\frac{1}{p}}\norm{{\!\rightbarU{\frac{\Log \varphinhattilde}{\Dn}}\!\!\one_{\widetilde{S}}}}_1  \leq (2H)^{\frac{1}{p}} \Xin U,
	\end{align*}
	where in the first inequality we used \eqref{eq:boundnormKJfpbynormfp} and $\FII{\Lone}\subseteq \Linf$, so the constant in $\lesssim$ only depends on $\barK$, and in the second inequality follows from the second part of Condition \ref{cond:4}. 
	For $p\in[2,\infty]$, we use \eqref{eq:boundnormKJfpbynormfp} again to justify that 
	the quantity of interest is bounded above, up to a constant only depending on $\barK$, by
	\begin{equation*}
	\norm{\FII{\rightbarU{\frac{\Log \varphinhattilde}{\Dn}}\one_{\widetilde{S}}}}_p\leq \frac{1}{2\pi} \norm{{\!\rightbarU{\frac{\Log \varphinhattilde}{\Dn}}\one_{\widetilde{S}}}}_{(1-1/p)^{-1}} \leq \frac{1}{2\pi} \left(\Xin\right)^{1-1/p} U,
	\end{equation*}
	where in the first inequality we used the Hausdorff--Young inequality. 
	These two bounds together with Lemma \ref{lem:OmegantildesubseteqOmegan:b} show the conclusion of the present lemma for any $p\in[1,\infty]$. 
\end{proof}

\begin{lem}\label{lem:bias}
	Assume that $\mu=\nu/\lambda$ satisfies Assumption \ref{ass1} for some $p\in [1,\infty]$ and $s>0$, and,  if  $p\in [1,2)$, suppose it satisfies Assumption \ref{ass2a} 
	for some $\alpha>0$ and $q=p$. 
	Let $H\geq 0$ if $p\in[1,2)$ and $H=\infty$ if $p\in[2,\infty]$. 
	Then, for any $J\in \Nz$, 
	\begin{equation}\label{eq:biasbound}
	\norm{(\nunhat-\nu)^{(B)}}_p  \leq \lambda C^{(B)}  \norm{\mu}_{B_{p,\infty}^s} 2^{-Js} +  \lambda \norm{ \mu}_{p,\alpha} H^{-\alpha}   \one_{[1,2)}(p),
	\end{equation}
	where $C^{(B)}=C^{(B)}(s,K)$. 
\end{lem}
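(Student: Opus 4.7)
The plan is to exploit the identity \eqref{eq:identityLK2} to convert the distinguished logarithm term into a purely deterministic approximation problem for $\nu$ by its Calderon--Zygmund smoothing $K_J(\nu)$, and then separate out the tail truncation by $\one_{[-H,H]}$ when $p\in[1,2)$.

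First, I would observe that \eqref{eq:identityLK2} gives $\Log \varphintilde / \Dn = \FT \nu$ on $\R$, and since $\nu \in \Lone$ with $\norm{\FT \nu}_{\infty} \le \lambda$, the $U$-truncation has been dropped from the bias term in \eqref{eq:defremainderbias} in any case. Combining this with the rewriting in \eqref{eq:innerprodPlancharel} applied to the finite measure $\nu$ (Fubini is justified by Condition \ref{cond:2} together with $\nu \in \Lone$), I obtain
\begin{equation*}
\frac{1}{2\pi} \tildeKJ{\frac{\Log \varphintilde}{\Dn}}(x) = \KJ{\nu}(x), \qquad x\in\R,
\end{equation*}
so that $(\nunhat-\nu)^{(B)} = \rightbarH{\KJ{\nu}} - \nu$, which equals $\KJ{\nu}-\nu$ when $H=\infty$ and $\KJ{\nu}\one_{[-H,H]} - \nu$ when $H<\infty$.

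For $p\in[2,\infty]$, where $H=\infty$, the bound is immediate from \eqref{eq:boundLrnormKjpernu} applied to $\nu = \lambda \mu$, giving $\norm{\KJ{\nu}-\nu}_p \le C^{(B)} \lambda \norm{\mu}_{B^{s}_{p,\infty}} 2^{-Js}$, with $C^{(B)}$ depending only on $s$ and the kernel (as noted after \eqref{eq:boundLrnormKjpernu}). For $p\in[1,2)$, I would add and subtract $\nu\one_{[-H,H]}$ and use the triangle inequality to split
\begin{equation*}
\norm{\KJ{\nu}\one_{[-H,H]} - \nu}_p \le \norm{\bigl(\KJ{\nu}-\nu\bigr)\one_{[-H,H]}}_p + \norm{\nu\one_{[-H,H]^{\setcomplement}}}_p \le \norm{\KJ{\nu}-\nu}_p + \norm{\nu\one_{[-H,H]^{\setcomplement}}}_p.
\end{equation*}
The first term is again controlled by \eqref{eq:boundLrnormKjpernu}. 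For the tail term I would invoke Assumption \ref{ass2a} with $q=p$: writing $|\nu(x)| \le \lambda (1+|x|)^{-\alpha} (1+|x|)^{\alpha}|\mu(x)|$ and factoring out the supremum of $(1+|x|)^{-\alpha}$ over $|x|>H$, I obtain $\norm{\nu\one_{[-H,H]^{\setcomplement}}}_p \le \lambda (1+H)^{-\alpha}\norm{\mu}_{p,\alpha} \le \lambda H^{-\alpha}\norm{\mu}_{p,\alpha}$. Adding the two contributions yields \eqref{eq:biasbound}. No step is particularly delicate; the only point to be careful about is justifying that the two applications of \eqref{eq:boundLrnormKjpernu} (for $p\in[2,\infty]$ and for the truncated integrand in $p\in[1,2)$) both rely on the Littlewood--Paley characterisation referenced after \eqref{eq:boundLrnormKjpernu}, and that the constant $C^{(B)}$ therein depends only on $s$ and $K$, not on $p$ or $H$.
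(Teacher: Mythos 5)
Your proposal is correct and follows essentially the same route as the paper: identify the bias term with $\rightbarH{\left(K_J(\nu)-\nu\right)} - \nu\one_{[-H,H]^{\setcomplement}}$ via \eqref{eq:identityLK2}, Fourier inversion and Fubini, then bound the approximation error by \eqref{eq:boundLrnormKjpernu} and the tail term by Assumption \ref{ass2a} with $q=p$. The only cosmetic difference is that the paper states the decomposition directly rather than via an explicit add-and-subtract, which is the same computation.
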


\begin{proof}

	Note that, due to \eqref{eq:suptildeKJ} and Condition \ref{cond:2}, 
	the Fourier inversion theorem (cf. result 8.26 in \cite{F99}) guarantees that, 
	for all 
	$J\in \Nz$ and $x,y\in \R$, $\KJ{x,y}= \FII{  \tildeKJ{x,\cdot}}(y)$.
	Then, in view of \eqref{eq:identityLK2}, 
	$\nu\in \Lone$ and  Fubini's theorem,
	\begin{equation*}
	\tildeK_J\left(\frac{\Log {\varphintilde}}{\Dn}\right)= \tildeK_J\left(\FT \nu\right) = 
	K_J(\nu),
	\end{equation*} 
	and we can write
	\begin{equation*}
	(\nunhat-\nu)^{(B)}= \rightbarH{\left(K_J(\nu)-\nu\right)} - \nu \one_{[-H,H]^{\setcomplement}},
	\end{equation*}
	where in the case $p\in[2,\infty]$ the second summand on the right hand side is not present since $H=\infty$. Then, the statement of the present lemma follows because $\mu=\nu/\lambda$ satisfies Assumption \ref{ass1} for some $p\in [1,\infty]$ and $s>0$, and  Assumption \ref{ass2a} for some $\alpha>0$ and $q=p$ if  $p\in [1,2)$, together with \eqref{eq:boundLrnormKjpernu}.
\end{proof}


\begin{prop}\label{prop:remainder}
	For all $p\in[1,\infty]$, $H\geq 0$ if $p\in[1,2)$ and $H=\infty$ if $p\in[2,\infty]$, $J\in \Nz$,	$0\leq r< e^{-2\lambdaDn}/(4\lambdaDn)$ and $n\in \N$, we have that 
	\begin{align}\label{eq:remainderbound}
	\E \left[\norm{(\nunhat-\nu)^{(R)}}_p \one_{\Omegantilde\left(2^J, r\right)}  \right] 	 \leq C^{(R)}  \lambda^2 \Dn e^{4\lambdaDn} r^2 &\left(  H^{\left(\frac{2}{p}-1\right)}2^{\frac{J}{p}} \one_{[1,2)}(p)\right. \notag \\
	& \left.\,\,\,\,+ 2^{J\left(1-\frac{1}{p}\right)} \one_{[2,\infty]}(p) \right),
	\end{align}	
	where $C^{(R)}=C^{(R)}\left(p,K\right)$.
\end{prop}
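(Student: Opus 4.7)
The plan is as follows. On $\Omegantilde(2^J,r)$, the definition \eqref{eq:defremainderbias} gives
\[
(\nunhat-\nu)^{(R)} = \tfrac{1}{2\pi}\rightbarH{\tildeKJ{R_n/\Dn}},
\]
and Lemma \ref{lem:remindertermLog} together with $0\le r<e^{-2\lambda\Dn}/(4\lambda\Dn)$ yields the deterministic bound $\norm{R_n/\Dn}_\infty \le 4\lambda^2\Dn\, r^2\, e^{4\lambda\Dn}$ on this event. Hence the expectation in \eqref{eq:remainderbound} is controlled by a deterministic $L^p$ bound, and the task reduces to estimating $\norm{\rightbarH{\tildeKJ{g}}}_p$ for a function $g$ with $\norm{g}_\infty\le 4\lambda^2\Dn\, r^2\, e^{4\lambda\Dn}$ and whose support (via the Fourier-side truncation implicit in $\widetilde{K}_J$) lies in the set $\widetilde S$ from \eqref{eq:ftildeK=KFIf}, which by Condition \ref{cond:4} satisfies $|\widetilde S|\le 2\Xin\lesssim 2^J$.

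By the same Fubini/Fourier inversion identity used in \eqref{eq:ftildeK=KFIf}, I would rewrite
\[
\tfrac{1}{2\pi}\tildeKJ{g}(x) = K_J\!\left(\FII{g\,\one_{\widetilde S}}\right)\!(x),\qquad x\in\R.
\]
For $p\in[2,\infty]$, combining the $L^p$-boundedness \eqref{eq:boundnormKJfpbynormfp} of $K_J$ with the Hausdorff--Young inequality $\norm{\FII{h}}_p\lesssim \norm{h}_{p'}$ and Hölder's inequality gives
\[
\norm{K_J(\FII{g\,\one_{\widetilde S}})}_p \lesssim \norm{g\,\one_{\widetilde S}}_{p'} \le |\widetilde S|^{1-1/p}\norm{g}_\infty \lesssim 2^{J(1-1/p)}\norm{g}_\infty,
\]
with the hidden constants depending only on $K$; this yields the second summand in \eqref{eq:remainderbound}.

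For $p\in[1,2)$, since Hausdorff--Young is not available for $p'\in(2,\infty]$, I would interpolate via $\norm{f}_p\le\norm{f}_1^{2/p-1}\norm{f}_2^{2-2/p}$. The endpoint $p=2$ bound is obtained exactly as above (or directly by Plancherel): $\norm{\rightbarH{K_J(\FII{g\,\one_{\widetilde S}})}}_2\lesssim |\widetilde S|^{1/2}\norm{g}_\infty\lesssim 2^{J/2}\norm{g}_\infty$. The endpoint $p=1$ bound uses the truncation to $[-H,H]$ together with $\FII{L^1}\hookrightarrow L^\infty$ and \eqref{eq:boundnormKJfpbynormfp}:
\[
\norm{\rightbarH{K_J(\FII{g\,\one_{\widetilde S}})}}_1 \le 2H\,\norm{K_J(\FII{g\,\one_{\widetilde S}})}_\infty \lesssim H\,\norm{g\,\one_{\widetilde S}}_1 \lesssim H\cdot 2^J\norm{g}_\infty.
\]
Interpolating then gives $\norm{\rightbarH{K_J(\FII{g\,\one_{\widetilde S}})}}_p\lesssim H^{2/p-1}\,2^{J/p}\,\norm{g}_\infty$, which together with the bound on $\norm{g}_\infty$ yields the first summand in \eqref{eq:remainderbound}.

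The substantive step is the interpolation argument for $p\in[1,2)$, since the direct Hausdorff--Young route used for $p\in[2,\infty]$ is unavailable there; conveniently, both endpoints are expressible in terms of the same $L^\infty$-estimate on $R_n/\Dn$ from Lemma \ref{lem:remindertermLog} and the size of $\widetilde S$, so the interpolation is clean and produces exactly the exponents $H^{2/p-1}2^{J/p}$ stated in \eqref{eq:remainderbound}. The remaining work is bookkeeping of the constants that feed into $C^{(R)}$.
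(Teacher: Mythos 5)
Your proposal is correct and follows the same skeleton as the paper's proof: the deterministic quadratic bound on $R_n$ from Lemma \ref{lem:remindertermLog}, the identity \eqref{eq:ftildeK=KFIf} rewriting $\tildeK_J$ as $K_J\circ\FI$ on the spectral support $\widetilde S$ with $|\widetilde S|\lesssim 2^J$, the $L^1$/$L^\infty$ and $L^2$ (Plancherel) endpoint bounds, and the Hausdorff--Young route for $p\in[2,\infty]$ (which the paper itself notes as an equivalent alternative to its interpolation for $p\in(2,\infty)$). The only genuine difference is at the intermediate step $p\in(1,2)$: the paper sets up $\tildeK_J$ as an operator from $L^\infty\left(\mu_{[-\Xin,\Xin]}\right)$ into $L^1\left(\mu_{[-H,H]}\right)+L^2\left(\mu_{[-H,H]}\right)$ and invokes the Riesz--Thorin theorem, whereas you exploit that the input $R_n\one_{\widetilde S}$ is a single fixed function whose $L^\infty$ norm controls both endpoint outputs, so the elementary log-convexity (Lyapunov/H\"older) inequality $\norm{f}_p\le\norm{f}_1^{2/p-1}\norm{f}_2^{2-2/p}$ applied to the truncated output suffices; this is more elementary, avoids operator interpolation, and yields exactly the exponents $H^{2/p-1}2^{J/p}$ and the constant structure $C^{(R)}\lambda^2\Dn e^{4\lambdaDn}r^2$ of \eqref{eq:remainderbound}.
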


\begin{proof}
	
	We start with the case $p\in[1,2)\cup \{\infty\}$. In view of \eqref{eq:defremainderbias} and \eqref{eq:ftildeK=KFIf}, and arguing as thereafter using the second bound for $|R_n|$ in Lemma \ref{lem:remindertermLog},
	\begin{equation*}
	\norm{(\nunhat-\nu)^{(R)}}_p \leq \frac{(2H)^{\frac{1}{p}}}{\Dn} \norm{K_J\!\left(\!\FII{{R_n}\one_{\widetilde{S}}}\right)}_\infty \lesssim 
	\frac{(2H)^{\frac{1}{p}}}{\Dn}4 (\lambdaDn r)^2 e^{4\lambdaDn}\Xin ,
	\end{equation*}
	where the constant in $\lesssim$ only depends on $K$. Taking the expectation on both sides 
	gives the upper bound in \eqref{eq:remainderbound} when $p=1$ and $p=\infty$. 
	%
	%
	To obtain \eqref{eq:remainderbound} for all $p\in(1,\infty)$, we now improve the bound in the last display for the case $p=2$ and interpolate. Arguing as in the last display,	
	we have that, for all $J\in \Nz$,
	\begin{equation*}
	\norm{(\nunhat-\nu)^{(R)} }_2=  \frac{1}{\Dn}\norm{{K_J\left( \FII{{R_n} \one_{\widetilde{S}}}\right)}}_2  \lesssim \frac{1}{\Dn} \norm{\FII{{R_n}\one_{\widetilde{S}}}}_2   ,
	\end{equation*}
	where the constant in $\lesssim$ only depends on $K$ and, by Plancherel theorem,  the right  side equals
	\begin{align*}\label{eq:reminderboundp=2}
	\frac{1}{\Dn} \norm{{R_n}\one_{\widetilde{S}} }_{2} \lesssim  \frac{1}{\Dn} \sup_{ \xi  \in \R: |\xi| \leq \Xin} |R_n(\xi)|  \, 2^{J/2} \leq \frac{1}{\Dn}4 (\lambdaDn r)^2 e^{4\lambdaDn}2^{J/2}.
	\end{align*}
	This shows \eqref{eq:remainderbound}  for $p=2$ and we can now interpolate to get $p\in (1,2)$ and $p\in(2,\infty)$. For the case $p\in(1,2)$ we fix $J\in \Nz$ and $H\geq 0$, and, for a measurable set $B\subseteq \R$, we define $\mathcal{B}_{B}$ to be its Borel $\sigma$-algebra inherited from $\R$, and $\mu_{B}$ to be the Lebesgue measure on $\R$ restricted to $B$. Furthermore, let us write $L^p(\mu_{B})$, $p \in [1,\infty]$, for the usual Banach spaces of measurable functions arising from $\left( B, \mathcal{B}_{B}, \mu_{B} \right)$. Then, we remark that, from the calculations for  $p=1,2$, it follows that the linear operator
	\begin{align*}
	\tildeK_J:L^\infty\left(\mu_{[-\Xin,\Xin]}\right) \to L^1\left(\mu_{[-H,H]}\right) + L^2\left(\mu_{[-H,H]}\right) 
	\end{align*}
	satisfies, for any $f\in  L^\infty\left(\mu_{[-\Xin,\Xin]}\right)$,
	\begin{equation*}
	\norm{\tildeK_Jf }_{L^1\left(\mu_{[-H,H]}\right)} \lesssim  H2^J \norm{f}_{L^\infty\left(\mu_{[-\Xin,\Xin]}\right)},
	\end{equation*}
	and
	\begin{equation*}
	\norm{\tildeK_Jf }_{L^2\left(\mu_{[-H,H]}\right)} \lesssim  2^{J/2} \norm{f}_{L^\infty\left(\mu_{[-\Xin,\Xin]}\right)},
	\end{equation*}
	where the constants hidden in the notation only depend on $K$. 
	Consequently, the Riesz--Thorin interpolation theorem (cf. 6.27 in \cite{F99}) can be used to conclude that, for any $p\in(1,2)$, $J\in \Nz$, $H\geq 0$ and $f\in  L^\infty\left(\mu_{[-\Xin,\Xin]}\right)$,
	\begin{align}\label{eq:boundLpnormremainderterm}
	\norm{\tildeK_Jf }_{L^p\left(\mu_{[-H,H]}\right)} &\lesssim \left(H2^J\right)^{\left(\frac{2}{p}-1\right)} \left(2^{J/2}\right)^{2\left(1-\frac{1}{p}\right)} \norm{f}_{L^\infty\left(\mu_{[-\Xin,\Xin]}\right)}\notag \\
	&=H^{\left(\frac{2}{p}-1\right)}2^{J/p}  \norm{f}_{L^\infty\left(\mu_{[-\Xin,\Xin]}\right)},
	\end{align}
	where the hidden constant depending on $K$ and $p$. Therefore, \eqref{eq:remainderbound} for $p\in (1,2)$  follows by applying the operator $\tildeK_J$ to $R_n$ and noting that we are on $\Omegantilde\left(2^J,r\right)$ with $0\leq r< e^{-2\lambdaDn}/(4\lambdaDn)$, $2^J\geq 0$ and $n\in \N$ so Lemma \ref{lem:remindertermLog} applies, and taking expectations. The conclusion of the proposition then follows because, by analogous but simpler arguments, for $p\in (2,\infty)$ we have that
	\begin{align}\label{eq:boundLpnormremainderterm2}
	\norm{\tildeK_Jf }_{\Lp}
	&\lesssim 2^{J\left(1-\frac{1}{p}\right)}  \norm{f}_{L^\infty\left(\mu_{[-\Xin,\Xin]}\right)},
	\end{align}
	where the hidden constant depending on $K$ and $p$; this can  be shown directly too by using similar arguments to those in Lemma \ref{lem:Omegantildecomplement}, i.e. by the Hausdorff--Young inequality. 
	
\end{proof}

\begin{prop}\label{prop:mainstochastic}
	Assume $\mu=\nu/\lambda\in \Lp$ for some $p\in[1,\infty]$ and,  if  $p\in [1,2)$, suppose it satisfies Assumption \ref{ass2a} 
	for some $\alpha>2/p-1$ and $q=1$. 
	Let $H\geq 0$ if $p\in[1,2)$ and $H=\infty$ if $p\in[2,\infty]$,  and let  $J\in \Nz$ be such that	$2^J<n/2\left(1-e^{-\lambdaDn}\right)$ if $p\in [2,\infty)$ and $J2^J <n/2\left(1-e^{-\lambdaDn}\right)$  if $p=\infty$. Then, for all $r\leq \frac{1}{\lambdaDn}\frac{1-\exp(-\lambdaDn)}{1+\exp(-\lambdaDn)}$ and $n\in \N$, we have that 
	\begin{equation}\label{eq:mainstochasticbound}
	\E \left[\norm{(\nunhat-\nu)^{(MS)}}_p \one_{\Omegantilde\left(2^J, r\right)}  \right] 	 \leq  2 \lambda e^{2\lambdaDn} L_p M_p'\sqrt{\frac{2^J(1+\one_{\{\infty\}}(p))}{\Tlambda}} +\lambda\frac{e^{3\lambdaDn} }{\sqrt{\Tlambda}}\norm{\mu}_p,
	\end{equation}	
	where 
	\begin{equation}\label{eq:defMp'}
	M_p'\!:=\!\left\{\!
	\begin{array}{ll}
	\!e^{\lambdaDn \left( \norm{ \mu }_{1,\alpha/2}-1\right) }\!\left(  \left(e^{\lambdaDn \norm{  \mu}_{1,\alpha}} \!-\!1\right)\!/\!\left(e^{\lambdaDn}\! -\!1\right) \right)^{\!1/2}\!, & \mbox{if } p\!\in\![1,2),\\
	\!\one_{[2,\infty)}(p)+\norm{\mu}_{p/2}^{1/2}\one_{(2,\infty)}(p)+\left(1\vee \norm{\mu}_{\infty} \right)^{1/2}\one_{ \{\infty\}}(p)
	, & \mbox{if } p\!\in\![2,\infty]. 
	\end{array}	
	\right.
	\end{equation}
\end{prop}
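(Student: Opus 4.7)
I would start from the paper's prepared representations \eqref{eq:mainstochastic1}--\eqref{eq:tildePn-tildeP2}. Setting $g_x := \FII{\varphintilde^{-1}(-\cdot)}\ast K_J(x,\cdot)$, we have
\[
(\hat\nu - \nu)^{(MS)}(x) = \frac{1}{2\pi\Dn}(\Pnhattilde - \Pntilde)g_x \,\one_{[-H,H]}(x),
\]
and \eqref{eq:tildePn-tildeP2} decomposes $\Pnhattilde - \Pntilde$ into an \emph{empirical piece}, a rescaled centered empirical process $((n-n_0)/n_0)(\widehat{\Pn}_n^{J)} - \Pn^{J)})g_x$, and an \emph{$n_0$ piece}, $e^{\lambdaDn}((\E n_0 - n_0)/n_0)\Pn^{J)}g_x$, in which only the one-dimensional randomness of $n_0 \sim \mathrm{Bin}(n,e^{-\lambdaDn})$ remains. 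On $\Omegantilde(2^J,r)$ the standing choice of $r$ bounds the stochastic prefactors $(n-n_0)/(n_0\Dn)$ and $n/n_0$ deterministically by explicit functions of $\lambdaDn$, and conditionally on $n_0$ the empirical piece is based on $n-n_0$ i.i.d.\ copies of $\Pn^{J)}$.

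The $n_0$ piece produces the second summand in \eqref{eq:mainstochasticbound}. Combining $\mathrm{Var}(n_0) \leq \Tlambda e^{-\lambdaDn}$ with a short Fourier-side computation based on $\FT\Pn^{J)}(\xi) = (\varphintilde(\xi)-1)/(e^{\lambdaDn}-1)$ telescopes the convolution in $\Pn^{J)}g_\cdot$ to yield $\norm{\Pn^{J)}g_\cdot}_p \lesssim \Dn\, e^{\lambdaDn}\norm{K_J(\nu)}_p \lesssim \Dn\, e^{\lambdaDn}\norm{\nu}_p$ via \eqref{eq:boundnormKJfpbynormfp}. A Cauchy--Schwarz bound on $\E[|\E n_0 - n_0|/n_0 \,\one_{\Omegantilde}]$ then produces the claimed $\lambda e^{3\lambdaDn}\norm{\mu}_p/\sqrt{\Tlambda}$ contribution.

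The leading summand comes from the empirical piece, and I would prove the anchor bounds for $p \in \{1,2,\infty\}$ and interpolate by Riesz--Thorin, mirroring the proof of Proposition \ref{prop:remainder}. For $p = 2$, Plancherel converts matters into a Fourier-side variance estimate: the per-sample variance of $(\widehat\Pn_n^{J)} - \Pn^{J)})g_x$ is at most $\Pn^{J)}|g_x|^2 \leq e^{2\lambdaDn}\norm{\tildeK_J(x,\cdot)}_\infty^2$, and integrating in $x$ using Condition \ref{cond:4} together with $\norm{\tildeK_J(x,\cdot)}_2^2 \lesssim 2^J$ yields the $\sqrt{2^J/\Tlambda}$ rate --- the key robustness ingredient being $n-n_0 \sim \Tlambda$, not $n$. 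For $p = \infty$, I would combine Condition \ref{cond:3} with the pairing identity $\langle g_x, Q\rangle = \langle K_J(x,\cdot), Q\ast \FII{\varphintilde^{-1}(-\cdot)}\rangle$ to dominate the relevant supremum by $2^J\sup_{t\in\mathcal T}|(\widehat{\Pn}_n^{J)} - \Pn^{J)})\widetilde\kappa_t|$, where $\widetilde{\mathcal K} := \{\FII{\varphintilde^{-1}(-\cdot)}\ast \kappa_t\}_{t\in\mathcal T}$ is a VC class of the same complexity; a standard VC-class maximal inequality (e.g.\ Theorem 3.5.4 in \cite{GN16}) then contributes the extra $\sqrt{\log 2^J}$ factor that distinguishes $L^\infty$ from $L^2$.

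The main obstacle is the $p = 1$ anchor, where Hausdorff--Young fails and the Fourier approach does not transfer. I would instead expand $\FII{\varphintilde^{-1}(-\cdot)}$ as the signed measure $\sum_{i \geq 0}((-\Dn)^i/i!)(\nu^{-})^{\ast i}$ from \eqref{eq:FIvarphintilde-1(-cdot)}, bound $\norm{K_J(\cdot)\one_{[-H,H]}}_1$ term by term, and sum the series using the sub-multiplicative bound $\norm{\nu^{\ast i}}_{1,\alpha} \leq \norm{\nu}_{1,\alpha}^i$ (from $(1+|x+y|)^\alpha \leq (1+|x|)^\alpha(1+|y|)^\alpha$ together with Young's convolution inequality). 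Assumption \ref{ass2a} with $q=1$ is exactly what is needed for convergence, and the exponential moment factors $e^{\lambdaDn\norm{\mu}_{1,\alpha/2}}$, $e^{\lambdaDn\norm{\mu}_{1,\alpha}}$ that appear in $M_p'$ are generated by this summation. Riesz--Thorin interpolation between the three anchors then produces the constant $L_p$ of Remark \ref{rem:ctsminimax} for every $p \in [1,\infty]$, completing the proof.
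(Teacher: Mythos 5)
Your overall architecture matches the paper's: the split of $\Pnhattilde-\Pntilde$ via \eqref{eq:tildePn-tildeP2} into an empirical piece and an $n_0$ piece, the use of the bounds on $n_0$ valid on $\Omegantilde$, the binomial-variance treatment of the $n_0$ piece together with $\norm{\Pn^{J)}(d(\cdot-z))}_p\le\norm{\mu}_p$, and the recognition that the conditional sample size $n-n_0$ (of order $\Tlambda$, not $n$) is what drives robustness. Where you diverge is the leading term, and this is where the gap lies. The paper does not prove anchors at $p\in\{1,2,\infty\}$ and interpolate: it applies Minkowski's inequality for integrals together with the domination $\abs{\FII{\varphintilde^{-1}}(dz)}\le e^{\lambdaDn}\Pn(dz)$ (see \eqref{eq:FIvarphintilde-1leqelambdaDnPn}) to move the convolution with $\FII{\varphintilde^{-1}(-\cdot)}$ onto the law rather than onto the kernel class, so that, conditionally on $n_0$, the inner quantity is exactly the $\Lp$-risk of a standard kernel/projection density estimator built from $n-n_0$ i.i.d.\ draws of the translated law $\Pn^{J)}(\cdot-z)$. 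It then cites the i.i.d.\ variance bounds of Theorem 5.1.5 and Remark 5.1.6 in \cite{GN16} for \emph{every} $p\in[1,\infty]$ simultaneously (Condition \ref{cond:3} is precisely what certifies the empirical-process input at $p=\infty$), and finally integrates over $z$ against $\Pn(dz)$, which is what produces $\varUpsilon_p(z)\le M_p'$ and, for $p\in[1,2)$, the exponential moment factors you correctly anticipated. The hypotheses on $J$ in the statement exist exactly so that this theorem applies with sample size $m\ge n(1-e^{-\lambdaDn})/2$.

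Your substitute has two concrete problems. First, the interpolation step: Riesz--Thorin, as used in Proposition \ref{prop:remainder}, interpolates the deterministic operator $\tildeK_J:L^\infty\to L^p$ and does not apply to $\E\norm{\cdot}_p$ of an empirical process; the available substitute (pathwise log-convexity of $L^p$-norms plus H\"older in expectation) anchors the bound for $p\in(2,\infty)$ on the $p=\infty$ case, whose constant contains $\norm{\mu}_\infty$ and hence requires $\mu\in\Linf$, and anchors $p\in(1,2)$ on the $p=1$ case, which requires $\alpha>1$; both are strictly stronger than the proposition's hypotheses ($\mu\in\Lp$, resp.\ $\alpha>2/p-1$), and neither reproduces the stated constants $L_pM_p'$ of Remark \ref{rem:ctsminimax} (e.g.\ the factor $\norm{\mu}_{p/2}^{1/2}$ for $p\in(2,\infty)$, which comes from a direct variance computation, not interpolation). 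Second, your $p=\infty$ anchor rests on the unproved claim that $\{\FII{\varphintilde^{-1}(-\cdot)}\ast\kappa_t\}_{t\in\mathcal T}$ is a VC class ``of the same complexity'': convolving a uniformly bounded VC class with a fixed finite signed measure lands in (a multiple of) the closed convex hull of its translates, and VC-type covering bounds are not automatically preserved under such operations; this is exactly the difficulty the paper's Minkowski-plus-domination device is designed to bypass. (Your $p=1$ sketch is also vague on the stochastic part: what is needed there is the pointwise Cauchy--Schwarz variance bound followed by integrability of the root-variance, which is where Assumption \ref{ass2a} with $q=1$ and the series summation enter; bounding deterministic kernel norms term by term would lose the $1/\sqrt{m}$ factor.) To close the gap, replace the anchors-and-interpolation plan by the paper's reduction to the i.i.d.\ density-estimation bounds applied conditionally on $n_0$ to the translated laws, integrated against $\Pn(dz)$.
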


\begin{proof}

	By the calculations that led to \eqref{eq:mainstochastic2}, we have that on $\Omegantilde\left(2^J,r\right)$ with $r\leq 1$ and for all $n\in \N$, $J\in \Nz$ and $x\in \R$,
	\begin{equation*}
	\frac{1}{2\pi}\tildeK_J\left(\frac{{\varphinhattilde}{-\varphintilde}}{\Dn\varphintilde} \right)(x) =\frac{1}{\Dn}\int_{\R} \left( \int_{\R} K_J(x, y) \left( \Pnhattilde - \Pntilde\right) (d(y+z))  \right) \FII{ \varphintilde^{-1}(-\cdot)} (dz), 
	\end{equation*}
	and, using Minkowski's inequality for integrals together with the fact that, due to \eqref{eq:FIvarphintilde-1(-cdot)} and \eqref{eq:repPintro},
	\begin{equation}\label{eq:FIvarphintilde-1leqelambdaDnPn}
	\abs{ \FII{ \varphintilde^{-1}}(dz)} \leq e^{\lambdaDn} \Pn(dz), \qquad z\in \R,
	\end{equation} 
	we have that, under the same assumptions and for all $H\geq 0$ and $p\in [1,\infty]$,
	\begin{align}\label{eq:mainstochasticequiv}
	\norm{(\nunhat-\nu)^{(MS)}}_p \leq \frac{e^{\lambdaDn}}{\Dn} \bigintssss_{\R} \norm{\int_{\R} K_J(\cdot, y) \left( \Pnhattilde - \Pntilde\right) (d(y-z)) }_p \Pn(dz).
	\end{align}	
	We now use \eqref{eq:tildePn-tildeP2} and note that, on $\Omegantilde\left(2^J,r\right)$ with $r\leq \frac{1}{\lambdaDn}\frac{1-\exp(-\lambdaDn)}{1+\exp(-\lambdaDn)}$ and by simple algebra,
	\begin{equation}\label{eq:n0boundsOmegantilde}
	n \frac{e^{-\lambdaDn}}{2} 	\leq n_0 \leq n \frac{1+e^{-\lambdaDn}}{2}.
	\end{equation}
	Then, by Fubini's theorem and the lower bound for $n_0$ on the left side, for all $H\geq 0$ if $p\in[1,2)$, and all $n\in \N, J\in \Nz$ and $p\in [1,\infty]$,
	\begin{align}\label{eq:mainstochasticprebound}
	&\!\!\!\!\!\!\!\E \left[\norm{(\nunhat\!-\nu)^{(MS)}}_p \!\one_{\Omegantilde}  \right]  \notag\\
	& \qquad \leq 2\frac{e^{2\lambdaDn}}{\Dnn}\! \!\bigintsss_{\R}  \! \E \left[\left({n\!-\!n_0}\right)  \norm{\int_{\R} K_J(\cdot, y) \left(  \widehat{\Pn \,}^{J)}_{\!n} \!\!-\! \Pn^{J)}\right) \!(d(y\!-\!z)) }_p\!\!\one_{\Omegantilde} 
	\right]\!\Pn(dz) \notag  \\
	&\qquad \quad +2\frac{e^{3\lambdaDn}}{\Dnn} \E \left| {n_0\!-\!\E n_0}\right|  \! \bigintssss_{\R}    \norm{\int_{\R} \!K_J(\cdot, y) 	\Pn^{J)} (d(y\!-\!z)) }_p\! \Pn(dz)  .
	\end{align}	
	We now bound the second term on the right  side: by \eqref{eq:boundnormKJfpbynormfp}, the definition of $ \Pn^{J)}$ in Section \ref{sec:unification}, Young's inequality for convolutions and $\norm{\nu}_1=\lambda$, the $\norm{\cdot}_p$ norm therein is, up to a constant depending on $K$ and for all $ z\in \R, J\in \Nz$ and $p\in [1,\infty]$, 
	bounded above by
	\begin{equation}\label{eq:boundnormp PnJ}
	\norm{ \Pn^{J)} (d(\cdot-z))}_p \leq  \frac{1}{ e^{\lambdaDn} -1} \sum_{i =1}^{\infty} \frac{ \Dnsuper{i} \norm{\nu(d(\cdot-z))}_p  \lambda^{i-1} }{i!} \leq \frac{ \norm{\nu}_p}{\lambda};
	\end{equation}
	then, in view of Jensen's inequality, 
	$n_0 \sim \text{Bin}(n,e^{-\lambdaDn})$  and $1-\exp(-x) \leq \min\{1,x\}$ for all $x \geq 0$, 
	\begin{align}\label{eq:maxineqvarphiparametric}
	\E \left| {n_0\!-\!\E n_0}\right| \!\leq\! \sqrt{\E \left| n_0 \!-\! \E n_0 
		\right|^2} \! \leq\! 
	\sqrt{n{e^{-\!\lambdaDn} (1\!-\!e^{-\!\lambdaDn})}} 
	\!\leq\! 
	\sqrt{\Tlambda\min\left\{\!\frac{1}{\lambdaDn}, 1\right\}} ,		
	\end{align}
	and	the second term on the right hand side of \eqref{eq:mainstochasticprebound} is bounded above by the second summand in \eqref{eq:mainstochasticbound}. To bound the first term, we write the expectation in it as
	\begin{align}\label{eq:EEmainstochastic}
	\E \left[m  \E \left[   \norm{\int_{\R} K_J(\cdot, y) \left(  \widehat{\Pn \,}^{J)}_{n} - \Pn^{J)}\right) \!(d(y-z)) }_p \one_{\Omegantilde}\bigg|_{n_0=n-m}  \right]\right] .
	\end{align}
	Note that, in view of the upped bound on $n_0$ in \eqref{eq:n0boundsOmegantilde}, $m=n-n_0\geq n/2(1-e^{-\lambdaDn})$. Then, due to the present assumptions on $J$ and on $\nu$, together with the fact that Condition \ref{cond:3} summarises all that is required by the empirical process theory results used in case 4 of the proof of Theorem 5.1.5 in \cite{GN16}, we can apply the variance bounds for the standard problem of density estimation from i.i.d. observations (cf. Theorem 5.1.5 and Remark 5.1.6 in \cite{GN16}) to bound the inner expectation; we conclude that, for any $z\in \R, n\in \N, p\in [1,\infty]$ and $J$ as in the statement of the current proposition, the last display is bounded above by
	\begin{align*}
	L_p \, \varUpsilon_p(z)\,  \sqrt{2^J(1+\one_{\{\infty\}}(p))} \E \sqrt{n-n_0},
	\end{align*}
	where, by Condition \ref{cond:2} 
	and the fact that $\nu$ satisfies Assumption \ref{ass2a} for some $\alpha>2/p-1$ and $q=1$, 
	$L_p$ is as defined in Theorem \ref{thm:upperbound} and 
	\begin{equation*}
	\varUpsilon_p(z):=\norm{ \Pn^{J)} (d(\cdot-z)) }_{1,\alpha}^{1/2} \one_{[1,2)}(p) + \one_{[2,\infty]}(p) +\norm{ \Pn^{J)}(d(\cdot-z))}_{p/2}^{1/2} \one_{(2,\infty]}(p).
	\end{equation*}
	%
	Due to \eqref{eq:boundnormp PnJ} it follows that, for any $z\in \R$ and $p\geq 2$, $\varUpsilon_p(z) \leq  M_p'$, where 
	$M_p'$ is as defined in \eqref{eq:defMp'}. Arguing as in \eqref{eq:maxineqvarphiparametric},  
	\begin{equation}\label{eq:Esqrtn-n0bound}
	\E \sqrt{n-n_0} \leq \sqrt{n\left(1-e^{-\lambdaDn}\right)} \leq \sqrt{\Tlambda \min\{1/(\lambdaDn), 1\}},
	\end{equation}
	and this finishes showing that the first term on the right hand side of \eqref{eq:mainstochasticprebound} is bounded by the first summand in \eqref{eq:mainstochasticbound} when $p\geq 2$. In view of the definition of $\Pn^{J)}$  in Section \ref{sec:unification} and using that $(1+|x+y|)\leq (1+|x|)(1+|y|)$ for all $x,y\in \R$ together with Young's inequality for convolutions repeatedly, we have that, for $p\in[1,2)$, $\varUpsilon_p(z)$ is bounded 
	by
	\begin{equation*}
	\left( \frac{\left( 1\!+\!|z|\right)^{\alpha}}{e^{\lambdaDn} \!-\!1}  \sum_{i =1}^{\infty} \frac{\Dnsuper{i} \norm{\nu (d(\cdot\!-\!z))\left( 1\!+\!|\cdot - z|\right)^{\alpha} }_1 \norm{\nu  }_{1,\alpha}^{i-1}}{i!}\right)^{\!\frac{1}{2}} \!\!	 \leq\! \left( 1\!+\!|z|\right)^{\alpha/2}\left(  \frac{e^{\Dn \norm{\nu }_{1,\alpha}} \!-\!1}{e^{\lambdaDn}\! -\!1} \right)^{\!\frac{1}{2}}\!.
	\end{equation*}
	Due to \eqref{eq:Esqrtn-n0bound}, for $p\in[1,2)$ the first summand on the right hand side in \eqref{eq:mainstochasticprebound} is bounded above by
	\begin{equation*}
	2\lambda  e^{2\lambdaDn} M_p' \, e^{ -\Dn \left( \norm{ \nu}_{1,\alpha/2} -\lambda\right) }  \int_{\R} \left(1+|z|\right)^{\alpha/2}\Pn(dz)  \:\sqrt{\frac{2^J(1+\one_{\{\infty\}}(p))}{\Tlambda} },
	\end{equation*}
	which in turn is bounded above by the first summand in \eqref{eq:mainstochasticbound} because, in view of \eqref{eq:repPintro} and arguing as in the second-to-last display, the integral is bounded by $e^{\Dn \left( \norm{ \nu }_{1,\alpha/2} -\lambda\right) }$.
	
\end{proof}

\begin{proof}[Proof of Theorem \ref{thm:upperbound}]
	%
	%
	
	Trivially, $\E \one_{\Omegantilde\left( 2^J, r\right)} \leq 1$ for any $2^J,r\geq0$. Thus, the four terms in \eqref{eq:splitnunhat-nu} are bounded in Lemmata \ref{lem:Omegantildecomplement}, \ref{lem:bias} and Propositions \ref{prop:remainder}, \ref{prop:mainstochastic}, so we simply have to verify that  under the present assumptions and choices of $H,U,2^J$, the conditions therein are satisfied and the bounds they provide give rise to the bound in \eqref{eq:upperbound}. 
	
	Most of the assumptions in such results 
	are assumed here in the same form. For the remaining, we check them in order of appearance. If Assumption \ref{ass2a} is satisfied for some $\alpha>0$ and $q=1$, then $\int_{\R} \log^\beta (\max\{|x|,e\}) \mu(dx)<\infty$ for any $\beta>0$ and, thus, under the conditions on $\nu$ when $p\in [1,2)$, it satisfies Assumption \ref{ass2b}. 
	In Proposition \ref{prop:mainstochastic} we also assumed that $J\geq 0$ if $ p\in [1,2)$,  $2^J<n/2\left(1-e^{-\lambdaDn}\right)$ if $p\in [2,\infty)$, and  $J2^J <n/2\left(1-e^{-\lambdaDn}\right)$  if $p=\infty$. These conditions are trivially satisfied for the choice $2^J=\twoJn=o\left(\Tlambda\right)$ for  $\Tlambda$  large enough because, due to $e^{-x}\leq (1+x)^{-1}$ for all $x\geq 0$,
	\begin{equation}\label{eq:n/2(1-e-lambdaDn)>Tlambda}
	n/2\left(1-e^{-\lambdaDn}\right)\geq  \frac{\Tlambda}{2(1+\lambdasupnDn)}.
	\end{equation}

	Next, we check the convergence rates and constants of the bounds.  With the familiar choice $2^J=\twoJn$, the first terms in \eqref{eq:biasbound} and \eqref{eq:mainstochasticbound} are of order $\varepsilon_{\Tlambda}$. Due to $\supnDn<\infty$, introducing the term $e^{2\lambdaDn}$ in  \eqref{eq:mainstochasticbound} into $M_p'$ defined in \eqref{eq:defMp'}, noticing that $\norm{\mu}_{1,\alpha}>1$ for any $\alpha>0$ and using the same arguments employed to conclude \eqref{eq:CleqbarC}, it follows that $M_p'\leq M_p$. 
	Then, all this justifies the form of the whole term of order $\varepsilon_{\Tlambda}$ in \eqref{eq:upperbound}. 	Furthermore, in view of \eqref{eq:varepsilonm} and \eqref{eq:Hn},  for $p\in [1,2)$,
	\begin{equation*}
	\Hn^{\!\!\!\!-\alpha}= \Tlambda^{ \alpha \left(\frac{1}{\alpha}\frac{s}{2s+1}-  \vartheta_H\right)} \varepsilon_{\Tlambda}=o\left( \varepsilon_{\Tlambda}\right).
	\end{equation*}
	Hence, the last term of order $o\left(\varepsilon_{\Tlambda}\right)$  in \eqref{eq:upperbound} arises from the second summand in \eqref{eq:biasbound}. Due to $\supnDn<\infty$, the last summand in \eqref{eq:mainstochasticbound} gives rise to the first term carrying the quantity $\norm{\mu}_p$. To justify the appearance of the other three terms of order $o\left(\varepsilon_{\Tlambda}\right)$, we need to argue that the bounds in \eqref{eq:remainderbound} and \eqref{eq:Omegantildecomplementbound} indeed give rise to them for some choice of $r=r_{\Tlambda}$ satisfying that, for some $\delta>0$,
	\begin{equation}\label{eq:rbounds}
	\sqrt{\frac{ \log^{1+2\delta}\left(e+\twoJn\right)}{\Tlambda}  }\leq r_{\Tlambda}< \min\left\{ \frac{ e^{-2\lambdaDn}}{4\lambdaDn}, \frac{1}{\lambdaDn}\frac{1-e^{-\lambdaDn}}{1+e^{-\lambdaDn}} \right\}.
	\end{equation}
	Note that, due to $0<\Dn\leq \supnDn<\infty$ and $e^{-x}-1\leq x \left(e^{-\bar x}-1\right)/\bar{x}$ for all $0\leq x\leq \bar x$, the upper bound is bounded from below by
	\begin{equation}\label{eq:rlowerupperbound}
	\min\left\{ \frac{ e^{-2\lambdasupnDn}}{4\lambdasupnDn}, \frac{1-e^{-\lambdasupnDn}}{2\lambdasupnDn} \right\},
	\end{equation}
	so, by the polynomial growth of $2^J=\twoJn$ in $\Tlambda$, the conditions on $r_{\Tlambda}$ are satisfied for $\Tlambda$ large enough by taking
	\begin{equation}\label{eq:rn}
	r_{\Tlambda}:=\Tlambda^{\!\!\vartheta_r} \qquad \mbox{with } -1/2 < \vartheta_r<0.
	\end{equation}
	We deal with the bound in \eqref{eq:remainderbound} first. To show that it is $o\left(\varepsilon_{\Tlambda}\right)$ for $p\in[1,2)$, we need that
	\begin{equation*}
	2\vartheta_r   +\vartheta_H \left(2/p-1\right) + \frac{1/p}{2s+1} < -\frac{s}{2s+1} \quad \Longleftrightarrow \quad \vartheta_r  < -\frac{1}{2} \left(\vartheta_H\left(2/p-1\right)   + \frac{s+1/p}{2s+1}\right).
	\end{equation*}
	In other words, to have room to choose $\vartheta_r$ as in \eqref{eq:rn} and guaranteeing that the bound in \eqref{eq:remainderbound}  is $o\left(\varepsilon_{\Tlambda}\right)$, it is necessary that
	\begin{equation*}
	\vartheta_H\left(2/p-1\right)   + \frac{s+1/p}{2s+1}<1 \quad \Longleftrightarrow \quad \vartheta_H < \frac{1}{2/p-1}\frac{1+s-1/p}{2s+1},
	\end{equation*}
	as assumed in \eqref{eq:Hn}. 
	To show the same conclusion for $p\in[2,\infty]$, we need 
	\begin{equation*}
	2\vartheta_r   +  \frac{1-1/p}{2s+1} < -\frac{s}{2s+1} \quad \Longleftrightarrow \quad \vartheta_r  < -\frac{1}{2} \frac{1+s-1/p}{2s+1}.
	\end{equation*}
	Simple algebra shows that the upper bound on the right hand side is strictly larger than $-1/2$ and, thus, we can choose $\vartheta_r$ as in \eqref{eq:rn} and making the bound in \eqref{eq:remainderbound}  be $o\left(\varepsilon_{\Tlambda}\right)$. 
	Then, due to $\supnDn<\infty$,
	the remainder term gives rise to the second-to-last term of order $o\left(\varepsilon_{\Tlambda}\right)$ in \eqref{eq:upperbound}.	
	Lastly, by the expressions for $\twoJn,\Un,\Hn$ and $r_{\Tlambda}$, the bound in 	\eqref{eq:Omegantildecomplementbound} can be written as
	\begin{equation*}
	\left( \left(\frac{\Tlambda}{(\log \Tlambda)^{\one_{\{\infty\}}(p)}}\right)^{\frac{1}{2s+1}} \Tlambda^{\vartheta_U} \Tlambda^{\vartheta_H/p} +\norm{\nu}_p\right)   e^{- \widetilde L \Tlambda^{1+2\vartheta_r} } \qquad \mbox{ if } p\in[1,2)
	\end{equation*}
	and
	\begin{equation*}
	\left( \left(\frac{\Tlambda}{(\log \Tlambda)^{\one_{\{\infty\}}(p)}}\right)^{\frac{1-1/p}{2s+1}} \Tlambda^{\vartheta_U}  +\norm{\nu}_p\right)   e^{- \widetilde L \Tlambda^{1+2\vartheta_r} } \qquad \mbox{ if } p\in[2,\infty],
	\end{equation*}
	and it is clear that,  under the present conditions $\vartheta_U, \vartheta_H<\infty$ and $\vartheta_r>-1/2$, this term vanishes exponentially fast in $\Tlambda$. Therefore, introducing a harmless extra $\lambda$ factor in the first summand inside the outer brackets, we conclude that these last two displays give rise to the first term of order $o\left(\varepsilon_{\Tlambda}\right)$ in \eqref{eq:upperbound} and to the second carrying the quantity $\norm{\mu}_p$. This ends the proof of Theorem \ref{thm:upperbound}.

\end{proof}

\subsection{Proof of Theorem \ref{thm:adaptation}}\label{sec:proofs:adaptation} 


Let $\Jmax$, 
$\J$, $\JThat$ and $\twop{j}$ be as introduced right before Theorem \ref{thm:adaptation}, and define
\begin{equation}\label{eq:defJ*}
\JTstar
:=\min \left\{ j\in \J:   C^{(B)}\norm{\nu}_{B_{p,\infty}^s} 2^{-sj } \leq \sqrt{{\twop{j}}/{\Tlambda}} \right\},
\end{equation}
where $C^{(B)}$ is as in \eqref{eq:upperbound}, so that
\begin{equation}\label{eq:J*sim}
2^{\JTstar} \sim \left(   C^{(B)}\norm{\nu}_{B_{p,\infty}^s} \right)^{\frac{2}{2s+1}} 2^{J_{\Tlambda} }.
\end{equation}
Then, analogously to the proofs of classical Lepski\u{i}'s method, a key quantity to study is $\Pr\big( \JThat = j \big)$ for any $j>\JTstar$; this is the content of Lemma \ref{lem:JhatT}, which precedes the actual proof of Theorem \ref{thm:adaptation}. 
To prove it, we have to  understand the concentration properties of 
$\norm{\nuThat-\nu}_p$: 
by \eqref{eq:presplitnunhat-nu} with $\hat{\nu}=\nuThat$ and by the triangle inequality
, it can be bounded, on $\Omegantilde\left( 2^J, r\right)$  for all $J\in \Nz$ and $0\leq r < \min\{1,e^{-2\lambdaDn}/(4\lambdaDn)\}$, for any $H\geq0$ if $p\in[1,2)$ 
and for $T$ large enough
, as
\begin{align}\label{eq:allpreconcentration}
\norm{\nuThat - \nu}_p  \leq & \norm{\left(\nuThat - \nu\right)^{(MS)}}_p +  \norm{\left(\nuThat - \nu\right)^{(R)}}_p +  \norm{\left(\nuThat - \nu\right)^{(B)}}_p.
\end{align}
The last term does not carry any stochasticity, and concentration of the first two terms is studied in Proposition \ref{prop:mainstochasticconcentration} and Lemma \ref{lem:remainderconcentration}. To show the former we first introduce 
some preparatory results. 

Recall from \eqref{eq:FIvarphintilde-1(-cdot)} that 
$\FII{ \varphintilde^{-1}(-\cdot)}$ is a finite (signed) measure on $\R$
, and from Section \ref{sec:unification} that $\Pn^{J)}$ is the law of $\ZJ{1}$ and $\widehat{\Pn \,}^{J)}_{\!n}$ is its empirical version on $\Omegantilde\left(2^J,r\right)$ with $J,r$ as in \eqref{eq:n0boundsOmegantilde} due to $n_0<n$ by the second inequality therein. 
The inequalities in \eqref{eq:n0boundsOmegantilde} also imply that $n_0\!
>\!0$ and $n\!-\!n_0\!\geq \!n/2(1\!-\!e^{-\lambdaDn})\!>\!0$. Then, using this together with \eqref{eq:mainstochastic1}, \eqref{eq:tildePn-tildeP2}, the same arguments leading to \eqref{eq:mainstochastic2}, 
and the triangle inequality, 
we have that, on $\Omegantilde\left( 2^J, r\right)$ with $J\in \Nz$ and $r\leq \frac{1}{\lambdaDn}\frac{1-\exp(-\lambdaDn)}{1+\exp(-\lambdaDn)}$, and for all $H\geq0$ when $p\in[1,2)$, and all $x\in \R$,
\begin{align}\label{eq:mainstochasticpreconcentration}
\Dn \abs{  (\nuThat-\nu)^{(MS)} }(x) \leq & {\frac{n-n_0}{n_0}}\abs{\left( \widehat{\Pn \,}^{J)}_{\!n} - \Pn^{J)}\right) \left(\FII{ \varphintilde^{-1}(-\cdot)} \ast K_J(x,\cdot) \right)} \\
&+ \abs{e^{\lambdanhatDn} - e^{\lambdaDn}} \abs{\FII{ \varphintilde^{-1}(-\cdot)} \left( \Pn^{J)} \ast K_J(x,-\cdot) \right)}.\notag
\end{align}
Concentration of the second summand will follow from Lemma \ref{lem:OmegantildesubseteqOmegan}. The first summand, however, is a product of a parametric and a nonparametric quantity and concentration of the latter has to be derived directly
. To do so, we follow the strategy in Section 5.1.2 in \cite{GN16} to derive the concentration properties of the usual convolution and projection density estimators using Talagrand's inequality:
for $p\in[1,\infty)$ we note that, due to the separability of $\Lp$ and the Hahn--Banach theorem, if $f\!\in\! \Lp$ there exists a countable subset $B_0^q$, $q$ conjugate to $p$, of the unit ball of $\L{q}$, such that, for all $f\in \Lp$,
\begin{equation*}
\norm{f}_p=\sup_{g\in B_0^q} \left\lvert \int_{\R} f(x)g(x)dx\right\rvert;
\end{equation*}
this, together with Fubini's theorem, 
implies that, for any $p\in[1,\infty)$,
\begin{equation}\label{eq:representationnormdual}
\left(n-n_0\right)\norm{\left( \widehat{\Pn \,}^{J)}_{\!n} - \Pn^{J)}\right) \left(\FII{ \varphintilde^{-1}(-\cdot)} \ast K_J(\cdot,\cdot) \right)}_p= 
\sup_{G \in \G} \left\lvert \sum_{i=1}^{n-n_0} G\left(\ZJ{i}\right)\right\rvert 
\end{equation}
for the class of functions $\G=\G_p(J)$ indexed by the $g\in B_0^q$ given by
\begin{align*}
\G&
:
=\left\{ x\mapsto \left(\FII{ \varphintilde^{-1}(-\cdot)}  \!\ast\! \!\int_{\R}\! K_J(y, \cdot)g(y)dy\right)\! (x) \right.\\
&\qquad \qquad \,\, \left.-\Pn^{J)} \!\left(\FII{ \varphintilde^{-1}(-\cdot)}  \!\ast \!\!\int_{\R} \!K_J(y, \cdot)g(y)dy \right) \!
\right\}\!;
\end{align*}
in addition, for $p=\infty$, we trivially have that \eqref{eq:representationnormdual} is true for 
\begin{align*}
\G=\G_{\infty}(J):=\left\{ x\mapsto \FII{ \varphintilde^{-1}(-\cdot)} \ast K_J(x,  \cdot) - \Pn^{J)} \!\left(\FII{ \varphintilde^{-1}(-\cdot)}  \ast K_J(x, \cdot)\right) 
\right\}.
\end{align*}
Representation  \eqref{eq:representationnormdual}, together with the following lemma, allows the use of Talagrand's inequality (Bousquet version, cf. Theorem 3.3.9 in \cite{GN16}) to describe the concentration of the quantity therein around its mean. 

\begin{lem}\label{lem:Usigma2bounds}
	Let $\mu=\nu/\lambda \in\Lp$ for some $p\in [1,\infty]$ and let $J\in \Nz$. 
	Then, 
	\begin{equation*}
	\sup_{G \in \G}\norm{G}_{\infty}  \leq 2e^{\lambdaDn} \norm{\barK}_p 2^{J(1-1/p)}  =: \bar G 
	\end{equation*}
	and
	\begin{equation*}
	\sup_{G \in \G} \E G^2\left(\ZJ{1}\right) \leq  e^{\lambdaDn} \norm{\mu}_p  \norm{\barK}_{2(1-1/(1+p))}^2   2^{J(1-1/p)}=:\sigma^2.
	\end{equation*}
\end{lem}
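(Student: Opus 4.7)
Both bounds reduce to estimates on the uncentred part $H_g:=\alpha*T_g$, where $\alpha:=\FII{\varphintilde^{-1}(-\cdot)}$ is the finite signed measure from \eqref{eq:FIvarphintilde-1(-cdot)} and $T_g(z):=\int_{\R} K_J(y,z)g(y)\,dy$ for $g\in B_0^q$ when $p\in[1,\infty)$ (with the convention $T_g:=K_J(x,\cdot)$ when $p=\infty$), since every $G\in\G$ takes the form $G=H_g-\Pn^{J)}(H_g)$, whence $\norm{G}_\infty\le 2\norm{H_g}_\infty$ and $E G^2(\ZJ{1})\le E H_g^2(\ZJ{1})$. For the $L^\infty$ estimate, \eqref{eq:FIvarphintilde-1leqelambdaDnPn} applied to the reflected measure ensures that the total variation of $\alpha$ is at most $e^{\lambda\Dn}$, so Young's inequality for the convolution of a signed measure with a bounded function gives $\norm{H_g}_\infty\le e^{\lambda\Dn}\norm{T_g}_\infty$; then Condition \ref{cond:2} provides $|K_J(y,z)|\le 2^J\barK(2^J|y-z|)$, and H\"older's inequality with $\norm{g}_q\le 1$ (for $q$ conjugate to $p$) yields $\norm{T_g}_\infty\le 2^{J(1-1/p)}\norm{\barK}_p$, completing the first claim.

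\textbf{Variance setup.} For the variance I would begin from
\[
E H_g^2(\ZJ{1}) \;=\; \int_{\R} |H_g|^2\, p_n^{J)}
\]
and apply H\"older's inequality with exponents $p$ and $p'=p/(p-1)$ to obtain $\int |H_g|^2 p_n^{J)}\le \norm{p_n^{J)}}_p\,\norm{H_g}_{2p'}^2$. The density-norm bound $\norm{p_n^{J)}}_p\le\norm{\mu}_p$ follows from the series expansion $p_n^{J)}=(e^{\lambda\Dn}-1)^{-1}\sum_{i\ge 1}\Dn^i \nu^{*i}/i!$ together with Young's convolution inequality $\norm{\mu^{*i}}_p\le\norm{\mu}_p\norm{\mu}_1^{i-1}=\norm{\mu}_p$ (recall $\norm{\mu}_1=1$). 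For the $L^{2p'}$-norm of $H_g$, a further application of Young's inequality with the signed measure $\alpha$ yields $\norm{H_g}_{2p'}\le e^{\lambda\Dn}\norm{T_g}_{2p'}$.

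\textbf{Kernel scaling and main obstacle.} The crux is to bound $\norm{T_g}_{2p'}$ by exactly the kernel norm and $J$-scaling appearing in $\sigma^2$. I would dominate $|T_g(z)|\le (L_h*|g|)(z)$ with $L_h(u):=2^J\barK(2^J|u|)$ coming from Condition \ref{cond:2}, and apply Young's inequality $\norm{L_h*|g|}_{2p'}\le\norm{L_h}_s\norm{g}_q$ with exponents satisfying $1+1/(2p')=1/s+1/q$. Solving for $s$ with $q=p/(p-1)$ gives $s=2p/(p+1)=2(1-1/(1+p))$, precisely the index in the claim; a direct change of variables $v=2^J u$ then computes $\norm{L_h}_{2p/(p+1)}=2^{J(1-1/p)/2}\norm{\barK}_{2p/(p+1)}$, yielding $\norm{T_g}_{2p'}^2\le 2^{J(1-1/p)}\norm{\barK}_{2(1-1/(1+p))}^2$. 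Chaining the preceding estimates gives the stated variance bound (the exponent of the constant $e^{c\lambda\Dn}$ may require a slight tightening via a sharper Cauchy--Schwarz on $\alpha$). The main obstacle is the algebraic matching of three distinct H\"older/Young exponents simultaneously---the index $p$ for the density, the conjugate $2p'$ for $H_g$, and the kernel exponent $2p/(p+1)$---together with careful bookkeeping of the $e^{\lambda\Dn}$ factors that accumulate through each step.
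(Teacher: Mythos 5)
Your proof takes essentially the same route as the paper's: the envelope bound via $\norm{G}_\infty\le 2\norm{H_g}_\infty$, Young's inequality with $\norm{\FII{\varphintilde^{-1}(-\cdot)}}_{TV}\le e^{\lambdaDn}$ and H\"older against the majorant $2^J\barK(2^J|\cdot|)$ from Condition 2; and the variance via H\"older against $\Pn^{J)}$ with $\norm{\Pn^{J)}}_p\le\norm{\mu}_p$, followed by Young with the kernel exponent $2p/(p+1)$ and the scaling $\norm{2^J\barK(2^J\cdot)}_{2p/(p+1)}^2=2^{J(1-1/p)}\norm{\barK}_{2p/(p+1)}^2$, which is exactly the paper's argument. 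The only point of divergence is the one you flag yourself: a literal chaining yields $e^{2\lambdaDn}$ rather than the stated $e^{\lambdaDn}$ in $\sigma^2$ (the paper's own proof, read literally, has the same feature), and since $\supnDn<\infty$ this is an immaterial constant in all downstream uses.
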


From now on we write $\Prunder{B}(A):=\Pr(A\cap B)$. 

\begin{proof}

	We prove the result for $p\in [1,\infty)$, as the case $p=\infty$ follows by the same (in fact, simpler) arguments.	To control the first quantity 
	we use that, by Young's inequality for convolutions, \eqref{eq:FIvarphintilde-1leqelambdaDnPn} and $\norm{\Pn}_{TV}= \norm{\Pn^{J)}}_{TV}=1$,
	\begin{equation*}
	\sup_{G \in \G}\norm{G}_{\infty}\leq 2e^{\lambdaDn} \sup_{g\in B_0^q}\norm{\int_{\R} |K_J(y, \cdot)|| g(y)|dy}_\infty\leq 2e^{\lambdaDn} \norm{\barK}_p 2^{J(1-1/p)},
	\end{equation*}
	where the last inequality follows by the generalised H{\"o}lder's inequality and Condition \ref{cond:2}. 
	To control the second quantity 
	we remark that $G\left(\ZJ{1}\right)$ is centred so this, together with the same arguments we just used, implies that 
	\begin{align*}
	\sup_{G \in \G} \E G^2\left(\ZJ{1}\right) &\leq \sup_{g\in B_0^q} \bigintssss_{\R} \left( \FII{ \varphintilde^{-1}(-\cdot)}  \ast \int_{\R} K_J(y, \cdot)g(y)dy  \right)^2\! (x)  \Pn^{J)} (x)\,dx\\
	&\leq  \norm{\Pn^{J)}}_p \:\:\sup_{g\in B_0^q} \norm{ \FII{ \varphintilde^{-1}(-\cdot)}  \ast \int_{\R} K_J(y, \cdot)g(y)dy }_{2q}^2 \\
	&\leq \norm{\mu}_p e^{\lambdaDn}  2^{J(1-1/p)} \norm{\barK}_{2p/(1+p)}^2,
	\end{align*}
	where in the last inequality we used \eqref{eq:boundnormp PnJ} with $z=0$ and the general version of Young's inequality for convolutions (cf. Proposition 8.9 in \cite{F99}). Therefore, the conclusion follows by rewriting the subscript of the last quantity.

\end{proof}

\begin{prop}\label{prop:mainstochasticconcentration}
	Let $\supnDn <\!\infty$ and,  
	for some $0<\underline{\lambda} \leq \bar{\lambda}<\infty$ and $B_1,B_2\geq 1$, assume $\nu=\lambda \mu$ with $\lambda\in[\underline{\lambda},\bar{\lambda}]$, $\norm{\mu}_p\leq B_1$ for some $p\in[1,\infty]$ and,  if $p\in[1,2)$, $\norm{\mu}_{1,\alpha} \leq B_2$ for some $\alpha>2/p-1$. 
	Let $\Jmax$, $\J$, $\twop{l}$ and $\mathcal{V}$ be as introduced in \eqref{eq:Jmax}, \eqref{eq:twopl} and \eqref{eq:mathcalV}. 
	Additionally, let $H\geq 0$ if $p\in[1,2)$ and $H=\infty$ if $p\in[2,\infty]$, $r\leq \frac{1}{\lambdaDn}\frac{1-\exp(-\lambdaDn)}{1+\exp(-\lambdaDn)}$ and $M^{(MS)}\geq \max\!\left\{1,C_2^{(MS)}\right\}^2$, where $C_2^{(MS)}(\alpha,\bar \lambda, \barK, B_1,B_2,p)$ is as defined in \eqref{eq:C2MS} and $\barK$ is as introduced in Condition \ref{cond:2}. 
	Then, for any $j,l\!\in\! \J$ such that $j\!\leq\! l$, and any T large enough,
	\begin{align}\label{eq:mainstochasticconcentration}
	\sup_{ \nu\in \mathcal{V}_{p,s,\alpha, \alpha',\beta}\left(\underline{\lambda}, \bar{\lambda}, B_1, B_2,\infty, \infty\right) }&\PrOmegantilde \left( \norm{(\nuThat(j)-\nu)^{(MS)} }_p \geq M^{(MS)}  \sqrt{ \frac{\twop{l}}{T}} \right) \notag \\
	& \qqsub 
	\leq  e^{-\sqrt{M^{(MS)}}2^{j/p}(j+\log T)^{\one_{\{\infty\}}(p)}} +  e^{-C_3^{(MS)} \Tlambdaunderline}  + 2 e^{- C_4^{(MS)} 2^j},
	\end{align}
	where
	\begin{equation*}
	C_3^{(MS)}=C_3^{(MS)}\left(M^{(MS)}\right):=\frac{\left( {M^{(MS)}}/ \left(C_2^{(MS)}\right)^2 - 1\right)^2 }{2+\frac{2}{3}  {M^{(MS)}}/\left(C_2^{(MS)}\right)^2 }
	\end{equation*}
	and  $C_4^{(MS)}=C_4^{(MS)}\left(M^{(MS)}\right)$ with
	\begin{equation*}
	C_4^{(MS)}:=\frac{e^{-5\lambdabarsupnDn} \left({ M^{(MS)}} \right)^2}{\left(2 \norm{\barK}_1  B_1 + \supnDn M^{(MS)}\right)\left(2\bar\lambda \left(2 \norm{\barK}_1 B_1 + \supnDn M^{(MS)}\right)  + \frac{2}{3}{ M^{(MS)}}\right) }.
	\end{equation*}
	
\end{prop}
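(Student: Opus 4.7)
The plan is to exploit the on-$\Omegantilde(2^j,r)$ decomposition \eqref{eq:mainstochasticpreconcentration}, which after the two-sided control $n e^{-\lambdaDn}/2 \leq n_0 \leq n(1+e^{-\lambdaDn})/2$ from \eqref{eq:n0boundsOmegantilde} and the uniform bound \eqref{eq:boundnormp PnJ}, upper bounds $\Dn \norm{(\nuThat(j)-\nu)^{(MS)}}_p$ by the sum of (i) the $L^p$-norm of a genuine empirical process in $\widehat{\Pn \,}^{J)}_{\!n}-\Pn^{J)}$ (rescaled by $(n-n_0)/n_0 \lesssim e^{\lambdaDn}$) and (ii) the parametric factor $|e^{\lambdanhatDn}-e^{\lambdaDn}|$ multiplied by a deterministic $\mathcal{V}$-uniform quantity of order $\norm{\barK}_1 B_1$. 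A union bound over these two summands exceeding half of the threshold $M^{(MS)}\sqrt{\twop{l}/T}$, combined with separate tail bounds (Talagrand on the empirical process and Bernstein on the parametric factor), will produce the three exponential terms on the right of \eqref{eq:mainstochasticconcentration}.

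For the parametric term, the implication \eqref{eq:elambdanhatDn-elambdaDn>stuff} reduces the required tail bound to a Bernstein-type inequality for $n_0\sim\mathrm{Bin}(n,e^{-\lambdaDn})$ whose variance is crucially of order $T_\lambda$ rather than $n$, thanks to $1-e^{-\lambdaDn}\leq \lambdaDn$. Applying Bernstein's ratio to this accelerated variance, exactly as in the proof of Lemma \ref{lem:OmegantildesubseteqOmegan:b}, yields the exponential $e^{-C_3^{(MS)} T_{\underline\lambda}}$; replacing $\lambda$ by $\underline\lambda$ in the exponent produces the uniform-in-$\mathcal{V}$ statement and the explicit Bernstein-ratio form of $C_3^{(MS)}$ as stated.

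For the empirical process term, I would condition on $n-n_0$, which on $\Omegantilde$ lies in a deterministic window of length proportional to $T_\lambda$, use the dual representation \eqref{eq:representationnormdual} to rewrite the $L^p$-norm as $\sup_{G\in\G_p(j)}\bigl|\sum_{i=1}^{n-n_0}G(\ZJ{i})\bigr|$, and apply Talagrand's inequality in the Bousquet version (Theorem 3.3.9 in \cite{GN16}). Lemma \ref{lem:Usigma2bounds} supplies the envelope $\bar G$ and per-sample variance $\sigma^2$, both of order $2^{j(1-1/p)}$ up to $\mathcal{V}$-uniform constants; the expectation of the supremum is in turn controlled by $C_2^{(MS)} \sqrt{T_\lambda \twop{j}}$ via the same empirical-process argument used in the proof of Proposition \ref{prop:mainstochastic}, with the VC-entropy bounds from Condition \ref{cond:3} producing the $(j+\log T)$ correction that is absorbed into $\twop{j}$ when $p=\infty$. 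The assumption $M^{(MS)}\geq (C_2^{(MS)})^2$ then absorbs this expectation into half of the threshold for every $l\geq j$, after which the standard splitting $\exp\bigl(-x^2/(2V+2\bar G x/3)\bigr) \leq \exp\bigl(-x^2/(4V)\bigr) + \exp\bigl(-3x/(4\bar G)\bigr)$ of Bousquet's tail yields the Gaussian contribution $2 e^{-C_4^{(MS)} 2^j}$ (on dividing by $(n-n_0)\sigma^2 \sim T_\lambda 2^{j(1-1/p)}$) and the sub-exponential contribution $e^{-\sqrt{M^{(MS)}} 2^{j/p}(j+\log T)^{\one_{\{\infty\}}(p)}}$ (on dividing by $\bar G\sim 2^{j(1-1/p)}$, with a square-root split of the deviation that produces the $\sqrt{M^{(MS)}}$ factor needed to keep the bound summable in $j$).

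The main obstacle will be the careful bookkeeping of constants uniformly over $\nu\in\mathcal{V}$ and across the four qualitatively distinct regimes $p\in\{1\}\cup(1,2)\cup[2,\infty)\cup\{\infty\}$: extracting the exact expression of $C_2^{(MS)}$ in \eqref{eq:C2MS} from the constants of Remark \ref{rem:ctsminimax} after replacing all $\mu$-dependent norms by their $\mathcal{V}$-uniform surrogates (for instance $\norm{\mu}_{p/2}^{1/2}\leq C_p' B_1^{(p-2)/(2(p-1))}$ and $\norm{\mu}_{1,\alpha/2}\leq \norm{\mu}_{1,\alpha}\leq B_2$); confirming that the $p=\infty$ log-correction $(j+\log T)$ produced via the VC-entropy of Condition \ref{cond:3} is indeed the one absorbed into $\twop{j}$ as defined in \eqref{eq:twopl}; and verifying that for every pair $j\leq l\in\J$ the threshold $M^{(MS)}\sqrt{\twop{l}/T}$ accommodates both the $p=1$ case (where the sub-exponential tail tends to dominate) and the $p=\infty$ case (where the logarithmic penalty enters both the threshold and the exponent) without the Gaussian and sub-exponential contributions swapping dominance.
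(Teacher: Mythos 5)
Your overall strategy coincides with the paper's: the same split of \eqref{eq:mainstochasticpreconcentration} into an empirical-process part and a parametric part, the dual representation \eqref{eq:representationnormdual}, Bousquet's version of Talagrand's inequality fed by Lemma \ref{lem:Usigma2bounds}, the expectation bound recycled from the analysis of \eqref{eq:EEmainstochastic} in Proposition \ref{prop:mainstochastic}, and Bernstein's inequality for $n_0$ via \eqref{eq:elambdanhatDn-elambdaDn>stuff}. Your one real deviation --- using the two-sided constraint defining $\Omegantilde$ to confine $n-n_0$ to a deterministic window of order $\Tlambda$ --- is in fact correct (with $\lambdaDn r\leq(1-e^{-\lambdaDn})/(1+e^{-\lambdaDn})$ one gets $n-n_0\leq \frac{3}{2}\Tlambda$, although this is sharper than what \eqref{eq:n0boundsOmegantilde} records and must be rederived), and it would simplify the argument: the paper only uses the one-sided bound $n_0\geq ne^{-\lambdaDn}/2$ and therefore must split on whether $n-n_0$ exceeds a multiple of $\Tlambda$; the bad half of that split, handled by a second Bernstein application and calibrated by the hypothesis $M^{(MS)}\geq (C_2^{(MS)})^2$, is precisely what produces $e^{-C_3^{(MS)}\Tlambdaunderline}$. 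With your window that event is empty and no $C_3^{(MS)}$-type term arises anywhere.

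The genuine problems are in the bookkeeping of which exponential comes from which sub-event, and as written your plan does not assemble into \eqref{eq:mainstochasticconcentration}. First, the parametric factor cannot yield $e^{-C_3^{(MS)}\Tlambdaunderline}$: by \eqref{eq:elambdanhatDn-elambdaDn>stuff} the relevant event forces a deviation of $n_0$ of order $M^{(MS)}\sqrt{T\twop{l}}$, so Bernstein with variance of order $\Tlambda$ gives an exponent of order $\min\bigl\{\twop{l}/(\lambdaDn),\sqrt{T\twop{l}}\bigr\}\gtrsim 2^j$, which in the low-frequency regime with $j,l$ small stays bounded while $\Tlambdaunderline\to\infty$; this step is exactly the source of the paper's $2e^{-C_4^{(MS)}2^j}$ term, as the $\norm{\barK}_1 B_1$-dependence of $C_4^{(MS)}$ already indicates. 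Second, the Gaussian piece of your two-term split of Bousquet's tail has exponent of order $\bigl(M^{(MS)}\bigr)^2\twop{l}/\bigl(\lambdaDn\, 2^{j(1-1/p)}\bigr)\gtrsim 2^{j/p}$, not $2^j$ (dividing $t^2\sim \bigl(M^{(MS)}\bigr)^2 T\twop{l}$ by $(n-n_0)\sigma^2\sim \Tlambda 2^{j(1-1/p)}$ leaves only a $2^{j/p}$ factor once $\twop{l}\geq 2^j$ is used), so for $p>1$ it cannot produce the $C_4^{(MS)}$-term. If you correct these attributions --- parametric Bernstein gives the $2e^{-C_4^{(MS)}2^j}$ term, and both pieces of the Talagrand tail give exponents $\gtrsim\sqrt{M^{(MS)}}\,2^{j/p}(j+\log T)^{\one_{\{\infty\}}(p)}$ after invoking $M^{(MS)}\geq\max\{1,C_2^{(MS)}\}^2$ --- your route closes, but it proves a bound with different constants (and without the $C_3^{(MS)}$-term) rather than the stated one; the paper instead applies Bousquet at the single level $x=\sqrt{M^{(MS)}}2^{j(1/p-1)}\twop{l}$ and verifies that the entire deviation fits under the threshold, which is what yields the first term of \eqref{eq:mainstochasticconcentration} exactly.
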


\begin{proof}
	
	First note that, by Minkowski's inequality for integrals, Condition \ref{cond:2} 
	and \eqref{eq:FIvarphintilde-1leqelambdaDnPn},
	\begin{equation*}
	\norm{\FII{ \varphintilde^{-1}(-\cdot)} \left( K_j(x,-\cdot) \ast \Pn^{J)}\right)}_p \leq e^{\lambdaDn}   \norm{2^j\barK\left(2^j \cdot\right)}_1  \norm{\Pn^{J)}}_p \leq e^{\lambdaDn} \norm{\barK}_1  \norm{\mu}_p,
	\end{equation*}
	where in the last inequality we used  \eqref{eq:boundnormp PnJ} with $z=0$. Then, due to the conditions on $H,j,r$ assumed herein, we trivially use \eqref{eq:mainstochasticpreconcentration}, \eqref{eq:representationnormdual} and the lower bound for $n_0$ in \eqref{eq:n0boundsOmegantilde} to bound the left hand side in the display of the proposition by
	\begin{align*}
	&\PrOmegantilde\left( \sup_{G \in \G_p(j)} \left\lvert \sum_{i=1}^{n-n_0} G\left(\ZJ{i}\right)\right\rvert  \geq \frac{M^{(MS)}}{4e^{\lambdaDn}} \sqrt{ {T \twop{l}}}\right)\\
	&\qquad + \PrOmegantilde \left( \abs{e^{\lambdanhatDn} - e^{\lambdaDn}} \geq \frac{\Dn M^{(MS)}}{2 e^{\lambdaDn}\norm{\barK}_1  \norm{\mu}_p} \sqrt{ \frac{\twop{l}}{T}}\right).
	\end{align*}
	We treat these two terms separately and in reverse order. In view of  \eqref{eq:elambdanhatDn-elambdaDn>stuff}, $l\in \J$ and the bound for $2^{\Jmax}$ in \eqref{eq:Jmax}, the second summand is bounded above by
	\begin{align*}
	\Pr \left( \abs{n_0-\E n_0} \geq \frac{ M^{(MS)}e^{-3\lambdaDn} \sqrt{T \twop{l} } }{2 \norm{\barK}_1 \! \norm{\mu}_p \!+\! \Dn \!M^{(MS)}} \right) \leq 2 \exp\left( -  C_4^{(MS)} 2^j\right)
	\end{align*}
	where in the inequality we applied the Bernstein type-of inequality in the display right after \eqref{eq:elambdanhatDn-elambdaDn>stuff}, the bound for $2^l\leq2^{\Jmax}$ again, $l\geq j$ and $\norm{\mu}_p\leq CB_1$; we remark that the bound in the last display is uniform in $\mathcal{V}$. To bound the first summand, let $A_{n-n_0}=A_{n-n_0}\left( \ZJ{1}, \ldots, \ZJ{n-n_0}\right)$  denote the set inside the large brackets. Then, in view of the present conditions on $j,r$, we can use \eqref{eq:n0boundsOmegantilde} to justify that
	\begin{equation}\label{eq:PrAn-n0leqPrAm}
	\PrOmegantilde\left(A_{n-n_0}\right) \leq  \E \left[\one_{n_0=n-m} \one_{m\geq n/2(1-e^{-\lambdaDn}) 
	} \Pr\left(A_m\right)\right], 
	\end{equation}
	where the expectation and the probability on the right hand side are over $n_0$ and the non-zero increments, respectively. To bound the latter we use Lemma \ref{lem:Usigma2bounds}: in view of the bounds there, we can apply inequality (3.101) in Theorem 3.3.9 in \cite{GN16}  with \begin{align*}
	\nu_m
	:=  m e^{\lambdaDn} 2^{J(1-1/p)} \left( 4  \norm{\barK}_p  \E\sup_{G \in \G_p(j)} \left\lvert \sum_{i=1}^{m} G\left(\ZJ{i}\right)\right\rvert 
	+  \norm{\mu}_p  \norm{\barK}_{2(1-1/(1+p))}^2   \right),
	\end{align*}
	and, using the same elementary simplification of (5.35) resulting in (5.36) in \cite{GN16}, we obtain that, for any $p\in[1,\infty]$ and $x \geq 0$,
	\begin{align}\label{eq:mainstochasticconcentrationpre}
	\Pr \left( \sup_{G \in \G_p(j)} \left\lvert \sum_{i=1}^{m} G \left(\ZJ{i}\right)\right\rvert \right.  &\geq \frac{3}{2}  \E   \sup_{G \in \G_p(j)} \left\lvert \sum_{i=1}^{m} G  \left(\ZJ{i}\right)\right\rvert  \notag \\
	&\qquad \qquad \,\,+  \sqrt{ 2 m e^{\lambdaDn}  \norm{\mu}_p   \norm{\barK}_{2\left(1 - \frac{1}{1+p}\right)}^2   2^{j\left(1 - \frac{1}{p}\right)}  x} \notag \\
	& \qquad \qquad \qquad \qquad \qquad \quad + \left.\frac{14 e^{\lambdaDn} \norm{\barK}_p   }{3}2^{j\left(1-\frac{1}{p}\right)} x\right) \leq e^{-x}.
	\end{align}
	Due to \eqref{eq:representationnormdual}, Fubini's theorem, Minkowski's inequality for integrals and \eqref{eq:FIvarphintilde-1leqelambdaDnPn}, the expectation in the last display is bounded above by
	\begin{align*}
	& m \E \norm{\int_{\R} \left( \int_{\R} K_j(\cdot, y) \left( \widehat{\Pn \,}^{J)}_{\!m} - \Pn^{J)}\right) (d(y+z))  \right) \FII{ \varphintilde^{-1}(-\cdot)} (dz)}_p \\
	&\qquad \leq  e^{\lambdaDn} m \int_{\R} \E \norm{ \int_{\R} K_j(\cdot, y) \left( \widehat{\Pn \,}^{J)}_{\!m} - \Pn^{J)}\right) (d(y-z)) } \Pn(dz)\\
	& \qquad\qquad\leq e^{\lambdaDn} L_p M_p' \sqrt{m 2^j \left(1+j\one_{\{\infty\}}(p)\right)},
	\end{align*}
	where the last inequality holds for $T$ large enough, $\lambda\geq \underline{\lambda}>0$ and $m\geq n/2(1-e^{-\lambdaDn})$, and it follows from the analysis of \eqref{eq:EEmainstochastic} because the only use of the indicator of $\Omegantilde\left(2^J, r\right)$ therein is that, for any $J\in \Nz$, the bound for $m$ on the right hand side of \eqref{eq:PrAn-n0leqPrAm} is satisfied and, furthermore, for $T$ large enough only depending on $\underline \lambda$ and in view of the bounds for $\eta$ in \eqref{eq:Jmax} and \eqref{eq:n/2(1-e-lambdaDn)>Tlambda},  
	\begin{equation*}
	2^j\leq 2^{\Jmax} <n/2\left(1-e^{-\underline\lambda \Dn}\right)\leq n/2\left(1-e^{-\lambdaDn}\right) \quad \mbox{ if } p\in [2,\infty) 
	\end{equation*}
	and
	\begin{equation*}
	j2^j\leq \Jmax2^{\Jmax} <n/2\left(1-e^{-\underline\lambda \Dn}\right)\leq n/2\left(1-e^{-\lambdaDn}\right) \quad  \mbox{ if } p=\infty.
	\end{equation*}
	Hence, taking $x=\sqrt{M^{(MS)}}2^{j\left(\frac{1}{p}-1\right)} \twop{l}$ and recalling that $j\leq l$ and ${M^{(MS)}}\geq 1$, we have that for $T$ large enough, for any  $\lambda\geq \underline{\lambda}>0$ and $m\geq n/2(1-e^{-\lambdaDn})$, the right hand side in the outermost brackets in \eqref{eq:mainstochasticconcentrationpre} is bounded above by
	\begin{align*}
	&\sqrt{M^{(MS)}}e^{\lambdaDn} \sqrt{m \twop{l}} \left(\frac{3}{2}   L_p M_p' +     \sqrt{ 2  \norm{\mu}_p   \norm{\barK}_{2\left(1 - \frac{1}{1+p}\right)}^2 }+ \frac{14 \norm{\barK}_p   }{3}\sqrt{\frac{ \twop{l}  }{m}} \right) 
	\end{align*}
	and, using $l\leq {\Jmax}$ and \eqref{eq:Jmax} together with $m\geq n/2(1-e^{-\lambdaDn})$, $e^{-x}\leq (1+x)^{-1}$ valid for any $x\geq0$,  $\norm{\mu}_p\leq CB_1$, $\norm{\mu}_{1,\alpha} \leq B_2$ if $p\in[1,2)$,  $\norm{\mu}_{p/2} \leq \norm{\mu}_p^{\frac{p-2}{p-1}}$ if $p\in(2,\infty)$ as noted at the end of Remark \eqref{rem:ctsminimax}, $M_p'\leq \bar M_p$, where the latter is defined in \eqref{eq:defbarMp}, and  $\Dn\leq \supnDn$, for $T$ large enough this display is bounded by	
	\begin{equation*}
	\sqrt{m  \twop{l}}\frac{ \sqrt{M^{(MS)}}C_2^{(MS)}}{4\sqrt{\lambda}e^{\lambdabarsupnDn}}.
	\end{equation*}
	Consequently, for any $m\geq n/2(1-e^{-\lambdaDn})$,  $\lambda\geq \underline{\lambda}>0$, $\mu$ satisfying the assumptions herein and $T$ large enough,
	\begin{align*}
	\Pr\left(A_m\right) &= \one_{\left\{m\leq T\left( \sqrt{\lambda}\frac{e^{\lambdabarsupnDn}}{e^{\lambdaDn}}\frac{\sqrt{M^{(MS)}}}{C_2^{(MS)}}\right)^2\right\}}  \Pr \left(A_m\right) + \one_{\left\{m> T\left( \sqrt{\lambda}\frac{e^{\lambdabarsupnDn}}{e^{\lambdaDn}}\frac{\sqrt{M^{(MS)}}}{C_2^{(MS)}}\right)^2\right\}} \Pr \left(A_m\right)\\
	&\leq e^{-\sqrt{M^{(MS)}}2^{j\left(\frac{1}{p}-1\right)} \twop{l}}+ \one_{\left\{m> T\left( \sqrt{\lambda}\frac{e^{\lambdabarsupnDn}}{e^{\lambdaDn}}\frac{\sqrt{M^{(MS)}}}{C_2^{(MS)}}\right)^2\right\}},
	\end{align*}
	and, recalling that $\twop{l}:=2^l(1+(l+\log T)\one_{\{\infty\}}(p))$ and $l\geq j$, and due to $e^{-x}\leq (1+x)^{-1}$ for any $x\geq0$ and $\Dn\leq \supnDn$, the right hand side in \eqref{eq:PrAn-n0leqPrAm} is bounded by
	\begin{equation*}
	e^{-\sqrt{M^{(MS)}}2^{j/p} \left(1+(j+\log T)\one_{\{\infty\}}(p)\right)}+ \Pr\left(En_0-n_0>  \lambda T \left(
	\frac{M^{(MS)}}{\left(C_2^{(MS)}\right)^2} - 1
	\right)\right). 
	\end{equation*}
	Note that, 
	in view of $M^{(MS)}\geq \left(C_2^{(MS)}\right)^2
	$, the quantity on the right hand side of the inequality inside the probability is positive. Then, applying Bernstein's inequality as argued right after \eqref{eq:elambdanhatDn-elambdaDn>stuff} and using that $\lambda\geq   \underline{\lambda}$, we have that the second summand in the last display is bounded by the second summand in \eqref{eq:mainstochasticconcentration}, and this concludes the proof.

\end{proof}

\begin{lem}\label{lem:remainderconcentration}
	Let $\supnDn <\!\infty$ and, 
	for some $0<\underline{\lambda} \leq \bar{\lambda}<\infty$, $\beta>1$ and $B\geq 1$, assume $\nu=\lambda \mu$ with $\lambda\in[\underline{\lambda},\bar{\lambda}]$ and $\norm{ \log^{\beta} (\max\{|\cdot|,e\}) \mu }_1\leq B$. Let $\Jmax$, $\J$ and $\twop{l}$ be as introduced in \eqref{eq:Jmax} and \eqref{eq:twopl}. 
	Additionally,  let $H=\infty$ if $p\in[2,\infty)$ and, otherwise, let it be as in \eqref{eq:Hn'} satisfying, 
	if $p\in(1,2)$, that  $\theta_H-2 \theta_H' -1\leq 0$ and $\theta_H'<\frac{1-1/p}{2(1/p-1/2)}$. 
	Then, if $0 \leq r <e^{-2\lambdaDn}/(4\lambdaDn)$, we have that, for any  $j,l\in \J$ such that $ j\leq l$, 
	all  $M^{(R)}>0$, $p\in[1,\infty]$ and  $T$ large enough depending on $\underline{\lambda}, \bar{\lambda}$ and $B$,
	\begin{align}\label{eq:remainderconcentration}
	\sup_{\stackrel{ \text{\scriptsize $\nu=\lambda \mu: \lambda\in[\underline{\lambda},\bar{\lambda}],$}}{ \norm{ \log^{\beta} (\max\{|\cdot|,e\}) \mu }_1\leq B}}\!\!\PrOmegantilde \! &\left( \norm{(\nuThat(j)-\nu)^{(R)} }_p \geq M^{(R)}  \sqrt{ \frac{\twop{l}}{T}} \right) \notag\\
	& \qqsub \!\! \leq \left\{ \begin{array}{ll}
	3T^{-C_2^{(R)} \left(\log T\right)^{\frac{\eta}{2}\left( 1-\theta_H\right)-1
	}},& \mbox{ if } p=1,\\
	3\exp\left(-C_2^{(R)} T^{\frac{1}{2} - \left(\frac{1}{p}-\frac{1}{2}\right) \left( 2\theta_H' +1\right) }  \right), & \mbox{ if } p\in(1,2),\\
	3\exp\left(-C_2^{(R)} T^{\frac{1}{p}} \right), & \mbox{ if } p\in[2,\infty),\\
	3T^{-C_2^{(R)} \left(\log T\right)^{\frac{1}{2}(\eta -1)}}, & \mbox{ if } p=\infty.
	\end{array}
	\right.
	\end{align}
	where $\eta>2/(1-\theta_H)$ if $p=1$ and $\eta>1$ if $p=\infty$, and, for ${L} $ and 
	$C_1^{(R)}:=C^{(R)}\left(p,K\right)$ as in Theorem \ref{thm:varphivarphin:c} 
	and  
	Proposition \ref{prop:remainder}, respectively,
	\begin{equation*}
	C_2^{(R)}=C_2^{(R)}\left(M^{(R)}\right):= {L} \frac{e^{-7\lambdabarsupnDn}\left(M^{(R)}\right)^2}{4{ \lambdabarsupnDn} C_1^{(R)}}.
	\end{equation*}	
	
\end{lem}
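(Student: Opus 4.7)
The plan rests on the pathwise bound already inside the proof of Proposition \ref{prop:remainder}: the inequalities \eqref{eq:boundLpnormremainderterm}--\eqref{eq:boundLpnormremainderterm2} combined with Lemma \ref{lem:remindertermLog} yield, deterministically on $\Omegantilde(2^j,\rho)$ for any admissible $\rho$,
\begin{equation*}
\norm{(\nuThat(j)-\nu)^{(R)}}_p \;\le\; C_1^{(R)}\lambda^2 \Dn e^{4\lambdaDn}\,\rho^2\, \sigma_p(j,H),
\end{equation*}
where $\sigma_p(j,H):=H^{2/p-1}2^{j/p}$ for $p\in[1,2)$ and $\sigma_p(j,H):=2^{j(1-1/p)}$ for $p\in[2,\infty]$. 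The idea is then to tune $\rho$ to match the target scale by introducing an auxiliary radius
\begin{equation*}
(r^*)^2 \;:=\; \frac{M^{(R)}\sqrt{\twop{l}/T}}{C_1^{(R)}\bar\lambda^2\supnDn\, e^{4\lambdabarsupnDn}\,\sigma_p(j,H)},
\end{equation*}
so that on $\Omegantilde(2^j,r^*)$ the bound $\norm{(\nuThat(j)-\nu)^{(R)}}_p\le M^{(R)}\sqrt{\twop{l}/T}$ holds pathwise.

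Consequently the event of interest is contained in $\Omegantildecomplement(2^j,r^*)$, and irrespective of the $r$ appearing inside $\PrOmegantilde$,
\begin{equation*}
\PrOmegantilde\!\left(\norm{(\nuThat(j)-\nu)^{(R)}}_p\ge M^{(R)}\sqrt{\twop{l}/T}\right)\;\le\; \Pr\!\left(\Omegantildecomplement(2^j,r^*)\right)\;\le\; 3\exp\!\left(-\widetilde L\,\Tlambda (r^*)^2\right),
\end{equation*}
where the last step applies Lemma \ref{lem:OmegantildesubseteqOmegan:b} after checking its two hypotheses. The upper bound $r^*\le 1$ is non-trivial only in the $p\in(1,2)$ range, where $H=H_{T,j}$ is $j$-dependent, and it is precisely there that the extra constraints $\theta_H-2\theta_H'-1\le 0$ and $\theta_H'<(1-1/p)/(2(1/p-1/2))$ enter; the matching lower bound $r^*\ge\sqrt{\log^{1+2\delta}(e+2^j)/\Tlambda}$ then follows for $T$ large from the fact that $\Tlambda(r^*)^2\to\infty$.

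What is left is a case-by-case lower bound on $\Tlambda(r^*)^2$. Substituting $H=T^{\theta_H/2}2^{j(\theta_H'-\theta_H/2)}$ for $p\in[1,2)$ (resp.\ $H=\infty$ otherwise) and using $l\ge j$ together with the aforementioned constraints to see that the net exponent of $2^j$ in $\Tlambda(r^*)^2$ is non-positive, the worst admissible $j$ is $\Jmax$ with $2^{\Jmax}\sim T/\log^{\eta}T$. Short algebraic manipulations then yield the polynomial rates $T^{1/2-(1/p-1/2)(2\theta_H'+1)}$ for $p\in(1,2)$ and $T^{1/p}$ for $p\in[2,\infty)$, and the poly-logarithmic orders $(\log T)^{\eta(1-\theta_H)/2}$ and $(\log T)^{(\eta+1)/2}$ for $p=1$ and $p=\infty$ respectively. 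The latter two convert to the stated $T^{-C_2^{(R)}(\log T)^\alpha}$ form via the identity $e^{-c(\log T)^{\alpha+1}}=T^{-c(\log T)^\alpha}$, which also explains why the conditions $\eta>2/(1-\theta_H)$ (for $p=1$) and $\eta>1$ (for $p=\infty$) imposed in \eqref{eq:Jmax} are exactly what is needed for the exponent $\alpha$ to be positive. The main technical hurdle is the $p\in(1,2)$ bookkeeping: the net exponent of $2^j$ in $\Tlambda(r^*)^2$ collapses to $(2/p-1)(1+2\theta_H'-\theta_H)/2$, and the sign constraints on $(\theta_H,\theta_H')$ are exactly what converts the resulting rate from a logarithmic into a polynomial-in-$T$ one.
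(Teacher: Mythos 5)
Your proposal is correct and is essentially the paper's own argument: the pathwise quadratic bound coming from Lemma \ref{lem:remindertermLog} together with \eqref{eq:boundLpnormremainderterm}--\eqref{eq:boundLpnormremainderterm2}, reduction of the event to the complement of an $\Omegantilde$-type set at an effectively tuned radius (the paper phrases this as a tail bound for $\sup_{|\xi|\le \twoJmax\Xi} n|\varphinhattilde-\varphintilde|$ and then invokes Lemma \ref{lem:OmegantildesubseteqOmegan:b}, exactly your $r^*$ device), followed by the same worst-case $j=l=\Jmax$ bookkeeping in which the constraints on $\theta_H,\theta_H',\eta$ produce the four stated rates. The only discrepancies are immaterial: your exponent comes out linear in $M^{(R)}$ rather than matching the stated $(M^{(R)})^2$ in $C_2^{(R)}$, the reported net $2^j$-exponent for $p\in(1,2)$ has its sign flipped (it is $-(1/p-1/2)(1+2\theta_H'-\theta_H)\le 0$, consistent with your earlier ``non-positive'' claim), and the condition $\theta_H'<\frac{1-1/p}{2(1/p-1/2)}$ is what makes the final power of $T$ positive (and hence validates the lower-bound requirement $r^*\ge\sqrt{\log^{1+2\delta}(e+2^j)/\Tlambda}$), rather than the check $r^*\le 1$, which holds automatically.
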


\begin{proof}

	Note that, due to $j\geq 1>0$ and $0\leq r< e^{-2\lambdaDn}/(4\lambdaDn)$, we can use \eqref{eq:boundLpnormremainderterm} and \eqref{eq:boundLpnormremainderterm2} together with Lemma \ref{lem:remindertermLog}, $j\leq l \leq \Jmax$ and the fact that $\inf_{\xi \in \R} |\varphintilde(\xi)|\geq e^{-\lambdaDn}$, to conclude that, on $\Omegantilde\big(2^j, r\big)$,
	\begin{align*}
	\norm{(\nuThat(j)-\nu)^{(R)} }_{p} &\leq C_1^{(R)} \frac{e^{2\lambdaDn}}{\Dn} \left(  H_{T,j}^{\left(\frac{2}{p}-1\right)}2^{\frac{j}{p}} \one_{[1,2)}(p) + 2^{j\left(1-\frac{1}{p}\right)} \one_{[2,\infty]}(p) \right) \\
	& \quad \times \sup_{|\xi|\leq 2^{\Jmax}\Xi} \left| \varphinhattilde(\xi)-\varphintilde(\xi)\right|^2.
	\end{align*}
	By simple algebra and using that $\Dn\leq \supnDn$, it follows that  the  left hand side in \eqref{eq:remainderconcentration} is bounded above by
	\begin{align}\label{eq:remainderintermediateconcentration}
	&\Pr\bigg(    \sup_{|\xi|\leq 2^{\Jmax} \Xi} n\left| {\varphinhattilde(\xi)-\varphintilde(\xi)}\right| \geq 2 e^{\lambdaDn} 
	\Tlambda \frac{M^{(R)}e^{-2\lambdasupnDn}}{2\lambda\sqrt{C_1^{(R)}\supnDn} } \left( T^{-1} \twop{l} \right)^{1/4}  \notag \\
	& \qquad \qquad\qquad\qquad\qquad\qquad\qquad \, \times \left(  H_{T,j}^{-\left(\frac{1}{p}-\frac{1}{2}\right)}2^{-\frac{j}{2p}} \one_{[1,2)}(p) + 2^{\frac{j}{2}\left(\frac{1}{p}-1\right)} \one_{[2,\infty]}(p) \right)\bigg).
	\end{align}
	In the rest of the proof we deal with the cases $p\in[1,2)$ and $p\in[2,\infty]$ separately. In view of \eqref{eq:Hn'}, for $p\in[1,2)$  the right hand side in the inequality (ignoring the constant quotient and the term before it) is
	\begin{align*}
	T^{-\frac{1}{4}}2^{\frac{l}{4}} T^{-\frac{\theta_H}{2}\left(\frac{1}{p}-\frac{1}{2}\right)} 2^{\frac{j}{2}\left( \theta_H - 2\theta_H' \right)\left(\frac{1}{p}-\frac{1}{2}\right)}2^{-\frac{j}{2p}}&= T^{-\frac{1}{2}\left(\frac{1}{2}+ \theta_H\left(\frac{1}{p}-\frac{1}{2}\right)\right)} 2^{\frac{l-j}{4}+\frac{j}{2}\left(\frac{1}{p}-\frac{1}{2}\right) \left( \theta_H - 2\theta_H' -1\right)}\\
	&\geq T^{-\frac{1}{2}\left(\frac{1}{2}+ \theta_H\left(\frac{1}{p}-\frac{1}{2}\right)\right)} 2^{\frac{l}{2}\left(\frac{1}{p}-\frac{1}{2}\right) \left( \theta_H - 2\theta_H' -1\right)}\\
	&=: t(T,l), 
	\end{align*}
	where the inequality follows due to $j\leq l$ and $\theta_H - 2\theta_H' -1\leq  0$, the latter being an assumption if $p\in(1,2)$ and true for $p=1$ because, then, $\theta_H'=0$ and $\theta_H<1$. Notice that, due to $\theta_H - 2\theta_H' -1\leq 0$ again together with the bound for $2^{\Jmax}$ in \eqref{eq:Jmax}, for all $l\in \J$ and $T>0$
	\begin{equation*}
	T^{-\frac{1}{2}\left(\frac{1}{2}+ \left(\frac{1}{p}-\frac{1}{2}\right) \left( 2\theta_H'+1\right)\right)} \left(\log T\right)^{-\frac{\eta}{2}\left(\frac{1}{p}-\frac{1}{2}\right) \left( \theta_H - 2\theta_H' -1\right)  } \leq t(T,l) \leq T^{-\frac{1}{2}\left(\frac{1}{2}+ \theta_H\left(\frac{1}{p}-\frac{1}{2}\right)\right)},
	\end{equation*}
	where $\eta>4$ if $p=1$ and $\eta=0$ if $p\in(1,2)$, and, in view of the bounds for $\theta_H$ and $\theta_H'$ in 
	\eqref{eq:thetaHthetaH'} 
	together with assumption $\theta_H'<\frac{1-1/p}{2(1/p-1/2)}$ for $p\in(1,2)$,
	\begin{equation*}
	\frac{1}{2}+ \left(\frac{1}{p}-\frac{1}{2}\right) \left( 2\theta_H'+1\right)=1 \quad \mbox{ if } p=1, \qquad \frac{1}{2}+ \left(\frac{1}{p}-\frac{1}{2}\right) \left( 2\theta_H'+1\right)<1 \quad \mbox{ if } p\in(1,2),
	\end{equation*}
	and 
	\begin{equation*}
	\frac{1}{2}+ \theta_H\left(\frac{1}{p}-\frac{1}{2}\right)>5/6>0.
	\end{equation*}
	Then,  due to $\theta_H - 2\theta_H' -1=\theta_H -1<0$ and $\eta>2/(1-\theta_H)$ {if} $p=1$ together with 
	$2^{\Jmax}=O(\Tlambda)$, $\lambda\in [\underline{\lambda},\bar \lambda]$ and $\norm{ \log^{\beta} (\max\{|\cdot|,e\}) \mu }_1\leq B$, for $T$ large enough depending on $\underline{\lambda}, \bar{\lambda}$ and $B$ we can use Lemma \ref{lem:OmegantildesubseteqOmegan} to conclude that, for $T$ large enough, the supremum over the measures $\nu$ considered herein of the quantity in \eqref{eq:remainderintermediateconcentration} is bounded above by
	\begin{equation*}
	3\exp\left(  - C_2^{(R)}T (t(T,l))^2\right) \leq \left\{ \begin{array}{ll}
	3T^{-C_2^{(R)} \left(\log T\right)^{\frac{\eta}{2}\left( 1-\theta_H\right)-1
	}},& \mbox{ if } p=1,\\
	3\exp\left(-C_2^{(R)} T^{\frac{1}{2} - \left(\frac{1}{p}-\frac{1}{2}\right) \left( 2\theta_H' +1\right) }  \right), & \mbox{ if } p\in(1,2).
	\end{array}
	\right.
	\end{equation*}
	This shows the result for $p\in[1,2)$. In view of \eqref{eq:remainderintermediateconcentration} and $j\leq l$, for $p\in[2,\infty]$ the right hand side in the inequality (ignoring the constant quotient and the term before it) therein is 
	\begin{align*}
	T^{-\frac{1}{4}}2^{\frac{l}{4}} \left(1+(l+\log T)\one_{\{\infty\}}(p)\right)^{\frac{1}{4}} 2^{\frac{j}{2}\left(\frac{1}{p}-1\right)} &\geq T^{-\frac{1}{4}}2^{\frac{l}{2}\left(\frac{1}{p}-\frac{1}{2}\right)}\left(1+\log T\one_{\{\infty\}}(p)\right)^{\frac{1}{4}} \\
	&=:t(T,l), 
	\end{align*}
	and, 
	in view of the bound for $2^{\Jmax}$ in \eqref{eq:Jmax}, we have that, for all $ l\in \J$ and $T\geq e$, 
	\begin{equation*}
	T^{\frac{1}{2}\left(\frac{1}{p}-1\right)} \left(1+\left(\log T\right)^{\frac{1}{4}\left( \eta+1\right) }\one_{\{\infty\}}(p)\right)\leq t(T,l)\leq T^{-\frac{1}{4}}\left(1+\log (T)\one_{\{\infty\}}(p)\right)^{\frac{1}{4}},
	\end{equation*}
	where $\eta>1$
	. Then, due to this assumption on $\eta$ together with $2^{\Jmax}=O(\Tlambda)$, $\lambda\in [\underline{\lambda},\bar \lambda]$ and $\norm{ \log^{\beta} (\max\{|\cdot|,e\}) \mu }_1\leq B$, for $T$ large enough depending on $\underline{\lambda}, \bar{\lambda}$ and $B$ we can use Lemma \ref{lem:OmegantildesubseteqOmegan} to conclude that, for $T$ large enough, the supremum over the measures $\nu$ considered herein of the quantity in \eqref{eq:remainderintermediateconcentration} is bounded above by
	\begin{equation*}
	3\exp\left(  - C_2^{(R)}T (t(T,l))^2\right) \leq \left\{ \begin{array}{ll}
	3\exp\left(-C_2^{(R)} T^{\frac{1}{p}} \right), & \mbox{ if } p\in[2,\infty),\\
	3T^{-C_2^{(R)} \left(\log T\right)^{\frac{1}{2}(\eta -1)}}, & \mbox{ if } p=\infty.
	\end{array}
	\right.
	\end{equation*}
	
\end{proof}

\begin{lem}\label{lem:JhatT}
	
	Let $\supnDn <\!\infty$ and 
	assume that $\nu\in \mathcal{V}_{p,s,\alpha, \alpha',\beta}\left(\underline{\lambda}, \bar{\lambda}, (B_i)_{i=1}^4\right)$, with $\mathcal{V}$  as defined in \eqref{eq:mathcalV}, for some $p\in[1,\infty]$, $s>0$, $\alpha>2/p-1$, $\alpha'>3/p-3/2$, $\beta>1$, $0<\underline{\lambda} \leq \bar{\lambda}<\infty$, and $B_1,B_2, B_3,B_4\geq 1$. 
	Let $\UT$ be as in \eqref{eq:Un}, $\Jmax, \J$ and $\JThat$ as introduced in \eqref{eq:Jmax} and \eqref{eq:twopl}
	, and $\JTstar$ as defined in \eqref{eq:defJ*}. Additionally,  let $H=\infty$ if $p\in[2,\infty)$ and, otherwise, let it be as in \eqref{eq:Hn'} satisfying, 
	if $p\in(1,2)$, that  $\theta_H-2 \theta_H' -1\leq 0$ and $\theta_H'<\frac{1-1/p}{2(1/p-1/2)}$. Then, if $M:=\tau - 1/{\sqrt{\underline \lambda}} 
	> \max\left\{1,C_2^{(MS)}\right\}^2$, where $C_2^{(MS)}
	$ is as in \eqref{eq:C2MS}
	, we have that, for every $j\in \J$ such that $j>\JTstar$, and any $T$ large enough,
	\begin{align*}\label{eq:hatJT}
	\sup_{ \nu\in  \mathcal{V}}\Pr \left( \JThat=j \right) &\leq  8e^{-\sqrt{M}2^{j/p} (j+\log T)^{\one_{\{\infty\}}(p)}} +  2\log(T)e^{-C_3^{(MS)}  \Tlambdaunderline}  \notag \\
	& \quad + 16 e^{- C_4^{(MS)} 2^j}  + 6 \log(T)e^{- C^{(C)} \Tlambdaunderline   }  \notag \\
	&\quad +6\log(T)\times \left\{ \begin{array}{ll}
	T^{-C_2^{(R)} \left(\log T\right)^{\frac{\eta}{2}\left( 1-\theta_H\right)-1
	}},& \mbox{if } p=1,\\
	\exp\left(-C_2^{(R)} T^{\frac{1}{2} - \left(\frac{1}{p}-\frac{1}{2}\right) \left( 2\theta_H' +1\right) }  \right), & \mbox{if } p\in(1,2),\\
	\exp\left(-C_2^{(R)} T^{\frac{1}{p}} \right), & \mbox{if } p\in[2,\infty),\\
	T^{-C_2^{(R)} \left(\log T\right)^{\frac{1}{2}(\eta -1)}}, & \mbox{if } p=\infty,
	\end{array}
	\right.
	\end{align*}
	where 
	\begin{equation*}
	C^{(C)}:=L\frac{e^{-3\lambdabarsupnDn}}{8(\lambdabarsupnDn)^2}\min\left\{ \frac{ e^{-2\lambdabarsupnDn}}{2}, {1-e^{-\lambdabarsupnDn}}\right\}^2
	\end{equation*}
	and $C_3^{(MS)}=C_3^{(MS)}(M), C_4^{(MS)}=C_4^{(MS)}(M)$  and ${L},C_2^{(R)}=C_2^{(R)}(M), \eta$ are as in Proposition \ref{prop:mainstochasticconcentration} and Lemma \ref{lem:remainderconcentration}, respectively.

\end{lem}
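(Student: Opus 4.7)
The plan is to adapt the classical Lepski\u{i} method to our non-linear, compound Poisson setting by exploiting the concentration inequalities of Proposition \ref{prop:mainstochasticconcentration} and Lemma \ref{lem:remainderconcentration}. First I would note that, for $j > \JTstar$, the event $\{\JThat = j\}$ forces $j - 1$ to fail the defining condition of $\JThat$ (when $j > 1$), so there exists $l \in \J$ with $l \geq j$ such that $\norm{\nuThat(j-1) - \nuThat(l)}_p > \tau \sqrt{\twop{l}/T}$; a union bound gives
$$\Pr(\JThat = j) \leq \sum_{\substack{l \in \J \\ l \geq j}} \Pr\!\left(\norm{\nuThat(j-1) - \nuThat(l)}_p > \tau \sqrt{\twop{l}/T}\right).$$

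For each summand I would restrict to $\Omegantilde(2^l, r)$ (with $r$ chosen analogously to \eqref{eq:rn} at the $l$-scale), pay the exceptional probability $\Pr(\Omegantildecomplement(2^l, r))$ via Lemma \ref{lem:OmegantildesubseteqOmegan:b} (whose optimisation in $r$ produces the $e^{-C^{(C)} \Tlambdaunderline}$ term), and on $\Omegantilde$ use \eqref{eq:allpreconcentration} together with the triangle inequality to obtain
$$\norm{\nuThat(j-1) - \nuThat(l)}_p \leq \sum_{m \in \{j-1, l\}}\!\!\left(\norm{(\nuThat(m)-\nu)^{(MS)}}_p + \norm{(\nuThat(m)-\nu)^{(R)}}_p + \norm{(\nuThat(m)-\nu)^{(B)}}_p\right)\!.$$
Since both $j-1 \geq \JTstar$ and $l > \JTstar$, the definition \eqref{eq:defJ*} combined with Lemma \ref{lem:bias} (the tail correction $H^{-\alpha}$ for $p \in [1,2)$ being negligible and absorbed into the $o(\varepsilon_{\Tlambdaunderline})$ error) gives $\norm{(\nuThat(m)-\nu)^{(B)}}_p \leq \sqrt{\twop{m}/\Tlambda}$; using $\twop{j-1} \leq \twop{l}$ and $\lambda \geq \underline\lambda$, the two bias contributions together are at most $(1/\sqrt{\underline\lambda})\sqrt{\twop{l}/T}$ up to an absolute factor absorbed into $\tau$, so that on the event of interest the four stochastic and remainder terms at levels $j-1$ and $l$ must jointly exceed $M \sqrt{\twop{l}/T}$, with $M = \tau - 1/\sqrt{\underline\lambda}$.

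A further union bound splitting $M\sqrt{\twop{l}/T}$ among these four contributions lets me invoke Proposition \ref{prop:mainstochasticconcentration} for the MS terms (with the scaled threshold still exceeding $\max\{1, (C_2^{(MS)})^2\}$, by hypothesis) and Lemma \ref{lem:remainderconcentration} for the remainder terms. The MS bound at estimator level $l$ and threshold $l$ decays as $e^{-\sqrt{M}2^{l/p}(l+\log T)^{\one_{\{\infty\}}(p)}} + e^{-C_3^{(MS)}\Tlambdaunderline} + 2e^{-C_4^{(MS)}2^l}$; summed over $l \geq j$ in $\J$, the $2^{l/p}$ and $2^l$ exponentials form geometric series dominated by their $l=j$ values (yielding the leading $8e^{-\sqrt{M}2^{j/p}(\cdots)}$ and $16e^{-C_4^{(MS)}2^j}$ terms), while the $l$-independent Bernstein-type term picks up the $2\log(T)$ factor from $|\J|\leq \Jmax$. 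The remainder contributions from Lemma \ref{lem:remainderconcentration} and the $\Omegantildecomplement$ probabilities similarly acquire the $6\log(T)$ factor upon summation, reproducing the right-hand side of the claimed bound.

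The main obstacle lies in the delicate bookkeeping of how the MS concentration at the \emph{estimator} level $j-1$ interacts with the summation over the threshold level $l$: since the bound of Proposition \ref{prop:mainstochasticconcentration} is uniform in the threshold, the contribution at estimator level $j-1$ must be absorbed into the $l=j$ summand of the level-$l$ contribution, which requires that $\tau$ dominate $1/\sqrt{\underline\lambda}$ and $(C_2^{(MS)})^2$ with enough slack that the splitting-induced constants remain admissible; this is precisely the content of the hypothesis $M > \max\{1, (C_2^{(MS)})^2\}$, and it also explains why the same $M$ (rather than a fraction of it) appears in the exponents on the right-hand side.
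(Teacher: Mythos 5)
Your proposal follows the same skeleton as the paper's proof: the union bound over $l\ge j$ coming from the definition of $\JThat$ (with $j-1\ge\JTstar$), the split into main stochastic, remainder and bias terms on the events $\Omegantilde$, the bias controlled through the definition of $\JTstar$, the stochastic pieces through Proposition \ref{prop:mainstochasticconcentration} and Lemma \ref{lem:remainderconcentration}, the exceptional event through Lemma \ref{lem:OmegantildesubseteqOmegan:b} with a constant-order $r$, and a geometric/$\log T$ summation at the end. Working on $\Omegantilde\left(2^l,r\right)$ per summand rather than on a single $\Omegantilde\left(2^j,r\right)$ is a benign deviation, since these sets are nested.

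There is, however, a genuine gap in how you dispatch the threshold. You propose ``a further union bound splitting $M\sqrt{\twop{l}/T}$ among these four contributions'' and then assert that the scaled thresholds still exceed $\max\{1,(C_2^{(MS)})^2\}$ ``by hypothesis'', and that this explains why the unfractioned $M$ appears in the exponents. Neither assertion holds: if each of the four terms receives a quarter of the budget, the hypothesis $M>\max\{1,(C_2^{(MS)})^2\}$ does not make Proposition \ref{prop:mainstochasticconcentration} applicable at level $M/4$, and the exponents produced would carry $\sqrt{M}/2$, $C_3^{(MS)}(M/4)$, $C_4^{(MS)}(M/4)$, $C_2^{(R)}(M/4)$ rather than the constants stated in the lemma, so the stated bound would not follow without strengthening the condition on $\tau$. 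The paper does not split: it first absorbs the bias at levels $j-1$ and $l$ (including the $H_{T,l}^{-\alpha'}$ tail term for $p\in[1,2)$, which is shown to be at most $\epsilon\sqrt{\twop{l}/T}$ for $T$ large using $\alpha'\theta_H>1$ and $\norm{\nu}_{p,\alpha'}\le\bar\lambda B_3$ --- not an ``$o(\varepsilon_{\Tlambdaunderline})$ error'', which has no meaning inside a probability bound), and then bounds each of the four probabilities at the full threshold $M'=\tau-1/\sqrt{\underline\lambda}-\epsilon\one_{[1,2)}(p)$, which is precisely what yields $\sqrt{M}$, $C_3^{(MS)}(M)$, $C_4^{(MS)}(M)$ and $C_2^{(R)}(M)$ in the display. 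Relatedly, the ``main obstacle'' you flag --- the level-$(j-1)$ probabilities being uniform in $l$ --- is not resolved by ``absorbing them into the $l=j$ summand''; in the paper the $l$-dependent exponentials are summed via $\sum_{l\ge j}e^{-C2^{l/p}}\le 4e^{-C2^{j/p}}$ and $\sum_{l\ge j}e^{-Cl}\le 2e^{-Cj}$ (using $j>\JTstar$ and $T$ large), while only the $l$-independent Bernstein-type, remainder and complement terms collect the $\log T$ factors; as written, your accounting does not produce the displayed constants.
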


\begin{proof}
	
	Note that, in view of 
	Lemma \ref{lem:bias}, it holds that, for any $T>0$ 
	and $l\in\J$ satisfying $l\geq \JTstar$, 
	\begin{align*}
	\norm{(\nuThat(l)-\nu)^{(B)} }_p &\leq C^{(B)}  \norm{\nu}_{B_{p,\infty}^s} 2^{-ls} +   \norm{ \nu}_{p,\alpha'} H_{T,l}^{-\alpha'} \one_{[1,2)}(p)\\
	&\leq C^{(B)}  \norm{\nu}_{B_{p,\infty}^s} 2^{-\JTstar \!s} +   \norm{ \nu}_{p,\alpha'} \left(\!\sqrt{\frac{2^l}{T}}\:\right)^{\alpha'\theta_H} 2^{-\alpha' l \theta_H'} \one_{[1,2)}(p)\\
	&\leq  \left(\frac{1}{\sqrt{ \lambda}}+  (\log T)^{-\frac{\eta}{2}(\alpha'\theta_H -1)} \norm{ \nu}_{p,\alpha'} \one_{[1,2)}(p)\right)\sqrt{\frac{\twop{l}}{T}},
	\end{align*}
	where in the last inequality we used the definition of $\JTstar$ in \eqref{eq:defJ*}, $ \JTstar\leq l$, and the facts $\alpha'\theta_H>1$,  $\sqrt{2^{\Jmax}/T}\leq (\log T)^{-\eta/2}$ and $\theta_H'\geq 0$, all of which immediately follow from the bounds for these parameters in \eqref{eq:thetaHthetaH'} and \eqref{eq:Jmax}. 
	Then, by the same arguments leading to \eqref{eq:splitnunhat-nu} 
	together with $\norm{ \nu}_{p,\alpha'} \leq \bar \lambda B_3<\infty$, we conclude that, uniformly in such $\nu$, for any $\epsilon>0$, for $T$ large enough and on $\Omegantilde\left(2^j,r\right)$ with $0\leq r<\min\{1, e^{-2\lambdaDn}/(4\lambdaDn)\}$ and  $j,l\in\J$ such that $l\geq \JTstar$, 
	\begin{align*}
	\norm{ \nuThat(l) -\nu}_p \leq & \norm{(\nuThat(l)-\nu)^{(MS)} }_p + \norm{(\nuThat(l)-\nu)^{(R)} }_p
	+ 
	\left(\frac{1}{\sqrt{\lambda}}+  \epsilon \one_{[1,2)}(p)\right)
	\sqrt{\frac{\twop{l}}{T}}.
	\end{align*}
	Furthermore, by the definition of $\JThat$ in \eqref{eq:JThat}, if $\JThat=j$  there is an $l\in \J$ with $l\geq j$ such that 
	\begin{equation*}
	\norm{ \nuThat(j-1) -\nuThat(l)}_p >\tau\sqrt{\frac{\twop{l}}{T}},
	\end{equation*}
	where we remark that $j-1\geq \JTstar$. Then, by a union bound, the triangle inequality, the second-to-last display, the fact that $j-1\leq l$ and defining  $M':=
	\tau - 1/{\sqrt{ \underline{\lambda}}} - \epsilon \one_{[1,2)}(p) 
	$
	, for $T$ large enough it follows that $\Pr \left( \JThat=j \right)$ is bounded above by
	\begin{align*}
	\sum_{l\in\J:l\geq j} & \PrOmegantilde \left( \norm{ \nuThat(j-1) -\nuThat(l)}_p > \tau\sqrt{\frac{\twop{l}}{T}} \right) + \Pr\left( \Omegantildecomplement\left(2^j,r\right)\right)\\
	&\quad\leq  \sum_{l\in\J:l\geq j} \left[ \PrOmegantilde \left( \norm{ (\nuThat(j-1)-\nu)^{(MS)} }_p > M'\sqrt{\frac{\twop{l}}{T}} \right) \right. \\
	&\quad\quad\qquad\qquad+ \PrOmegantilde\left( \norm{ (\nuThat(j-1)-\nu)^{(R)} }_p > M'\sqrt{\frac{\twop{l}}{T}} \right)  \\
	&\quad\quad\quad\qquad\qquad+ \PrOmegantilde \left( \norm{ (\nuThat(l)-\nu)^{(MS)} }_p > M'\sqrt{\frac{\twop{l}}{T}} \right)  \\
	&\quad\quad\quad\quad\qquad\qquad+\left. \PrOmegantilde\left( \norm{ (\nuThat(l)-\nu)^{(R)} }_p > M'\sqrt{\frac{\twop{l}}{T}} \right)\right] \\
	& \qquad + \Pr\left( \Omegantildecomplement\left(2^j,r\right)\right).
	\end{align*} 
	In addition, note that, for any constant $C>0$,  $p\in[1,\infty)$ and $T$ large enough,
	\begin{equation*}
	\sum_{l\in\J:l\geq j } e^{-C2^{l/p}} = \frac{e^{-C2^{j/p}}+e^{-C2^{(j+1)/p}} - e^{-C2^{(j+1)/p}(\Jmax - j +2) } }{1-e^{-C2^{(j+1)/p}}} \leq 4 e^{-C2^{j/p}},
	\end{equation*}
	where in the inequality we have used that $j>\JTstar$ so that, for any $C>0$ and $T$ large enough, $e^{-C2^{(j+1)/p}}\leq 1/2$. By the same arguments, we also have that, for any $C>0$ and $T$ large enough, $\sum_{l\in\J:l\geq j } e^{-Cl} \leq 2e^{-Cj}$. Recall from the proof of Theorem \ref{thm:upperbound} that, when $p\in [1,2)$, if $\mu$ satisfies Assumption \ref{ass2a} for some $\alpha>2/p-1$ and $q=1$, then Assumption \ref{ass2} is automatically satisfied. Moreover, $2^j\leq 2^{\Jmax}\leq T=O(\Tlambda)$ and, in view of $M:=\tau - 1/{\sqrt{\underline \lambda}}> \max\left\{1,C_2^{(MS)}\right\}^2$, $M'=\max\left\{1,C_2^{(MS)}\right\}^2 +\delta	- \epsilon \one_{[1,2)}(p)$ for some $\delta>0$ so, taking $\epsilon\leq \delta$,  $M'\geq \max\left\{1,C_2^{(MS)}\right\}^2$. 
	Consequently, the conclusion  follows by Proposition \ref{prop:mainstochasticconcentration} and Lemmata \ref{lem:remainderconcentration} and \ref{lem:OmegantildesubseteqOmegan:b} by choosing $r$ equal to, say, half of the quantity on the right hand side in \eqref{eq:rbounds}, and bounding this from below as done to arrive at \eqref{eq:rlowerupperbound} using $\lambda \in [\underline{\lambda},\bar \lambda]$.

\end{proof}


We need a definition prior to proving Theorem \ref{thm:adaptation}: for $C^{(R)}$ and $C^{(B)}$  as in Theorem \ref{thm:upperbound}, let
\begin{align}\label{eq:Co}
\!\!C^{(o)}\!:= 
& \left( \left(\frac{5}{4}e^{2\lambdabarsupnDn} +1\right)\one_{[1,\infty)} + 3 \underline{\lambda}^{-1-\vartheta_U}\right) \left( \underline{\lambda}^{-\frac{\theta_H+2\theta_H'}{2p}}\one_{[1,2)}(p)+\bar{\lambda}^{\frac{1}{p}}\one_{[2,\infty]}(p) \right) \\
& + 8\bar{\lambda}\left( \frac{5}{4}e^{2\lambdabarsupnDn} +1\right)2 \bar{\lambda}^{\sqrt{\tau-1/\sqrt{\underline{\lambda}}}} \!\left( C^{(B)} \bar{\lambda} B_1 \right)^{2\frac{\sqrt{\tau-1/\sqrt{\underline{\lambda}}} \log_2 e-1}{2s+1}} + C B_1 \notag \\
& +  \! \bar\lambda	\left(e^{3\lambdabarsupnDn}\!+\!1\right) C B_1\! +\!\bar\lambda^{ 1+\alpha' \frac{\theta_H}{2}+\frac{\alpha' \theta_H}{2s+1}} \underline{\lambda}^{-\frac{2 \alpha' \theta_H'}{2s+1}}\!\left(  C^{(B)} \right)^{\!\frac{\alpha' (\theta_H-2\theta_H')}{2s+1}}\!\!  B_1^{\frac{\alpha' \theta_H}{2s+1}}  B_3   \one_{[1,2)}(p)   \notag  \\
&+ C^{(R)}\bar \lambda^2 \supnDn e^{4\lambdabarsupnDn} \left( \!\underline\lambda^{-\theta_H\big(\frac{1}{p}-\frac{1}{2}\big)}\left( \bar\lambda C^{(B)}B_1 \right)^{\frac{1+(2/p-1)(2\theta_H'-\theta_H +1 )}{2s+1}}  \one_{[1,2)}(p) \right. \notag\\
& \qquad\qquad\qquad\quad \qquad\qquad\qquad \qquad \quad \,\,\quad \left.+\left( \bar\lambda C^{(B)}B_1 \right)^{\frac{2(1-1/p)}{2s+1}} \one_{[2,\infty]}(p) \right)\notag\\
& + \!  \underline\lambda^{-\vartheta_U} \!\left(\underline\lambda^{-\frac{\theta_H}{2p}} \!\left( \bar\lambda C^{(B)}\right)^{\!\big(2+\frac{\theta_H-2\theta_H'}{p}\big)\frac{1}{2s+1}}\!B_1^{\frac{\theta_H}{p(2s+1)}}\one_{[1,2)}(p) \right.\notag\\
& \qquad\qquad\qquad\quad  \qquad \quad \qquad \,\, \left.\left( \bar\lambda C^{(B)} B_1\right)^{\frac{2(1-1/p)}{2s+1}}\one_{[2,\infty]}(p) \right) \!.\notag
\end{align}

\begin{proof}[Proof of Theorem \ref{thm:adaptation}]

	Trivially, 
	\begin{equation}\label{eq:adaptationpre}
	\E\left[\norm{\nuThat(\JThat) - \nu}_p\right]=\E\left[\norm{\nuThat(\JThat) - \nu}_p \one_{ \JThat> \JTstar}\right] + \E\left[\norm{\nuThat(\JThat) - \nu}_p \one_{ \JThat\leq \JTstar}\right],
	\end{equation}
	and we analyse the two terms in order. In view of \eqref{eq:estimatornu} and the triangle inequality, the first is bounded, for any $r\geq0$, by
	\begin{align*}
	&\E\left[\norm{\nuThat(\JThat)}_p \one_{ \Omegantilde\left(2^{\JThat},r\right) \cap \{\JThat> \JTstar\} }\right] +\E\left[\norm{\nuThat(\JThat)}_p \one_{ \Omegan\cap \Omegantildecomplement\left(2^{\JThat},r\right) \cap \{\JThat> \JTstar\}}\right] \\
	& \qquad \qquad \qquad \qquad \qquad \qquad + \norm{\nu}_p \Pr\left( \JThat> \JTstar\right).
	\end{align*}
	Note that proving Lemma \ref{lem:Omegantildecomplement} reduced to finding an upper bound for the $\Lp$-norm on the right hand side of \eqref{eq:Omegantildecomplementboundpre}. Crucially, the bound is valid on the whole of $\Omegan$ and, thus, by the same arguments 
	it follows that for any $j \in \Nz$ and on $\Omegan$
	\begin{align}\label{eq:boundnuThatjLpnorm}
	\norm{\nuThat(j)}_p\leq  2^{j} \UT H_{T, j}^{1/p}\one_{[1,2)}(p) +2^{j(1-{1}/{p})} \UT\one_{[2,\infty]}(p).
	\end{align}
	Then, in view of the expressions for $\UT$ and $H_{T, \JThat}$ arising from \eqref{eq:Un} and \eqref{eq:Hn'}, respectively, 
	noting that $2^{\JThat}\leq \twoJmax \leq  T=O(\Tlambda)$ by \eqref{eq:Jmax} and using Lemma \ref{lem:OmegantildesubseteqOmegan:b}, if $\sqrt{\log^{1+2\delta}(e+T) / \Tlambda } \leq r\leq 1$ 
	for some $\delta>0$ the second term in the penultimate display is bounded above by
	\begin{align*}
	3 \frac{\Tlambda^{\vartheta_U}}{\lambda^{\vartheta_U}} \left( \frac{\Tlambda^{1+(\theta_H+2\theta_H')/(2p)}}{\lambda^{1+(\theta_H+2\theta_H')/(2p)}}\one_{[1,2)}(p)\! +\!\frac{\Tlambda^{1-{1}/{p}}}{\lambda^{1-1/p}} \one_{[2,\infty]}(p) \right)e^{- \widetilde{L} \Tlambda r^2  }. 
	\end{align*}
	We may take, e.g., $r=\Tlambda^{-1/4}$ and, noting that  $\varepsilon_{\Tlambda} \leq \varepsilon_{\Tlambdaunderline}$, it is clear that this display gives rise to part of the first line in $ C^{(o)}$ defined in \eqref{eq:Co}. Let us now bound the third term in the display right after \eqref{eq:adaptationpre}. We define $M:=\tau - 1/{\sqrt{\underline{\lambda}}} 
	$ and note that
	, in view of the present assumption on $\tau$, $M\geq \max\left\{1,C_2^{(MS)}\right\}^2$. Then, by Lemma \ref{lem:JhatT} and using the same arguments as at the end of its proof, 
	\begin{align*}
	\sup_{ \nu\in  \mathcal{V}} \Pr \left( \JThat>\JTstar\right) &\leq  32e^{-\sqrt{M}2^{\JTstar/p} \left(\log T\right)^{\one_{\{\infty\}}(p)}} +  2\log^2(T)e^{-C_3^{(MS)}  \Tlambdaunderline}  \notag \\
	& \quad + 64 e^{- C_4^{(MS)} \twoJTstar}  + 6 \log^2(T)e^{- C^{(C)} \Tlambdaunderline   }  \notag \\
	& \quad + 6\log^2(T)\times \left\{ \begin{array}{ll}
	T^{-C_2^{(R)} \left(\log T\right)^{\frac{\eta}{2}\left( 1-\theta_H\right)-1
	}},& \mbox{if } p=1,\\
	\exp\left(-C_2^{(R)} T^{\frac{1}{2} - \left(\frac{1}{p}-\frac{1}{2}\right) \left( 2\theta_H' +1\right) }  \right), & \mbox{if } p\in(1,2),\\
	\exp\left(-C_2^{(R)} T^{\frac{1}{p}} \right), & \mbox{if } p\in[2,\infty),\\
	T^{-C_2^{(R)} \left(\log T\right)^{\frac{1}{2}(\eta -1)}}, & \mbox{if } p=\infty,
	\end{array}
	\right.
	\end{align*}
	and, in view of \eqref{eq:J*sim}, $M>1> 1/2$ and $\varepsilon_{\Tlambda} \leq \varepsilon_{\Tlambdaunderline}$, the last term in the display  after \eqref{eq:adaptationpre} immediately gives rise to the last summand in the second line of $ C^{(o)}$. To bound the first term in the display  after \eqref{eq:adaptationpre} we first note that, in view of the display right before \eqref{eq:presplitnunhat-nu} 
	and $ r=\Tlambda^{-1/4}< \min\{1,e^{-2\lambdaDn}/(4\lambdaDn)\}$, on $\Omegantilde\big(2^{j},r\big)$  the bound in \eqref{eq:boundnuThatjLpnorm} is valid with $\UT$ replaced by $\lambda\big( \frac{5}{4}e^{2\lambdasupnDn} +1\big)$ too. Then, if  $p\in[1,\infty)$, the first term in the display right after \eqref{eq:adaptationpre} is bounded above by 
	\begin{align*}
	& \lambda\left( \frac{5}{4}e^{2\lambdasupnDn} \!+\!1\right) \!\left( \frac{\Tlambda^{1+(\theta_H+2\theta_H')/(2p)}}{\lambda^{1+(\theta_H+2\theta_H')/(2p)}}\one_{[1,2)}(p)\! +\!\frac{\Tlambda^{1-{1}/{p}}}{\lambda^{1-1/p}} \one_{[2,\infty]}(p) \right)\Pr\!\left( \JThat\!>\! \JTstar\right),
	\end{align*}
	and, due to the second-to-last display, it gives rise to the remaining part of the first line in $ C^{(o)}$. When $p=\infty$, the first term in the display  after \eqref{eq:adaptationpre} can be written as
	\begin{equation*}
	\sum_{ \substack{ j\in \mathcal{J}: \\j>\JTstar}}\E\left[\norm{\nuThat(j)}_p \one_{ \Omegantilde\left(2^{j},r\right) \cap \{\JThat> j\} }\right] \leq 8\lambda\left( \frac{5}{4}e^{2\lambdasupnDn} +1\right) \sum_{ \substack{ j\in \mathcal{J}: \\j>\JTstar}}  2^j	e^{-\sqrt{M} (j+\log T)}+o(\varepsilon_{\Tlambdaunderline}) 
	\end{equation*}
	where the  inequality is valid when $T$ is large enough and follows from \eqref{eq:boundnuThatjLpnorm} with $\UT$ replaced by $\lambda\big( \frac{5}{4}e^{2\lambdasupnDn} +1\big)$ together with Lemma \ref{lem:JhatT}. The sum on the right hand side equals
	\begin{align*}
	&e^{-\sqrt{M} \log T} \sum_{j>\JTstar}  2^{-j(\sqrt{M} \log_2 e -1)}	\leq 2 T^{-\sqrt{M}} 2^{-\JTstar(\sqrt{M} \log_2 e-1)}\\
	&\lesssim 2 \bar{\lambda}^{\sqrt{M}} \!\left( C^{(B)} \bar{\lambda} B_1 \right)^{2\frac{\sqrt{M} \log_2 e-1}{2s+1}} \!\!\Tlambda^{ \frac{1}{2s+1}-\sqrt{M}(1+\frac{\log_2 e}{2s+1})} (\log\! \Tlambda )^{-\sqrt{M}\frac{\log_2 e-1}{2s+1}},
	\end{align*}
	with the first inequality following from $M>1> 1/\log_2 e$ together with the same arguments as at the end of Lemma \ref{lem:JhatT}, and the second inequality being justified by \eqref{eq:J*sim}.  Due to $M>1$ and  $\varepsilon_{\Tlambda} \leq \varepsilon_{\Tlambdaunderline}$, the last display is $o(\varepsilon_{\Tlambdaunderline})$ and the first term in the display  after \eqref{eq:adaptationpre} with $p=\infty$ gives rise to the first summand in the second line in $ C^{(o)}$.
	We are left with bounding the supremum of the second term in \eqref{eq:adaptationpre}. Note that, due to $\lambda\in[\underline{\lambda},\bar{\lambda}]$ and $\norm{ \log^{\beta} (\max\{|\cdot|,e\}) \mu }_1\leq B_4$ for some $\beta>1$ and $B_4\geq 1$, for $T$ large enough depending on $\underline{\lambda},\bar{\lambda}$ and $B_4$, the bounds in \eqref{eq:rbounds}  (the lower bound up to constants) allow to take an intermediate $r$ for all $\lambda\in[\underline{\lambda},\bar{\lambda}]$. Thus, by the triangle inequality, the definition of $\JThat$ in \eqref{eq:JThat},  Propositions \ref{prop:mainstochastic}, \ref{prop:remainder} and Lemmata \ref{lem:bias}, \ref{lem:Omegantildecomplement}, for $T$ large enough the supremum of the second term in \eqref{eq:adaptationpre} is bounded above by
	\begin{align*}
	&\E\left[\norm{\nuThat(\JThat) - \nuThat(\JTstar)}_p \one_{ \JThat\leq \JTstar}\right]+E\left[\norm{\nuThat(\JTstar) - \nu}_p \right] \\
	& \,\,\,\leq \left(\tau\sqrt{\lambda} +2 \lambda e^{2\lambdaDn} L_p M_p\right) \sqrt{\frac{\twop{\JTstar}
		}{\Tlambdaunderline}}  \!+\! C^{(B)}\norm{\nu}_{B_{p,\infty}^s} 2^{-s\JTstar }\\
	& \,\,\, \quad + \norm{\nu}_p \frac{e^{3\lambdaDn}}{\Tlambda}+  C^{(R)}\lambda^2 \Dn e^{4\lambdaDn} r^2 \left(  H_{T,\JTstar}^{\left(\frac{2}{p}-1\right)}2^{\JTstar\!/p} \one_{[1,2)}(p) \!+\! 2^{\JTstar\left(1-\frac{1}{p}\right)} \one_{[2,\infty]}(p) \right) \\
	&\,\,\, \quad + \norm{\nu}_{p,\alpha'} H_{T,\JTstar}^{-\alpha'}\!\! \one_{[1,2)}(p)\\
	&\,\,\, \quad +\left(\!U_T \!\left(\twoJTstar H_{T,\JTstar}^{1/p}\!\! \one_{[1,2)}(p) \!+\!2^{\JTstar\left(1-\frac{1}{p}\right)} \one_{[2,\infty]}(p)\right) \!+\!\norm{\nu}_p\!\right)  \! e^{- \widetilde L \Tlambda r^2  } 	\\
	& \,\,\,\leq \!\left(\tau\sqrt{\lambda} +2 {\lambda }  e^{2\lambdaDn} L_p M_p+1  \right)\left( \lambda C^{(B)}\norm{\mu}_{B_{p,\infty}^s} \right)^{\frac{1}{2s+1}} \sqrt{\frac{ \twop{J_{\Tlambda}}
		}{\Tlambda}} \\
	& \,\,\, \quad +\! C^{(R)}\lambda^2 \Dn e^{4\lambdaDn} r^2  \left( \!\lambda^{-\theta_H(\frac{1}{p}\!-\!\frac{1}{2})}\left( \lambda C^{(B)}\norm{\mu}_{B_{p,\infty}^s} \right)^{\!\frac{1+(2/p-1)(2\theta_H'-\theta_H +1 )}{2s+1}}  \!\!H_{\Tlambda}^{ \left(\frac{2}{p}-1\right)}2^{J_{\Tlambda}/p} 
	\right. \\
	& \qquad\qquad\qquad\qquad\qquad\qquad \times \one_{[1,2)}(p) +\left.\left( \lambda C^{(B)}\norm{\mu}_{B_{p,\infty}^s} \right)^{\frac{2(1-1/p)}{2s+1}} 2^{J_{\Tlambda}\left(1-\frac{1}{p}\right)} \one_{[2,\infty]}(p) \right) \\
	&\,\,\, \quad +  \lambda\norm{\mu}_p \frac{e^{3\lambdaDn}}{\Tlambda}+\lambda^{1+ \alpha' \frac{\theta_H}{2}} \norm{\mu}_{p,\alpha'}  \left( \lambda C^{(B)}\norm{\mu}_{B_{p,\infty}^s} \right)^{\frac{\alpha' (\theta_H-2\theta_H')}{2s+1}} H_{\Tlambda}^{-\alpha'} \one_{[1,2)}(p)\\
	& \,\,\, \quad +  \left(  \lambda^{-\vartheta_U}U_{\Tlambda} \left(  \lambda^{-\frac{\theta_H}{2p}} \left( \lambda C^{(B)}\norm{\mu}_{B_{p,\infty}^s} \right)^{\big(2+\frac{\theta_H-2\theta_H'}{p}\big)\frac{1}{2s+1}} \twoJn H_{\Tlambda}^{\frac{1}{p}} \one_{[1,2)}(p)\right. \right.\\
	&\quad\qquad\qquad\qquad\quad\,\,\,+\!\!\left. \left.\left( \lambda C^{(B)}\norm{\mu}_{B_{p,\infty}^s} \right)^{\frac{2(1-1/p)}{2s+1}} 2^{J_{\Tlambda}\left(1-\frac{1}{p}\right)} \one_{[2,\infty]}(p) \right) \!\!+\!\lambda \norm{\mu}_p\right)   e^{- \widetilde L 
		\Tlambda r^2  } \!,
	\end{align*}
	where in the last equality we used the definition of $\JTstar$ in \eqref{eq:defJ*} together with \eqref{eq:J*sim}, and $H_{\Tlambda}, J_{\Tlambda}, U_{\Tlambda}$ are as in Theorem \ref{thm:upperbound}.
	Then, in view of $1\leq \norm{\mu}_{B_{p,\infty}^s}\leq B_1$, the bounds for $\theta_H,\theta_H'$ and the fact that $M_p'\leq \bar M_p$
	, together with $\varepsilon_{\Tlambda} \leq \varepsilon_{\Tlambdaunderline}$, we can argue as at the end of the proof of Theorem \ref{thm:upperbound} to conclude that $r$ can be chosen so that the last display is bounded above by \eqref{eq:adaptation} except for the first two lines in $C^{(o)}$.
	
\end{proof}

\subsection{Proof of Theorem \ref{thm:varphivarphin} 
}\label{sec:proofs:varphivarphin}

\begin{proof}

	%
	
	\begin{enumerate}

		\item Due to the form of $\Pn$ as an infinite series given in \eqref{eq:repPintro}, we can split a sample $\Zs$ from it into the increments coming from the measure $e^{-\lambdaDn} \delta_{0}$ (no jumps) and those coming from $e^{-\lambdaDn}\sump{1}{}{n}$ (jumps). Recall that $n_0 $ denotes the total number of observations falling into the first type and $\ZJ{1},\ldots \ZJ{n-n_0}$ are the rest of the increments. Then, after some algebra, 
		\begin{align*}
		\varphinhat(\xi)- \varphin(\xi) = & \left(\frac{n_0}{n} - e^{-\lambdaDn}\right) \left( 1- \E  e^{\twopii \xi \ZJ{1}}   \right)  \\
		&+ \frac{n-n_0}{n} \left(\frac{1}{n-n_0} \sum_{i =1}^{n-n_0} \left(  e^{\twopii \xi \ZJ{i}}   - \E  e^{\twopii \xi \ZJ{1}}  \right)   \right)
		\end{align*}
		and, 
		by the triangle inequality, 
		\begin{align}\label{eq:maxineqvarphi}
		\E \sup_{|\xi|\leq \Xi'} n|\varphinhat(\xi) - \varphin(\xi)|  \leq &
		\E \left| n_0 - n e^{-\lambdaDn}\right| \left|1-\E  e^{\twopii \xi \ZJ{1}}\right| \notag \\
		&+ \E \left[m \,E \left[ \sup_{|\xi|\leq \Xi'} \left|\varphinhatJ{m} (\xi) - \varphinJ(\xi) \right| \, \bigg|_{n_0=n-m}  \right]\right],
		\end{align}
		where $\varphinJ$ and $\varphinhatJ{m}$  are the characteristic function of an observation of the second type and its empirical version, respectively. 
		The first term on the right side 
		is bounded above by two times the right side in \eqref{eq:maxineqvarphiparametric}.
		Notice that the second type of observations only depend on $n_0$ through the total number of them $n-n_0$. Hence, for any fixed $m,n\in \N$, the inner expectation in the second term of \eqref{eq:maxineqvarphi} satisfies that for any $\delta>0$
		\begin{align*}
		\E  \sup_{|\xi|\leq \Xi'} \left|\varphinhatJ{m} (\xi) - \varphinJ(\xi) \right|  \leq &\frac{\log(e+\Xi')^{1/2+\delta}}{\sqrt{m}}\\
		&\times \sup_{\widetilde{m} \in \N} \E \left[ \sup_{\xi \in \R}  \sqrt{\widetilde m}  \left|\varphinhatJ{\widetilde m} (\xi) - \varphinJ(\xi) \right| \log(e+|\xi|)^{-1/2-\delta}\right] .
		\end{align*}
		Then, the proof of Theorem 1 in \cite{KR10} can be refined in the exact same way that Theorem 4.1 in \cite{NR09} was refined by Theorem 3.1 in \cite{C18} to give that, if $\beta>1$, the supremum of the expectation in the last display is bounded above by
		\begin{equation*}
		C_1' + C_2' \left( \E \log^{\beta}\left(\max\left\{|\ZJ{1}|,e\right\}\right)\right)^{\frac{1}{2\beta}},
		\end{equation*}
		where $C_1'$ and $C_2'$ are universal constants (so, in particular, independent of $\Dn$, $\lambda$ and $\mu$).  In view of Proposition 25.4 (iii) in \cite{S99}, the function  $\log(\max\{|\cdot|,e\}): \R \to [1,\infty)$ is submultiplicative, i.e. there exists a constant $b>0$ such that for all $x,y \in \R$
		\begin{equation*}
		\log(\max\{|x+y|,e\}) \leq b \log(\max\{|x|,e\}) \log(\max\{|y|,e\});
		\end{equation*}
		thus, so is $\log^\beta(\max\{|\cdot|,e\}): \R \to [1,\infty)$ with constant $b^\beta$.  Then, using Fubini's theorem and applying the submultiplicative property repeatedly,
		\begin{align*}
		\E \log^{\beta}\left(\max\left\{|\ZJ{1}|,e\right\}\right) & = \left(e^{\lambdaDn}-1\right)^{-1} \sum_{i =1}^\infty \frac{\Dnsuper{i}}{i!} \int_{\R} \log^\beta(\max\{|x|,e\}) \nu^{\ast i}(dx) \\
		& \leq 	\frac{ \sum_{i =1}^\infty \frac{(b^\beta \Dn)^i}{i!} \norm{\left( \log^\beta(\max\{|\cdot|,e\}) \nu \right)^{\ast i}}_{L^1(\R)} }{b^\beta \left(e^{\lambdaDn}-1\right) }\\
		& \leq \frac{  \left(e^{a b^\beta  \lambdaDn}-1\right)}{b^\beta  \left(e^{\lambdaDn}-1\right)},
		\end{align*}
		where the last inequality is justified by repeated use of Young's inequality for convolutions. Gathering all these conclusions we have that the second term on the right hand side of \eqref{eq:maxineqvarphi} is bounded above by
		\begin{align*}		
		&\left(C_1'+ C_2' \left( \frac{  \left(e^{a b^\beta \lambdaDn}-1\right)}{b^\beta \left(e^{\lambdaDn}-1\right)} \right)^{\frac{1}{2\beta}}\right) {\log^{1/2+\delta} (e+\Xi')}  \E  \sqrt{n-n_0}. 
		\end{align*}
		In view of \eqref{eq:Esqrtn-n0bound}, the conclusion  follows taking $C_2= C_2'$ and $C_1= 
		2+ C_1'$ because $\Xi'\geq 0$.

		\item For any complex number $z$
		\begin{align*}
		\frac{1}{2} \left( |\Ree{z}| + |\Imm{z}|  \right)  \leq |z| \leq |\Ree{z}| + |\Imm{z}|,
		\end{align*}
		from where it follows that, for any $n\in \N$ and $\Xi', t\geq 0$,
		\begin{align}\label{eq:ineqvarphireim}
		\Pr&\left(\sup_{|\xi|\leq \Xi'}n|\varphinhat(\xi) - \varphin(\xi)| \geq 2\E\sup_{|\xi|\leq \Xi'}n|\varphinhat(\xi) - \varphin(\xi)| + t \right) \\
		&\leq \Pr\left(\sup_{|\xi|\leq \Xi'}|n\Ree{\varphinhat(\xi) - \varphin(\xi)}| \geq \E\sup_{|\xi|\leq \Xi'}|n\Ree{\varphinhat(\xi) - \varphin(\xi)}| + t/2 \right) \notag \\
		&\quad+\Pr\left(\sup_{|\xi|\leq \Xi'}|n\Imm{\varphinhat(\xi) - \varphin(\xi)}| \geq \E\sup_{|\xi|\leq \Xi'}|n\Imm{\varphinhat(\xi) - \varphin(\xi)}| + t/2 \right). \notag
		\end{align}
		
		Trivially, 
		\begin{equation*}
		n\,\Ree{\varphinhat(\xi) - \varphin(\xi)}= \sum_{i=1}^n W_i(\xi) \quad \mbox{ and } \quad n\,\Imm{\varphinhat(\xi) - \varphin(\xi)}= \sum_{i=1}^n W_i'(\xi), 
		\end{equation*}
		where		
		\begin{equation*}
		W_i(\xi):=\cos( \twopi \xi Z_i)  - E\cos( \twopi \xi Z_i) \quad \mbox{ and } \quad W_i'(\xi):=\sin( \twopi  \xi Z_i)  - E\sin( \twopi  \xi Z_i).
		\end{equation*}
		For any $\xi\in \R$, $W_i(\xi)$, $i=1, \ldots,n$, are independent and centred,
		and the same holds for any $W_i'(\xi)$. Furthermore,  for any $\xi\in \R$ and $i=1, \ldots,n$,
		\begin{align*}
		\E W_i(\xi)^2 +\E W_i'(\xi)^2 = & \, 		\E   \cos^2\left(\twopi \xi Z_i\right) -  \left(\E \cos\left(\twopi \xi Z_i\right)\right)^2 \\
		&+ \E\sin^2\left(\twopi \xi Z_i\right) -  \left(\E \sin\left(\twopi \xi Z_i\right)\right)^2  \\
		= &\,1- |\varphin(\xi)|^2 = 1-e^{ 2\Dn \left( \Ree{ \FT \nu (\xi)\right) - \lambda}} \\
		\leq &\,\min\{1,2\Dn \left(  \lambda - \Ree{ \FT \nu (\xi)} \right)\} \leq \min\{1,4 \lambdaDn\}, 
		\end{align*}
		where in the first inequality we have used that $|\text{Re}\left( \FT \nu (\xi)\right)| \leq \lambda$ and that $1-\exp(-x) \leq \min\{1,x\}$ for all $x \geq 0$. Additionally, due to the continuity of $\varphin$ and $\varphinhat$, the suprema on the right hand side of \eqref{eq:ineqvarphireim} do not change if they are taken over a dense and countable subset such as $[-\Xi',\Xi']\cap \mathbb{Q}$, where $\mathbb{Q}$ is the set of rational numbers in $\R$. Therefore, we can apply the upper tail of Talagrand's inequality (cf. expression  (3.100) in Theorem 3.3.9 of \cite{GN16}) to each of the probabilities therein with  upper bounds for the envelope and the variance given, respectively, by $1$ and
		\begin{equation*}
		2\E\sup_{|\xi|\leq \Xi'}n|\varphinhat(\xi) - \varphin(\xi)| + 4 \Tlambda,
		\end{equation*}
		and the conclusion follows immediately.

		\item 
		To show the result we take 
		\begin{equation*}
		t= \Tlambda r - 2 \E\sup_{|\xi|\leq \Xi'}n|\varphinhat(\xi) - \varphin(\xi)|. 
		\end{equation*}
		In view of part (a) and the lower bound on $r$, 
		\begin{equation*}
		t \geq \left( \frac{1}{2\sqrt{c}} -2 C \right)  \sqrt{\Tlambda{\log^{1+2\delta}(e+\Xi')} } + \Tlambda r /2,
		\end{equation*}
		and, due to $r \leq 1 $, we also have that $t\leq \Tlambda$. Notice that the term in brackets is nonnegative by  condition $0<c< (4 C)^{-2}$ and that $\log^{1+2\delta}(e+\Xi') \geq \log(e+\Xi')$ due to $\delta> 0$ and $\Xi' \geq 0$. Then, by part (b) and  applying part (a) again, the left hand side in \eqref{eq:concentrationsupr} is bounded above by
		\begin{align*}
		2   \exp&\left(- \frac{\left(  \left( \frac{1}{2\sqrt{c}} -2 C \right)^2 \Tlambda {\log(e+\Xi')}  + (\Tlambda r)^2/4  \right) }{  16 C  \sqrt{\Tlambda{\log^{1+2\delta}(e+\Xi')} }  + 32\Tlambda+ 8\Tlambda/3  }\right) \\
		&\leq 2   \exp\left(- \frac{  4\left( \frac{1}{2\sqrt{c}} -2 C \right)^2  \log(e+\Xi') + \Tlambda r^2   }{ 64 \sqrt{c} \,C   + 416/3  }\right),
		\end{align*}	
		where the last inequality follows by assumption $\log^{1+2\delta}(e+\Xi') \leq c \Tlambda$. The conclusion then follows by condition $c< (4 C)^{-2}$, which is equivalent to $4 \sqrt{c} \,C < 1$, together with assumption $e+\Xi' \geq 2^{\frac{1}{4L(1/(2\sqrt{c}) -2C)^2}}$.

	\end{enumerate}	
	
\end{proof}


\begin{thebibliography}{99}
	
	
	\bibitem{A18}
	Abraham, K. (2018). {Nonparametric Bayesian posterior contraction rates for scalar diffusions with high-frequency data}. \text{Bernoulli}, to appear.
	
	\bibitem{BNMR12}
	(2012). \text{L{\'e}vy processes: theory and applications}. {Barndorff-Nielsen, O. E., Mikosch, T., and Resnick, S. I.} Basel: Birkh{\"a}user.
	
	
	\bibitem{BL15} 
	\text{Bec, M. and Lacour, C.} (2015). Adaptive pointwise estimation for pure jump L{\'e}vy processes. \text{Stat. Inference Stoch. Process.} \textbf{18}, 229--256.
	
	\bibitem{B11} 
	\text{Belomestny, D.} (2011). Statistical inference for time-changed L{\'e}vy processes via composite characteristic function estimation. \text{Ann. Statist.} \textbf{39}(4), 2205--2242.
	
	
	
	\bibitem{BCGCMR15} 
	\text{Belomestny, D., Comte, F., Genon-Catalot, V., Masuda, H. and Rei{\ss}, M.} (2015). \text{L{\'e}vy Matters IV.} Cham: Springer International Publishing.
	
	\bibitem{BR06} 
	\text{Belomestny, D.} and \text{Rei{\ss}, M.} (2006). Spectral calibration for exponential L\'evy models. \text{Finance Stoch.} \textbf{10}(4), 449--474.
	
	\bibitem{BG03} 
	\text{Buchmann, B.} and \text{Gr{\"u}bel, R.} (2003). Decompounding: an estimation problem for Poisson random sums. \text{Ann. Statist.} \textbf{31}(4), 1054--1074.
	
	\bibitem{CDH10} 
	\text{Chen, S. X., Delaigle, A. and Hall, P.} (2010). Nonparametric estimation for a class of L\'evy processes. \text{J. Econometrics}, \textbf{157}(2), 257--271.
	%
	\bibitem{Cho18} 
	\text{Chorowski, J.} (2018). Nonparametric volatility estimation in scalar diffusions: Optimality across observation frequencies. \text{Bernoulli}, \textbf{24}(4A), 2934--2990.
	%
	\bibitem{C01} 
	\text{Chung, K.L.} (2001). \text{A Course in Probability Theory.} Third edition. New York: Academic Press.
	
	
	\bibitem{C17} 
	\text{Coca, A.J.} (2017). Efficient nonparametric inference for discretely observed compound Poisson processes. PhD thesis, University of Cambridge. doi: 10.17863/CAM.8528.
	
	
	\bibitem{C18} 
	\text{Coca, A.J.} (2018). Efficient nonparametric inference for discretely observed compound Poisson processes. \text{Probab. Theory Related Fields}, \textbf{170}(1-2), 475--523.
	
	
	
	\bibitem{CDGC14} 
	\text{Comte, F., Duval, C. and Genon-Catalot, V.} (2014). Nonparametric density estimation in compound Poisson process using convolution power estimators. \text{Metrika}, \textbf{77}(1), 163--183.
	
	
	
	\bibitem{CGC09} 
	\text{Comte, F. and Genon-Catalot, V.} (2009). Nonparametric estimation for pure jump L{\'e}vy processes based on high frequency data. \text{Stochastic Process. Appl.} \textbf{119}(12), 4088--4123.
	
	
	
	
	
	\bibitem{CGC10} 
	\text{Comte, F. and Genon-Catalot, V.} (2010). Nonparametric adaptive estimation for pure jump L{\'e}vy processes. \text{Ann. Inst. Henri Poincar{\'e} Probab. Stat.} \textbf{46}(3), 595--617.
	
	
	\bibitem{CGC11} 
	\text{Comte, F. and Genon-Catalot, V.} (2011). Estimation for L{\'e}vy processes from high frequency data within a long time interval. \text{Ann. Statist.} \textbf{39}(2), 803--837.
	
	
	
	\bibitem{CDGCK15} 
	\text{Comte, F., Duval, C., Genon-Catalot, V. and Kappus, J.} (2015). Estimation of the jump size density in a mixed compound Poisson process. \text{Scand. J. Stat.} \textbf{42}(4), 1023--1044.
	
	\bibitem{CT04} 
	\text{Cont, R. and Tankov, P.} (2004). \text{Financial Modelling with Jump Processes.} New York: Chapman and Hall.
	
	\bibitem{DJKP96} 
	\text{Donoho, D. L., Johnstone, I. M., Kerkyacharian, G. and Picard, D.} (1996). Density estimation by wavelet thresholding. \text{Ann. Statist.} \textbf{24}(2), 508--539.
	
	
	\bibitem{D13} 
	\text{Duval, C.} (2013). Density estimation for compound Poisson processes from discrete data. \text{Stochastic Process. Appl.} \textbf{123}(11), 3963--3986.
	
	\bibitem{D14} 
	\text{Duval, C.} (2014). When is it no longer possible to estimate a compound Poisson process? \text{Electron. J. Stat.} \textbf{8}(1), 274--301.
	
	
	\bibitem{DK18} 
	\text{Duval, C. and Kappus, J.} (2018). An adaptive procedure for Fourier estimators: illustration to deconvolution and decompounding. \text{arXiv preprint, arXiv:1802.05104}.
	
	
	\bibitem{DM17} 
	\text{Duval, C. and Mariucci, E.} (2017). Compound Poisson approximation to estimate the L{\'e}vy density. \text{arXiv preprint, arXiv:1702.08787}.
	
	
	%
	\bibitem{VEGS07} 
	\text{van Es, B.,  Gugushvili, S. and Spreij, P.} (2007). A kernel type nonparametric density estimator for decompounding. \text{Bernoulli}, \textbf{13}(3), 672--694.
	
	
	\bibitem{FL08} 
	\text{Figueroa-L{\'o}pez, J.E.} (2008). Small-time moment asymptotics for L{\'e}vy processes. \text{Statist. Probab. Lett.} \textbf{78}(18), 3355--3365. 
	
	
	\bibitem{FL09} 
	\text{Figueroa-L{\'o}pez, J.E.} (2009). Nonparametric estimation for L{\'e}vy models based on discrete-sampling. \text{Optimality: The Third Erich L. Lehmann Symposium,} \text{IMS Lecture Notes-Monograph Series,} \textbf{57}, 117--146. 
	
	\bibitem{FL11} 
	\text{Figueroa-L{\'o}pez, J.E.} (2011). Sieve-based confidence intervals and bands for L{\'e}vy densities. \text{Bernoulli}, \textbf{17}(2), 643--670.
	
	\bibitem{FLH09} 
	\text{Figueroa-L{\'o}pez, J.E. and Christian Houdr{\'e}} (2009). Small-time expansions for the transition distributions of L{\'e}vy processes. \text{Stochastic Process. Appl.} \textbf{119}(11), 3963--3986.
	
	\bibitem{F99} 
	\text{Folland, G.B.} (1999). \text{Real Analysis. Modern Techniques and Their Applications}. Second Edition. New York: Wiley.
	
	\bibitem{GN08} 
	\text{Gin\'e, E. and Nickl, R.} (2008). Adaptation on the space of finite signed measures. \text{Math. Methods Statist.} \textbf{17}(2), 113--122.
	
	\bibitem{GN09} 
	\text{Gin\'e, E. and Nickl, R.} (2009). An exponential inequality for the distribution function of the kernel density estimator, with applications to adaptive estimation. \text{Probab. Theory Related Fields}, \textbf{143}(3-4), 569--596.
	
	\bibitem{GN10} 
	\text{Gin\'e, E. and Nickl, R.} (2010). Adaptive estimation of a distribution function and its density in sup-norm loss by wavelet and spline projections. \text{Bernoulli}, \textbf{16}(4), 1137--1163.
	
	
	
	\bibitem{GN11} 
	\text{Gin\'e, E. and Nickl, R.} (2011). Rates of contraction for posterior distributions in $L^r$-metrics, $1\leq r\leq \infty$. \text{Ann. Statist.} \textbf{39}(6), 2883--2911.
	
	
	
	\bibitem{GN16} 
	\text{Gin\'e, E. and Nickl, R.} (2016). Mathematical Foundations of Infinite-Dimensional Statistical Models. Cambridge: Cambridge University Press.
	
	
	\bibitem{G09} 
	\text{Gugushvili, S.} (2009). Nonparametric estimation of the characteristic triplet of a discretely observed
	L\'evy process. \text{J. Nonparametr. Stat.} \textbf{21}(3), 321--343.
	
	\bibitem{G12} 
	\text{Gugushvili, S.} (2012). Nonparametric inference for discretely sampled L\'evy processes. \text{Ann. Inst. Henri Poincar{\'e} Probab. Stat.} \textbf{48}(1), 282--307.
	
	\bibitem{GvdMS15}
	\text{Gugushvili, S.,  van der Meulen, F. and Spreij, P.} (2015). Nonparametric Bayesian inference for multidimensional compound Poisson processes. \text{Mod. Stoch. Theory Appl.} \textbf{2}(1), 1--15.
	
	\bibitem{GvdMS18}
	\text{Gugushvili, S.,  van der Meulen, F. and Spreij, P.} (2018). A non-parametric Bayesian approach to decompounding from high frequency data. \text{Stat. Inference Stoch. Process.}  \textbf{21}(1), 53--79.
	
	
	\bibitem{KK17}
	\text{Kato, K. and Kurisu, D.} (2017). Bootstrap confidence bands for spectral estimation of L{\'e}vy densities under high-frequency observations. \text{arXiv preprint, arXiv:1709.07752}.
	
	
	\bibitem{K14}
	\text{Kappus, J.} (2014). Adaptive nonparametric estimation for L{\'e}vy processes observed at low frequency.  \text{Stochastic Process. Appl.} \textbf{124}(1), 730--758.
	
	\bibitem{K18}
	\text{Kappus, J.} (2018). Nonparametric estimation for irregularly sampled L{\'e}vy processes.  \text{Stat. Inference Stoch. Process.} \textbf{21}(1), 141--167.
	
	\bibitem{KR10}
	\text{Kappus, J. and Rei{\ss}, M.} (2010). Estimation of the characteristics of a L{\'e}vy process observed at arbitrary frequency.  \text{Stat. Neerl.} \textbf{64}(3), 314--328.
	
	\bibitem{KNP12}
	\text{Kerkyacharian, G., Nickl, R. and Picard, D.} (2012). Concentration inequalities and confidence bands for needlet density estimators on compact homogeneous manifolds.  \text{Probab. Theory Related Fields}, \textbf{153}(1-2), 363--404.
	
	\bibitem{KP16}
	\text{Konakov, V. and Panov V.} (2016). Sup-norm convergence rates for L{\'e}vy density estimation.  \text{Extremes}, \textbf{19}, 371--403.
	
	\bibitem{L91}
	\text{Lepski\u{i}, O. V.} (1991). Asymptotically Minimax Adaptive Estimation. I. Upper Bounds. Optimally Adaptive Estimates.  \text{Theory Probab. Appl.} \textbf{36}(4), 682--697.
	
	\bibitem{M89} 
	\text{Mallat, S. G.} (1989). A theory for multiresolution signal decomposition: the wavelet representation. \text{IEEE transactions on pattern analysis and machine intelligence}, \textbf{11}(7), 674--693.
	
	\bibitem{M86}
	\text{Meyer, Y.} (1986). Principe d'incertitude, bases Hilbertiennes et algebres d'op{\'e}rateurs.  \text{S{\'e}minaire Bourbaki}, 662.
	
	\bibitem{MP04} 
	\text{McMurry, T. L. and Politis, D. N.} (2004). Nonparametric regression with infinite order flat-top kernels. {\text Journal of Nonparametric Statistics}, {\bf 16}(3-4), 549--562.
	
	\bibitem{NR09} 
	\text{Neumann, M.H.} and \text{Rei\ss, M.} (2009). Nonparametric estimation for L\'evy processes from low-frequency observations. {\text Bernoulli}, {\bf 15}(1), 223--248.
	%
	\bibitem{NR12} 
	\text{Nickl, R.} and \text{Rei\ss, M.} (2012). A Donsker theorem for L\'evy measures. {\text J. Funct. Anal.} {\bf 263}, 3306--3332.
	
	\bibitem{NRST16} 
	\text{Nickl, R., Rei\ss, M., S\"ohl, J.} and \text{Trabs, M.}  (2016). High-frequency Donsker theorems for L\'evy measures. \text{Probab. Theory Related Fields} \textbf{164}(1), 61--108.
	
	
	
	\bibitem{NS17}
	\text{Nickl, R. and S{\"o}hl, J.} (2017). Bernstein-von Mises theorems for statistical inverse problems II: compound Poisson processes. \text{arXiv preprint, arXiv:1709.07752}.
	
	\bibitem{S99} 
	\text{Sato, K.-I.} (1999). \text{L\'evy processes and Infinitely Divisible Distributions.} Cambridge: Cambridge University Press.
	
	
	\bibitem{T15} 
	\text{Trabs, M.} (2015). Quantile estimation for L\'evy measures. \text{Stochastic Process. Appl.} \textbf{125}(9), 3484--3521.
	
	
\end{thebibliography}
\end{document}